\documentclass[12pt]{amsart}
\usepackage[utf8]{inputenc}
\usepackage{enumerate,enumitem,amsmath,amssymb,mathrsfs,stmaryrd}
\usepackage{latexsym,epsf,graphicx,comment,appendix}

\newfont{\bb}{msbm10 at 12pt}
\newfont{\tbb}{msbm10 at 8pt}

\topmargin 0cm 
\textheight = 40
\baselineskip 
\textwidth 16cm 
\oddsidemargin 0.3cm
\evensidemargin 0.4cm

\usepackage {amsmath}
\usepackage {amsthm}
\usepackage{times}
\usepackage{amscd}
\usepackage{epsf}

\numberwithin{equation} {section}

\begin{document}
\mbox{}\vspace{0.2cm}\mbox{}

\providecommand{\keywords}[1]
{
  \small	
  \textbf{\textit{Keywords---}} #1
}

\theoremstyle{plain}\newtheorem{lemma}{Lemma}[section]
\theoremstyle{plain}\newtheorem{proposition}{Proposition}[section]
\theoremstyle{plain}\newtheorem{theorem}{Theorem}[section]

\theoremstyle{plain}\newtheorem*{theorem*}{Theorem}
\theoremstyle{plain}\newtheorem*{main theorem}{Main Theorem} 
\theoremstyle{plain}\newtheorem*{lemma*}{Lemma}
\theoremstyle{plain}\newtheorem*{claim}{Claim}

\theoremstyle{plain}\newtheorem{example}{Example}[section]
\theoremstyle{plain}\newtheorem{remark}{Remark}[section]
\theoremstyle{plain}\newtheorem{corollary}{Corollary}[section]
\theoremstyle{plain}\newtheorem*{corollary-A}{Corollary}
\theoremstyle{plain}\newtheorem{definition}{Definition}[section]
\theoremstyle{plain}\newtheorem{acknowledge}{Acknowledgment}
\theoremstyle{plain}\newtheorem{conjecture}{Conjecture}

\begin{center}
\rule{15cm}{1.5pt} \vspace{.4cm}

{\bf\Large Linear potentials and applications in conformal geometry} 
\vskip .3cm

Shiguang Ma$\mbox{}^\dag$ and Jie Qing$\mbox{}^\ddag$
\vspace{0.3cm} 
\rule{15cm}{1.5pt}
\end{center}


\title{}

\begin{abstract} In this paper we derive estimates for linear potentials that hold away from thin subsets. And, inspired by the celebrated 
work of Huber (cf. \cite{Cv, Hu}), we verify that, for a subset that is thin at a point, there 
is always a geodesic that reaches to the point and avoids the thin subset in general dimensions. As applications of these estimates on 
linear potentials, we consider the scalar curvature equations and slightly improve the results of Schoen-Yau \cite{SY, SYb} and Carron
\cite{Ca} on the Hausdorff dimensions of singular sets which represent the ends of complete conformal metrics on domains in manifolds of 
dimensions greater than 3. We also study $Q$-curvature equations in dimensions greater than 4 and obtain stronger
results on the Hausdorff dimensions of the singular sets (cf \cite{CHY}). More interestingly, 
our approach based on potential theory yields 
a significantly stronger finiteness theorem on the singular sets for $Q$-curvature equations in dimension 4 (cf. \cite{CQY, CH}), which
is a remarkable analogue of Huber's theorem \cite{Cv, Hu}.    
\end{abstract}

\subjclass{53C21; 31B35; 53A30; 31B05}
\keywords {Riesz potentials, Log potentials, outer capacities, $\alpha$-thinness, scalar curvature equations, $Q$-curvature equations, Hausdorff
diemsions}

\maketitle

\section{Introduction}

In this paper we employ the linear potential theory to study scalar curvature equations and $Q$-curvature equations in conformal 
geometry. This is a continuation of our recent work on $n$-superharmonic functions (cf, 
\cite{BMQ-s, BMQ-r, MQ-a, MQ-g}) inspired by the Huber's theorem and related work in superharmonic functions in dimension 2 
(cf. \cite{Cv, Hu, AH, HK}). 

Linear potential theory has always been a major subject in analysis and partial differential equations. For this paper, 
we refer readers, for instance, to \cite{Mi, AH96, AG}, for good introductions on potential theory. For clarity, the definitions of Riesz potentials and Log potentials are given in Section \ref{Sec:potential-capacity}. For our purpose, the kernel functions are not chosen
for discussions on the boundary behavior of potentials and we focus on the outer capacity and thin subsets (please see Definition 
\ref{Def:outer-capacity-euclidean} and Definition \ref{Def:alpha-thin-euclidean} in Section \ref{Sec:potential-capacity}). 
We also push to make some of the potential theory work on Riemannian manifolds. The interesting result on Riesz potentials we obtain is 

\begin{theorem}\label{Thm:intr-main-thm-1}
Suppose that $(M^n, g)$ is a complete Riemannian manifold and $\mu$ is a finite nonnegative Radon measure on a bounded 
domain $G \subset M^n$. Let $S$ be a compact subset in $G$ such that its Hausdorff dimension is greater than $d$, where 
$d < n-\alpha$ and $\alpha \in (1, n)$. Then there is a point $p\in S$ and a subset $E$ that is $\alpha$-thin at $p$ such that 
\begin{equation}\label{Equ:main-estimate-riesz-intr}
\int_{G} \frac 1{d(x, y)^{n-\alpha}} d\mu \leq \frac C{d(x, p)^{n-\alpha - d}}
\end{equation}
for some constant $C$ and all $x\in B_\delta (p) \setminus E$ for some small $\delta > 0$.
\end{theorem}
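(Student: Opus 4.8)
The plan is to single out a favourable point $p\in S$ by a density argument, and then to produce $E$ explicitly as a dyadic union of superlevel sets of truncated Riesz potentials, whose outer capacities are controlled by a weak‑type estimate. First fix an auxiliary exponent $d_1$ with $d<d_1<\min\{\,n-\alpha,\ \dim_{\mathcal H}S\,\}$, which is possible because $d<n-\alpha$ and $d<\dim_{\mathcal H}S$. For a finite Radon measure $\mu$ the set of points $q$ at which the upper $d_1$‑density $\limsup_{r\to0}\mu(B_r(q))/r^{d_1}$ is infinite has vanishing $d_1$‑dimensional Hausdorff measure (the usual comparison of $\mu$ with $\mathcal H^{d_1}$; this is a local statement, so it transfers to $(M^n,g)$ by passing to coordinate charts near the compact set $S$). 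Since $d_1<\dim_{\mathcal H}S$ forces $\mathcal H^{d_1}(S)=\infty$, there exist $p\in S$, $\rho_0\in(0,1)$ and $\Lambda>0$ with
\[
\mu(B_r(p))\le \Lambda\, r^{d_1}\qquad\text{for all }0<r\le\rho_0 .
\]

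Now take $x$ with $\delta:=d(x,p)<\rho_0/10$ and split the left side of \eqref{Equ:main-estimate-riesz-intr} into the integrals over $G\setminus B_{\rho_0}(p)$, over $B_{\rho_0}(p)\setminus B_{4\delta}(p)$, and over $B_{4\delta}(p)$. On $G\setminus B_{\rho_0}(p)$ one has $d(x,y)\ge\rho_0/2$, so that piece is at most the constant $(\rho_0/2)^{\alpha-n}\mu(G)$. On $B_{\rho_0}(p)\setminus B_{4\delta}(p)$ one has $d(x,y)\ge\tfrac12 d(p,y)$; decomposing this region into dyadic annuli centred at $p$ and using the density bound, together with $d_1+\alpha-n<0$, shows this piece is at most $C\Lambda\,\delta^{-(n-\alpha-d_1)}$. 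Since $n-\alpha-d>0$, $\delta<1$ and $d_1>d$, both of these are $\le C\delta^{-(n-\alpha-d)}$ once $\delta$ is small, so it remains to bound $\int_{B_{4\delta}(p)}d(x,y)^{\alpha-n}\,d\mu(y)=I_\alpha\!\big(\mu|_{B_{4\delta}(p)}\big)(x)$ off a set that is $\alpha$‑thin at $p$.

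For this, set $r_j:=2^{-j}\rho_0$ and $A_j:=\set{x:\ 2^{-j-1}\rho_0\le d(x,p)<2^{-j}\rho_0}$ for $j\ge j_0$, where $j_0$ is chosen so that $A_j\subset B_{\rho_0/10}(p)$ for $j\ge j_0$, and define
\[
E_j:=\set{\,x\in A_j:\ I_\alpha\!\big(\mu|_{B_{4r_j}(p)}\big)(x)>r_j^{-(n-\alpha-d)}\,},\qquad E:=\bigcup_{j\ge j_0}E_j .
\]
If $x\in A_j\setminus E_j$ then $B_{4\delta}(p)\subset B_{4r_j}(p)$ and $\delta\le r_j$, so $I_\alpha(\mu|_{B_{4\delta}(p)})(x)\le r_j^{-(n-\alpha-d)}\le\delta^{-(n-\alpha-d)}$; combined with the previous paragraph this yields \eqref{Equ:main-estimate-riesz-intr} on $B_{\delta_0}(p)\setminus E$ with $\delta=\delta_0:=2^{-j_0}\rho_0$. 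It remains to check thinness: since $\mu(B_{4r_j}(p))\le\Lambda(4r_j)^{d_1}$, the weak‑type capacity estimate for Riesz potentials of measures (from the material of Section \ref{Sec:potential-capacity}) gives
\[
C_\alpha(E_j)\ \le\ C\, r_j^{\,n-\alpha-d}\,\mu(B_{4r_j}(p))\ \le\ C\,\Lambda\, r_j^{\,n-\alpha+(d_1-d)} ,
\]
so that $\sum_{j\ge j_0} r_j^{-(n-\alpha)}\,C_\alpha(E_j)\le C\Lambda\sum_{j}2^{-j(d_1-d)}<\infty$. By Definition \ref{Def:alpha-thin-euclidean} (Wiener‑type criterion for $\alpha$‑thinness), $E$ is $\alpha$‑thin at $p$, which completes the proof.

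The crux is the interplay between the choice of $p$ and the auxiliary exponent $d_1$: the hypothesis $\dim_{\mathcal H}S>d$ is used exactly to guarantee a point of $S$ at which $\mu$ has finite upper $d_1$‑density, and the \emph{strict} inequality $d_1>d$ is what converts the otherwise borderline bound $C_\alpha(E_j)\approx r_j^{\,n-\alpha}$ — which by itself would fail to give thinness — into the summable estimate above. A secondary obstacle is infrastructural: one needs the density comparison theorem and the capacitary weak‑type and Wiener machinery available on a Riemannian manifold rather than on $\mathbb R^n$, which is the role of the preparatory results of Section \ref{Sec:potential-capacity}.
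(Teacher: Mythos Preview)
Your argument is essentially the paper's own proof: the auxiliary exponent $d_1$ plays the role of the paper's $d+\epsilon$, the selection of $p$ via the density lemma is the same, and the three--piece dyadic decomposition together with the weak--type capacity bound on the superlevel sets $E_j$ mirrors the paper's treatment of the terms I, II, III. One small point to tighten: in the paper's Definition~\ref{Def:alpha-thin-euclidean} the relevant capacity is $C^\alpha(E\cap\omega_i^\delta,\Omega_i^\delta)$ with $\Omega_i^\delta$ an \emph{annulus}, whereas your testing measure $\mu|_{B_{4r_j}(p)}$ is supported in a ball and so does not directly witness that capacity; the fix is exactly the paper's---observe that for $x\in A_j$ the contribution of the inner ball $B_{r_j/4}(p)$ is already $\le C\Lambda r_j^{-(n-\alpha-d_1)}$ by the density bound, so you may replace $\mu|_{B_{4r_j}(p)}$ by its restriction to the annulus $\Omega_j^\delta$ when defining $E_j$, after which the weak--type estimate applies verbatim.
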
  

The proof of Theorem \ref{Thm:intr-main-thm-1} 
uses a general decomposition result \cite[Proposition 1.4]{Kp} and multi-scale analysis.  We also give a proof of a slight extension of \cite[Theorem 6.3]{Mi} for Log potentials on manifolds, which is closely related to \cite{Cv, Hu, AH, MQ-a, MQ-g} for us. 
What makes these estimates useful is the following key observation about thin subsets in general dimensions (cf. \cite{Cv, Hu, AH,
MQ-a, MQ-g}).

\begin{theorem}\label{Thm:intr-main-thm-2}
Let $E$ be a subset in the Euclidean space $R^n$ and $p\in R^n$. Suppose that $E$ is $\alpha$-thin at the 
point $p$ for $\alpha \in (1, n]$. Then there is always a ray from $p$ that avoids $E$ at least within some small ball at $p$.
\end{theorem}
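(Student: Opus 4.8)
\emph{Strategy.} The plan is to run a multi-scale (dyadic) analysis at $p$, reduce the existence of a good ray to a summability statement about ``blocked directions'', and then obtain that summability from a shadow--capacity inequality in which the hypothesis $\alpha>1$ is exactly what is needed. Write $r_j=2^{-j}$, let $A_j=\overline{B_{r_j}(p)}\setminus B_{r_{j+1}}(p)$ be the closed dyadic annuli shrinking to $p$, and for each $j$ set
\[
\Theta_j=\bigl\{\theta\in\mathbb{S}^{n-1}:\ p+t\theta\in E\ \text{for some}\ t\in[r_{j+1},r_j]\bigr\},
\]
the directions in which $E$ blocks the annulus $A_j$. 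If a direction $\theta_0$ lies in only finitely many $\Theta_j$, say $\theta_0\notin\Theta_j$ for all $j\ge k$, then $p+t\theta_0\notin E$ for all $t\in\bigcup_{j\ge k}[r_{j+1},r_j]=(0,r_k]$, so the ray from $p$ in direction $\theta_0$ avoids $E$ within $B_{r_k}(p)$. Hence it suffices to prove $\sum_j\sigma(\Theta_j)<\infty$, where $\sigma$ is surface measure on $\mathbb{S}^{n-1}$: Borel--Cantelli then gives $\sigma(\limsup_j\Theta_j)=0$, so $\mathbb{S}^{n-1}\setminus\limsup_j\Theta_j$ is nonempty and any $\theta_0$ in it works.

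\emph{The key estimate, and the main obstacle.} The heart of the matter — and where I expect the real work to be — is the inequality
\[
\sigma(\Theta_j)\ \le\ C\,r_j^{-(n-\alpha)}\,\mathrm{Cap}_\alpha\bigl(E\cap A_j\bigr),\qquad C=C(n,\alpha),
\]
with $\mathrm{Cap}_\alpha$ the Riesz $\alpha$-capacity of Definition \ref{Def:outer-capacity-euclidean}. First I would rescale $E\cap A_j$ to the fixed annulus $A=\{1/2\le|y|\le1\}$ via $\tilde E_j=r_j^{-1}(E\cap A_j-p)$; by the homogeneity $\mathrm{Cap}_\alpha(\lambda K)=\lambda^{n-\alpha}\mathrm{Cap}_\alpha(K)$ the claim becomes $\sigma(\Theta_j)\le C\,\mathrm{Cap}_\alpha(\tilde E_j)$, where $\tilde E_j\subset A$ still surjects onto $\Theta_j$ under the radial projection $y\mapsto y/|y|$. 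Choosing (measurably) for each $\theta\in\Theta_j$ a point $y(\theta)\in\tilde E_j$ on the corresponding ray and letting $\mu_j$ be the push-forward of $\sigma$ restricted to $\Theta_j$ under $\theta\mapsto y(\theta)$, one gets a measure carried by $\tilde E_j$ with $\mu_j(\mathbb{R}^n)=\sigma(\Theta_j)$. A short computation gives $|z-y(\theta)|\ge\tfrac18|\omega-\theta|$ for $z\neq0$ (with $\omega=z/|z|$) and $|z-y(\theta)|\ge\tfrac12$ for $z=0$, whence
\[
\int_{\mathbb{R}^n}\frac{d\mu_j(y)}{|z-y|^{\,n-\alpha}}\ \le\ 8^{\,n-\alpha}\int_{\mathbb{S}^{n-1}}\frac{d\sigma(\theta)}{|\omega-\theta|^{\,n-\alpha}}\ =:\ M(n,\alpha)
\]
uniformly in $z$, and this spherical integral is finite \emph{precisely because} $\int_0^\pi\beta^{\,\alpha-2}\,d\beta<\infty$, i.e. $\alpha>1$ (here is where the hypothesis is used in an essential way). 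Thus $\mu_j/M$ is admissible for $\mathrm{Cap}_\alpha(\tilde E_j)$ up to the fixed normalization of Definition \ref{Def:outer-capacity-euclidean}, giving $\mathrm{Cap}_\alpha(\tilde E_j)\ge\sigma(\Theta_j)/M$, which is the claim.

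\emph{Putting it together, and remaining points.} By Definition \ref{Def:alpha-thin-euclidean} — or, after passing from the integral form to the dyadic form using monotonicity and countable subadditivity of capacity — the $\alpha$-thinness of $E$ at $p$ says exactly that $\sum_j r_j^{-(n-\alpha)}\mathrm{Cap}_\alpha(E\cap A_j)<\infty$. Combined with the key estimate this yields $\sum_j\sigma(\Theta_j)\le C\sum_j r_j^{-(n-\alpha)}\mathrm{Cap}_\alpha(E\cap A_j)<\infty$, and the reduction of the first paragraph finishes the proof. Two technical points will need care: the measurable selection $\theta\mapsto y(\theta)$, which is harmless once $E$ is taken Borel (as one may assume); and the endpoint $\alpha=n$, where the Riesz kernel degenerates and one should rerun the same scheme with the logarithmic potential, using $\int_0^\pi\log(1/\beta)\,\beta^{\,n-2}\,d\beta<\infty$ in place of the Riesz estimate, in line with the Log-potential results recalled above (cf. \cite{Cv,Hu,AH}).
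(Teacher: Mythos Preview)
Your proposal is correct but takes a genuinely different route from the paper's proof. After the common step of rescaling each dyadic annulus to unit scale, the arguments diverge. The paper applies the radial projection $P(v)=v/|v|$ (for $|v|\ge1$), observes that $P$ is a contraction, and invokes the contractive property of $C^\alpha$ (Lemma~\ref{Lem:contractive-property}) to get $C^\alpha\!\bigl(P(S_i(E)\cap\omega_0^1),\Omega_0^1\bigr)\le C^\alpha\!\bigl(S_i(E)\cap\omega_0^1,\Omega_0^1\bigr)$; by countable subadditivity the union $\bigcup_{i\ge k}P(S_i(E)\cap\omega_0^1)$ has capacity bounded by the tail of the thinness series, and since $C^\alpha(S^{n-1},B_2)>0$ (Lemma~\ref{Lem:sphere-capacity-euclidean}, where $\alpha>1$ enters through $\int_0^3 r^{\alpha-2}\,dr<\infty$), this union cannot exhaust $S^{n-1}$. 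No surface measure, no Borel--Cantelli, no selection is needed. Your argument instead bounds $\sigma(\Theta_j)$ from above by capacity via the dual (Fubini) inequality: if $\nu$ is supported on $E$ with $R^{\alpha,\Omega}_\nu\le M$ on $\Omega$, then $\nu(E)\le\int_E R^{\alpha,\Omega}_\mu\,d\nu=\int_\Omega R^{\alpha,\Omega}_\nu\,d\mu\le M\mu(\Omega)$ for any admissible $\mu$, so $\nu(E)\le M\,C^\alpha(E,\Omega)$; you then apply Borel--Cantelli to the surface measures $\sigma(\Theta_j)$.

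What each approach buys: the paper's argument is leaner and entirely self-contained within the three lemmas it states (scaling, contraction, $C^\alpha(S^{n-1},B_2)>0$), avoiding both the measurable selection $\theta\mapsto y(\theta)$ and the dual characterization of capacity. Your argument costs a bit more machinery but yields a strictly stronger conclusion, namely that $\sigma\bigl(\limsup_j\Theta_j\bigr)=0$, so almost every direction works, not merely one. Both proofs locate the role of the hypothesis $\alpha>1$ in the same place: the finiteness of the spherical Riesz integral $\int_{S^{n-1}}|\omega-\theta|^{-(n-\alpha)}\,d\sigma(\theta)$ (equivalently $\int_0^3 r^{\alpha-2}\,dr$). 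For $\alpha=n$ your sketch is fine, and in fact you do not even need the extra factor of $i$ in the $n$-thinness series, since $\sum_i a_i\le\sum_i i\,a_i$.
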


The proof of Theorem \ref{Thm:intr-main-thm-2} uses only the scaling property (Lemma \ref{Lem:scaling-euclidean}),
the contractive property (Lemma \ref{Lem:contractive-property}), and the calculation of $C^\alpha(S^{n-1}, B_2(0))$ 
(Lemma \ref{Lem:sphere-capacity-euclidean}) for the outer capacity $C^\alpha(E, \Omega)$ defined in 
Definition \ref{Def:outer-capacity-euclidean} and $\alpha$-thinness in Definition \ref{Def:alpha-thin-euclidean}.

In conformal geometry, the scalar curvature equation describes the conformal transformation of the scalar curvature
\begin{equation}\label{Equ:scalar-curvature-equation-intr}
- \frac {4(n-1)}{n-2}\Delta [\bar g] u + R[\bar g] u = R[u^\frac 4{n-2} \bar g]u^\frac {n+2}{n-2}.
\end{equation}
There have been many works on singular solutions after the seminal paper \cite{SY},
where the singularities represent the ends of complete conformal metrics on domains in Riemannian manifolds (cf. 
for instance, \cite[Chapter VI]{SYb} \cite{Ca} and  \cite{Sc, MS, MP}).

\begin{theorem}\label{Thm:intr-main-thm-3}
Let $(M^n, \bar g)$ be a complete Riemannian manifold and $S$ be a 
compact subset in $M^n$. And let $D$ be a bounded open neighborhood of $S$.  
Suppose that $g = u^\frac 4{n-2} \bar g$ is a conformal metric on $D\setminus S$ and is geodesically complete near $S$. 
 Then the Hausdorff dimension
\begin{equation}\label{Equ:hausdorff-dimension-intr} 
\text{dim}_\mathscr{H} (S) \leq \frac {n-2}2
\end{equation}
provided $R^-[g] \in L^\frac {2n}{n+2} (D\setminus S, g) \bigcap L^p(D\setminus S, g)$
for some $p > n/2$, where $R^-[g]$ is the negative part of the scalar curvature of the metric $g$. 
Consequently, \eqref{Equ:hausdorff-dimension-intr} holds when the scalar curvature $R[g]$ of the 
conformal metric $g$ is nonnegative.
\end{theorem}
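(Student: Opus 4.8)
The plan is to rewrite the scalar curvature equation \eqref{Equ:scalar-curvature-equation-intr} as a differential inequality for a $n$-type potential and then convert a dimension bound on $S$ into an estimate that contradicts geodesic completeness unless $\dim_\mathscr{H}(S) \le (n-2)/2$. Concretely, I would first argue by contradiction: suppose $\dim_\mathscr{H}(S) > (n-2)/2$. Since $\frac{n-2}{2} = n - \alpha$ with $\alpha = \frac{n+2}{2} \in (1,n)$ for $n \ge 3$, Theorem \ref{Thm:intr-main-thm-1} will apply with $d$ chosen so that $(n-2)/2 < d < n - \alpha$... wait — one must be careful: here $d < n-\alpha$ forces $d < (n-2)/2$, so instead I pick $d$ with $\dim_\mathscr{H}(S) > d > (n-2)/2$ and I need the complementary Riesz exponent. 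The correct reading: the curvature term $R^-[g]$ in the metric $g$ produces, via the Green's function for the conformal Laplacian of $\bar g$, a Riesz potential $\int_D d(x,y)^{-(n-2)} \, d\mu(y)$ where $d\mu$ is essentially $R^-[g]\, u^{(n+2)/(n-2)}\, dV_{\bar g}$, and the integrability hypotheses $R^-[g] \in L^{2n/(n+2)} \cap L^p$ with $p > n/2$ (all with respect to $g$) are exactly what make $\mu$ a finite Radon measure and control the local behavior of this potential. I would establish that $u$ is comparable to a sum of such a Riesz potential plus a harmonic-type remainder near $S$.

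The second step is the local representation: near a point $p \in S$, write $u = G * \mu + h$ on $D \setminus S$ where $h$ is bounded and $G*\mu(x) = \int_D d(x,y)^{-(n-2)}\,d\mu(y)$, using the Green's function for $-\frac{4(n-1)}{n-2}\Delta[\bar g] + R[\bar g]$ on a small ball (after a conformal change to arrange $R[\bar g] \ge 0$ or handling the zeroth-order term perturbatively). Because $\mu$ is finite and $\dim_\mathscr{H}(S) > d$, I would want to apply Theorem \ref{Thm:intr-main-thm-1} with $\alpha = 2$ — but $\alpha \in (1,n)$ and $d < n - 2$ is required, which holds precisely when $d < (n-2)$, automatically true since we only need $d$ slightly above $(n-2)/2 < n-2$ for $n \ge 3$. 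Thus Theorem \ref{Thm:intr-main-thm-1} yields a point $p \in S$, an exceptional set $E$ that is $2$-thin at $p$, and the estimate $G*\mu(x) \le C\, d(x,p)^{-(n-2-d)}$ for $x \in B_\delta(p) \setminus E$. Combined with the bounded remainder, this gives $u(x) \le C\, d(x,p)^{-(n-2-d)}$ off the thin set $E$.

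The third and decisive step invokes Theorem \ref{Thm:intr-main-thm-2}: since $E$ is $2$-thin at $p$ (and $2 \in (1,n]$), there is a ray $\gamma$ emanating from $p$ that avoids $E$ within a small ball $B_\delta(p)$. Along this ray the bound $u(\gamma(s)) \le C\, s^{-(n-2-d)}$ holds, where $s$ is the $\bar g$-distance from $p$. The $g$-length of the portion of $\gamma$ inside $B_\delta(p)$ is then $\int_0^\delta u(\gamma(s))^{2/(n-2)}\,ds \le C \int_0^\delta s^{-2(n-2-d)/(n-2)}\,ds$, which is \emph{finite} as soon as $\frac{2(n-2-d)}{n-2} < 1$, i.e. $d > \frac{n-2}{2}$. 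But then $\gamma$ is a curve of finite $g$-length running into the point $p \in S$, contradicting the assumed geodesic completeness of $g$ near $S$. Hence no such $d$ exists, forcing $\dim_\mathscr{H}(S) \le \frac{n-2}{2}$; the final sentence about $R[g] \ge 0$ follows since then $R^-[g] \equiv 0$ and all integrability hypotheses are vacuous.

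The main obstacle I anticipate is the second step: justifying the clean representation $u = G*\mu + (\text{bounded})$ near $S$ with $\mu$ a genuine finite Radon measure. This requires (a) showing $u$ is (super)harmonic-type so that the Riesz decomposition / Green representation applies on $D \setminus S$ up to $S$, controlling the measure $\mu$ near $S$ using the $L^{2n/(n+2)}(g)$ and $L^p(g)$, $p>n/2$ bounds on $R^-[g]$ — the exponent $2n/(n+2)$ is the dual Sobolev exponent and is exactly what keeps $\int_D u^{(n+2)/(n-2)} R^-[g]\, dV_{\bar g}$ finite via Sobolev embedding, while $p > n/2$ gives the local $L^\infty$ control away from $S$; and (b) absorbing the zeroth-order term $R[\bar g]u$ and the positive scalar curvature contribution, which should be routine by a conformal change making $\bar g$ scalar-flat or scalar-positive on the small ball, or by treating $R[\bar g]$ as a lower-order perturbation of the Laplacian's Green function. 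Everything else is then a matter of bookkeeping with the two potential-theoretic inputs already in hand.
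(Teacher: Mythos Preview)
Your overall strategy matches the paper's proof: obtain a Green-function representation bounding $u$ by a Riesz potential of a finite Radon measure, apply Theorem~\ref{Thm:intr-main-thm-1} with $\alpha=2$, then Theorem~\ref{Thm:intr-main-thm-2} to find a ray along which the $g$-length integral $\int_0^\delta s^{-2(n-2-d)/(n-2)}\,ds$ converges whenever $d>(n-2)/2$, contradicting completeness. The arithmetic in your third step is exactly right.

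However, your second step contains two concrete errors beyond what you flag as ``the main obstacle.'' First, the measure $\mu$ is \emph{not} essentially $R^-[g]\,u^{(n+2)/(n-2)}\,dV_{\bar g}$: that term contributes to $\mu^-$, not $\mu^+$, and it is $\mu^+$ that bounds $u$ from above in the representation $u \le C\,\mathscr{R}^{2,D}_{\mu^+} + h$. The relevant measure is the full $-\Delta u$, whose positive part on $D\setminus S$ carries $-R[\bar g]u + R^+[g]\,u^{(n+2)/(n-2)}$ together with a possible singular piece on $S$. The $L^{2n/(n+2)}(g)$ hypothesis on $R^-[g]$ makes $R^-[g]\,u^{(n+2)/(n-2)}$ integrable via H\"older (not Sobolev), but says nothing directly about $R^+[g]\,u^{(n+2)/(n-2)}$. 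Showing the latter lies in $L^1(D)$, and hence that $-\Delta u$ extends as a finite Radon measure across $S$, is the substantive content of the paper's Lemma~\ref{Lem:radon-measure}, proved by testing against truncations $\alpha_s(u)$ and a Fatou argument; it is not ``routine,'' and your remark about ``absorbing the positive scalar curvature contribution'' underestimates this. Second, the auxiliary conformal change goes the wrong way: the paper arranges $R[\bar g]\le 0$ (via a connected-sum trick to force negative Yamabe invariant), not $R[\bar g]\ge 0$, precisely so that $-R[\bar g]u\ge 0$ and the sign structure in the Radon-measure argument works. With these two corrections your outline becomes the paper's proof.
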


Theorem \ref{Thm:intr-main-thm-3} is a slight improvement of  \cite[Theorem 2.7]{SY} and \cite[Theorem C]{Ca}. Our approach is
based on Theorem \ref{Thm:intr-main-thm-1} and Theorem \ref{Thm:intr-main-thm-2}. Particularly, Theorem 
\ref{Thm:intr-main-thm-3} covers domains in general manifolds, while others (cf. \cite{SY, Ca}) are restricted to domains in round 
spheres. The use of auxiliary 
testing functions built from the level sets is the key analytic technique (cf. \cite{DHM, B-V, MQ-a, MQ-g}). 
We remark that, for our approach, the complement 
$M^n\setminus D$ is not relevant (cf. Theorem \ref{Thm:main-result-R} in Section \ref{Sec:scalar-curvature}). 

In conformal geometry, one considers the Paneitz operator
$$
P_4  = \Delta^2  + \text{div} (4 A\cdot \nabla - (n-2)J\nabla ) + \frac {n-4}2 Q_4
$$ 
and the associated $Q$-curvature,
$$
Q_4 = - \Delta J + \frac n2 J^2 - 2|A|^2,
$$
where the Schouten curvature $A = \frac 1{n-2} (Ric - Jg)$ and $J = \frac 1{2(n-1)}R$.
The curvature $Q_4$, under a conformal change of the metric, transforms by the $Q$-curvature equation:
\begin{equation}\label{Equ:Q-curvature-equation}
P_4[\bar g] u= \frac {n-4}2 Q_4 [u^\frac 4{n-4} \bar g] u^\frac {n+4}{n-4} \text{ in dimensions $\geq 5$}
\end{equation}
and
\begin{equation}\label{Equ:Q-curvature-equation-4}
P_4 [\bar g] u + Q_4[\bar g] = Q_4[e^{2u}\bar g] e^{4u} \text{ in dimension $4$}.
\end{equation} 

On $Q$-curvature equations in dimensions greater than 4, we have

\begin{theorem}\label{Thm:intr-main-thm-4}
Let $(M^n, \bar g)$ be a complete Riemannian manifold for $n\geq 5$ and $S$ be a 
compact subset in $M^n$. And let $D$ be a bounded open neighborhood of $S$. Suppose that $g = u^\frac 4{n-4} \bar g$ is a 
conformal metric on $D\setminus S$ with nonnegative scalar curvature $R[g]\geq 0$ and is geodesically complete 
near $S$. And suppose also that
$$
Q_4^-[g] \in L^\frac {2n}{n+4}(D\setminus S, g),
$$
where $Q_4^-[g]$ is the negative part of $Q$-curvature of the metric $g$. Then 
\begin{equation}\label{Equ:hausdorff-dim-R-intr}
\text{dim}_{\mathscr{H}} (S) \leq \frac {n-4}2.
\end{equation}
\end{theorem}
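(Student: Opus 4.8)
The plan is to argue by contradiction. Assume $\text{dim}_{\mathscr{H}}(S) > \frac{n-4}{2}$ and fix $d$ with $\frac{n-4}{2} < d < \min\{\text{dim}_{\mathscr{H}}(S),\, n-4\}$, which is possible since $n\geq 5$. The relevant scale is the Riesz kernel of order $\alpha = 4$: the Green's function of $\Delta_{\bar g}^2$ (equivalently of the Paneitz operator $P_4[\bar g]$) behaves like $d(x,y)^{4-n}$, and $\alpha = 4\in(1,n)$ with $d < n-4 = n-\alpha$, so Theorem \ref{Thm:intr-main-thm-1} is applicable with this $\alpha$ and $d$. The goal is to find $p\in S$ and a ray from $p$ along which the conformal factor satisfies $u(x)\leq C\,d(x,p)^{-(n-4-d)}$; since the $g$-length element along a $\bar g$-radial ray is $u^{2/(n-4)}\,dr$ and $n-4-d < \frac{n-4}{2}$, this will make $\int_0^{\delta'} u^{2/(n-4)}\,dr \lesssim \int_0^{\delta'} r^{-2(n-4-d)/(n-4)}\,dr$ converge (the exponent $2(n-4-d)/(n-4) < 1$ being exactly equivalent to $d > \frac{n-4}{2}$), producing a curve of finite $g$-length terminating at a point of $S$ and contradicting the geodesic completeness of $g$ near $S$.

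The heart of the argument is to show that, near $S$, $u$ is dominated by a Riesz potential of order $4$ of a finite nonnegative Radon measure $\mu$ plus a locally bounded term. Rewrite the $Q$-curvature equation $P_4[\bar g]u = \frac{n-4}{2}Q_4[g]u^{(n+4)/(n-4)}$ using $u^{(n+4)/(n-4)}\,dV_{\bar g} = u^{-1}\,dV_g$, so the source measure is $\frac{n-4}{2}Q_4[g]u^{-1}\,dV_g$. The hypothesis $R[g]\geq 0$ — equivalently $J[g]\geq 0$ — is used to pass from the fourth-order equation to a second-order framework: the auxiliary function $v = u^{(n-2)/(n-4)}$ satisfies $g = v^{4/(n-2)}\bar g$ and $-\frac{4(n-1)}{n-2}\Delta_{\bar g}v + R[\bar g]v = R[g]v^{(n+2)/(n-2)}\geq 0$, i.e. $v$ is a positive supersolution of the conformal Laplacian, and a Paneitz/iterated-Laplacian identity expresses the fourth-order equation as two coupled second-order equations whose leading sources are nonnegative. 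Comparison (using that the biharmonic Green kernel $\sim d(x,y)^{4-n}$ is locally positive for $n\geq 5$) then gives the domination $u \lesssim (\text{kernel})\ast\mu + h$ with $h$ locally bounded. Finiteness of $\mu$ rests on: (i) Hölder's inequality with $\int_D u^{-2n/(n-4)}\,dV_g = \mathrm{Vol}_{\bar g}(D) < \infty$ and $Q_4^-[g]\in L^{2n/(n+4)}(dV_g)$, which makes $Q_4^-[g]u^{-1}\,dV_g$ a finite measure; and (ii) $R[g]\geq 0$ together with the Paneitz identity and a Green's-identity argument on $D\setminus S$ whose boundary terms at $S$ are killed by geodesic completeness, which forces the total source $Q_4[g]u^{-1}\,dV_g$ to have finite mass, hence so does its positive part.

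Granting this, apply Theorem \ref{Thm:intr-main-thm-1} to $\mu$, $\alpha = 4$ and the chosen $d$: there exist $p\in S$, a set $E$ that is $4$-thin at $p$, a constant $C$ and $\delta > 0$ with
\[
\int_G \frac{1}{d(x,y)^{n-4}}\,d\mu \leq \frac{C}{d(x,p)^{\,n-4-d}},\qquad x\in B_\delta(p)\setminus E.
\]
Since $4\in(1,n]$, Theorem \ref{Thm:intr-main-thm-2} produces a ray $\gamma$ from $p$ avoiding $E$ inside a small ball at $p$; along $\gamma$ the domination gives $u(x)\leq C'\,d(x,p)^{-(n-4-d)}$ near $p$, so the $g$-length of $\gamma$ near $p$ is $\int_0^{\delta'} u^{2/(n-4)}\,dr \leq C''\int_0^{\delta'} r^{-2(n-4-d)/(n-4)}\,dr < \infty$ because $d > \frac{n-4}{2}$ makes the exponent less than $1$. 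Hence $\gamma$ reaches $S$ in finite $g$-distance, contradicting geodesic completeness near $S$, and therefore $\text{dim}_{\mathscr{H}}(S)\leq \frac{n-4}{2}$.

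The main obstacle is the potential-domination step, which has no counterpart in the second-order Theorem \ref{Thm:intr-main-thm-3}. Three new points must be handled: passing from $P_4[\bar g]$ to a second-order problem with the correct sign, where $R[g]\geq 0$ and the factorization through $v$ are essential; absorbing and estimating the non-sign-definite sub-leading terms of $P_4[\bar g]$ built from $A[\bar g]$ and $J[\bar g]$, which are harmless on the bounded set $D$ but must be moved into $\mu$; and proving that the \emph{whole} curvature source, not merely its negative part, defines a finite measure — this is exactly where $R[g]\geq 0$ enters through the Paneitz identity and the boundary analysis at $S$. Once the local positivity of the biharmonic Green kernel and the comparison principle are set up so that $u\lesssim(\text{kernel})\ast\mu + h$ is legitimate, Theorems \ref{Thm:intr-main-thm-1} and \ref{Thm:intr-main-thm-2} close the argument as in the scalar-curvature case.
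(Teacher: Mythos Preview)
Your proposal is correct and follows essentially the same route as the paper: reduce to a potential domination $u \lesssim \mathscr{R}^{4,D}_{\mu^+} + (\text{biharmonic})$ for a finite Radon measure $\mu = \Delta^2_{\bar g} u$ on $D$, then apply Theorems \ref{Thm:intr-main-thm-1} and \ref{Thm:intr-main-thm-2} with $\alpha=4$ to produce a ray of finite $g$-length into $S$. Two technical points the paper handles that you leave implicit: (i) before anything else the paper makes a conformal change of the \emph{background} metric so that $R[\bar g]\le -c_0<0$ on $D$, which is what makes $-\Delta_{\bar g} u\ge 0$ pointwise via \eqref{Equ:laplace-u} and gives the crucial sign on the inner boundary term below; (ii) the finiteness of $\mu$ is not obtained by a Green's identity on $D\setminus S$ with boundary terms ``at $S$'', but by testing the $Q$-curvature equation against the level-set cutoffs $\phi_{\alpha,\beta}$ vanishing on $\{u=\alpha\}\cup\{u=\alpha+\beta\}$ --- the inner boundary contribution $\int_{\partial\Sigma_{\alpha+\beta}}(-\Delta u)\,\partial_\nu u\,d\sigma$ then has the favorable sign, yielding $\int_{\Sigma_\alpha\setminus\Sigma_{\alpha+\beta}}|\Delta u|^2\le C(\alpha+\beta)$ and hence $\Delta u\in L^p(D)$ for $p<\tfrac{n}{n-2}$, after which Lemma~\ref{Lem:radon-measure} applied to $v=-\Delta u$ gives that $\Delta^2 u$ is a Radon measure.
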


There have been a lot of  works on the study of singular solutions to $Q$-curvature equations on manifolds of dimensions greater
than $4$,  notably \cite{QR1, QR2, CHY, GMS}, for example. Theorem \ref{Thm:intr-main-thm-4} is an improvement
of \cite[Theorem 1.2]{CHY} in terms of curvature conditions and the coverage of domains in general manifolds. 
The preliminary estimates in Lemma \ref{Lem:prelim-estimate} serve to facilitate the argument of treating 
the bi-Laplace as the iteration of the Laplace, which are interesting alternatives to usual elliptic estimates of $Q$-curvature 
equations. Again, the complement $M^n\setminus D$ is not relevant for our approach (cf. Theorem \ref{Thm:main-result-Q}
in Section \ref{Sec:Q-curvature}).

On $Q$-curvature equations in dimension 4, there have been several attempts to establish analogue results of Huber's theorem 
on finiteness of singularities (cf. \cite{CQY, CH, MQ-a, MQ-g}). $Q$-curvature in dimension 4 indeed plays a similar role as the 
Gaussian curvature does in dimension 2 (please see \eqref{Equ:Q-curvature-equation-4} for instance). 
Our following result is a significant improvement of the finiteness result of \cite[Theorem 2]{CQY} (cf. also \cite{CQY-d}). 
It covers domains in general manifolds and drops other additional curvature
assumptions in \cite[Theorem 2]{CQY}. The potential theory
approach here, particularly Theorem \ref{Thm:intr-main-thm-1} and Theorem \ref{Thm:intr-main-thm-2}, seems to be more 
effective. And the preliminary estimates in Lemma \ref{Lem:prelim-estimate-4} are interesting for $Q$-curvature
equations in dimension 4 too. Once again, the complement $M^n\setminus D$ is not relevant for our approach (cf. Theorem 
\ref{Thm:main-result-Q-4} in Section \ref{Sec:Q-curvature}).

\begin{theorem}\label{Thm:intr-main-thm-5}
Let $(M^4, \bar g)$ be a complete Riemannian manifold and $S$ be a 
compact subset in $M^n$. And let $D$ be a bounded open neighborhood of $S$. Suppose that $g= e^{2u}\bar g$ is a conformal metric on $D\setminus S$ with nonnegative scalar curvature $R[g]\geq 0$ and is geodesically complete near $S$. And suppose that
$$
\int_D Q_4^-[g]dvol[g] < \infty,
$$
where $Q_4^-[g]$ is the negative part of $Q$-curvature of the metric $g$. Then $S$ consists of only finitely many points.
\end{theorem}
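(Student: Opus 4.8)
The plan is to reduce the finiteness statement to an application of Theorem \ref{Thm:intr-main-thm-1} and Theorem \ref{Thm:intr-main-thm-2}, by first converting the $Q$-curvature equation \eqref{Equ:Q-curvature-equation-4} into a potential-theoretic estimate for $u$ near $S$. First I would argue by contradiction: if $S$ were infinite, it would have an accumulation point $p\in S$, and I would localize to a small ball $B_\delta(p)$ in which $S$ has a positive $0$-dimensional ``thickness'' in the sense that its Hausdorff dimension restricted to every neighborhood of $p$ is positive — more precisely, I want to set up the hypotheses of Theorem \ref{Thm:intr-main-thm-1} with $d$ arbitrarily small and $\alpha$ slightly bigger than $1$ (note $n=4$, so $d<n-\alpha$ is satisfied for $d$ small and $\alpha\in(1,3)$), applied to the measure $\mu$ built from $Q_4[g]$ (or rather $Q_4^-[g]$ together with the other curvature terms). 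The key analytic input is the preliminary estimates in Lemma \ref{Lem:prelim-estimate-4}: these should allow me to treat $P_4 = \Delta^2 + \text{lower order}$ as an iterated Laplacian and, using the finiteness $\int_D Q_4^-[g]\,dvol[g]<\infty$ together with $R[g]\geq 0$, to write the conformal factor $u$ (up to a bounded harmonic/regular piece) as a superposition of a Log potential and the first Laplace-iterate of a Riesz-type potential of a finite measure supported near $S$.

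The second step is the geometric completeness condition. Geodesic completeness of $g=e^{2u}\bar g$ near $S$ forces $u\to+\infty$ (in an integrated/averaged sense) along every path approaching any point of $S$ — more precisely, $\int_0^{\delta} e^{u(\gamma(t))}\,dt=\infty$ for every curve $\gamma$ hitting $S$. On the other hand, the potential representation from Step 1 plus Theorem \ref{Thm:intr-main-thm-1} gives a point $p\in S$ and an $\alpha$-thin set $E$ at $p$ with an upper bound of the form $u(x)\le (n-2-d)\log\frac{1}{d(x,p)}+C$ — schematically, a logarithmic upper bound with a coefficient controlled by $d$, valid on $B_\delta(p)\setminus E$. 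Then Theorem \ref{Thm:intr-main-thm-2} (with $\alpha\in(1,n]=(1,4]$) produces an actual geodesic ray $\gamma$ emanating from $p$ that avoids $E$ near $p$. Along this ray the logarithmic upper bound on $u$ holds, so $e^{u(\gamma(t))}\le C\,t^{-(n-2-d)}=C\,t^{-(2-d)}$; choosing $d<1$ makes this integrable in $t$ near $0$, contradicting geodesic completeness along $\gamma$. This contradiction shows $S$ cannot accumulate, hence is finite.

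I expect the main obstacle to be Step 1: carefully justifying the passage from the fourth-order equation \eqref{Equ:Q-curvature-equation-4} to a clean potential representation for $u$ with a \emph{finite} positive measure, controlling the sign and the integrability of all the terms coming from $P_4[\bar g]$ and $Q_4[\bar g]$ on the ambient manifold, and from the cross terms $4A\cdot\nabla - (n-2)J\nabla$. The hypothesis $R[g]\ge 0$ has to be leveraged (presumably via the standard trick that nonnegative scalar curvature controls $|\nabla u|$ or $\Delta u$ in terms of $|\nabla u|^2$, giving a one-sided bound that makes $-\Delta u$ essentially a positive measure plus an $L^1$ error), and the delicate point is ensuring the resulting measure is finite so that Theorem \ref{Thm:intr-main-thm-1} applies with its required finiteness of $\mu$. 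A secondary technical point is that Theorem \ref{Thm:intr-main-thm-1} as stated gives the estimate away from a set $\alpha$-thin at a \emph{single} point $p$ that we do not get to choose; but since we only need \emph{one} bad point to derive a contradiction with completeness at that point, this is exactly the form we want, provided we have first arranged that \emph{every} point of an infinite $S$ would have to be a completeness point — which is automatic since completeness near $S$ means completeness near each of its points.
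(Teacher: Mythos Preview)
There is a genuine gap in your choice of potential-theoretic tool. In dimension $4$ the iterated Green function for $\Delta^2$ is \emph{logarithmic}, not Riesz: once you have $\Delta^2 u=\mu$ as a finite Radon measure (this part of your Step 1 is right in spirit, and the paper carries it out via Lemma \ref{Lem:prelim-estimate-4} and an argument analogous to Lemma \ref{Lem:radon-measure-Q}), the representation gives
\[
u(x)\ \le\ C\,\mathscr{R}^{4,D}_{\mu^+}(x)+b(x)
\]
with $b$ biharmonic, where $\mathscr{R}^{4,D}$ is the \emph{Log} potential of Definition \ref{Def:potential-euclidean} (case $\alpha=n$). Theorem \ref{Thm:intr-main-thm-1} concerns Riesz potentials with $\alpha\in(1,n)$ and produces power bounds $C/d(x,p)^{n-\alpha-d}$, not the logarithmic bound you wrote; there is no route from that estimate to ``$u(x)\le (n-2-d)\log\frac{1}{d(x,p)}+C$''. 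Moreover, Theorem \ref{Thm:intr-main-thm-1} requires $\dim_{\mathscr H}(S)>d$, which you cannot arrange with $d>0$ if $S$ is merely an infinite compact (hence possibly countable, dimension-zero) set. Finally, your integrability check is inverted: $\int_0^\delta t^{-(2-d)}\,dt$ converges iff $d>1$, not $d<1$.

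The paper instead applies Theorem \ref{Thm:n-potential} (the Log-potential estimate, the $\alpha=n$ analogue of Theorem \ref{Thm:intr-main-thm-1}) at \emph{every} point $p\in S$: it yields an $n$-thin set $E$ at $p$ with
\[
\limsup_{\substack{x\to p\\ x\notin E}}\ \frac{u(x)}{\log\frac{1}{d(x,p)}}\ \le\ C\,\mu(\{p\}),
\]
and then Theorem \ref{Thm:intr-main-thm-2} furnishes a ray from $p$ avoiding $E$. Geodesic completeness along that ray forces $\mu(\{p\})\ge 1/C$ for each $p\in S$; since $\mu$ is a finite measure, $S$ can carry only finitely many such atoms. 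So the argument is direct, not by contradiction via Hausdorff dimension, and the correct input is the Log-potential theorem rather than the Riesz one.
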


The organization of this paper is as follows: In Section \ref{Sec:potential-capacity} we define linear potentials and develop potential
theory with the outer capacity and the notion of $\alpha$-thinness. Then we prove Theorem \ref{Thm:intr-main-thm-1} and Theorem 
\ref{Thm:intr-main-thm-2}. In Section \ref{Sec:scalar-curvature} we build the framework to use potential theory developed in Section 
\ref{Sec:potential-capacity} to estimate the Hausdorff dimension of singular sets which correspond to the ends of complete conformal
metrics on domains of manifolds. And we prove Theorem \ref{Thm:intr-main-thm-3}. 
In Section \ref{Sec:Q-curvature}, based on the framework built in Section \ref{Sec:scalar-curvature},
we prepare some preliminary estimates and prove Theorem \ref{Thm:intr-main-thm-4} and Theorem \ref{Thm:intr-main-thm-5}
for $Q$-curvature equations.


\section{On linear potentials} \label{Sec:potential-capacity}

The study of linear potentials has been extensive and full of great achievements. 
For this paper, readers are referred, for instance, to \cite{Mi, AH96, AG} for
good introductions. In this section we will introduce the theory of linear potential to facilitate the discussion of some
estimates of linear potential that is inspired by the one in \cite{Cv, Hu, AH, MQ-a, MQ-g}. 
The estimates provide us some alternative tools to study the problems on the
Hausdorff dimensions of singularities of solutions to a class of geometric partial differential equations in conformal geometry
(cf. \cite{SY, SYb, CHY, CH} for instance).  
We will introduce the potential theory in the way that is brief, mostly self-contained, and suffices to serve our purpose.


\subsection{Linear potential and the outer capacity in Euclidean spaces} 
For the purpose to relate potentials on Euclidean spaces to that on manifolds, we want to introduce 
potentials that are possibly confined to an open subset $\Omega \subseteq R^n$
in the Euclidean space. We will use the definition of a Radon measure on locally compact Hausdorff spaces 
in \cite[page 455]{RH}. 

\begin{definition}\label{Def:potential-euclidean}
Let $\Omega\subseteq R^n$ be a bounded open subset in the Euclidean space $R^n$. Then, for $x\in \Omega$, let 
\begin{equation}\label{Equ:potential-euclidean}
R^{\alpha, \Omega}_\mu (x) = \left\{\aligned \int_\Omega \frac 1{|x-y|^{n-\alpha}} d\mu(y)  \quad & \text{ when 
$\alpha \in (1, n)$}\\ \int_\Omega \log \frac D{|x-y|} d\mu(y) \quad & \text{ when $\alpha = n$}
\endaligned\right.
\end{equation}
for a Radon measure $\mu$ on $\Omega$, where $D$ is the diameter of $\Omega$. 
\end{definition}

For basic properties of the potential $R^{\alpha, \Omega}_\mu(x)$, 
readers are referred to \cite[Chapter 2]{Mi}. Most facts, results,  and arguments in \cite[Chapter 2]{Mi} that are 
relevant for the discussions in this paper hold with little or slight changes. 
 
\begin{definition}\label{Def:outer-capacity-euclidean} Let $E$ be a subset in $\Omega$ and $\Omega$ be a bounded
open subset in $R^n$. For $\alpha\in (1, n]$,  we define a 
capacity by
\begin{equation}\label{Equ:outer-capacity-c-euclidean}
C^\alpha (E, \Omega) = \inf \{ \mu(\Omega): \mu  \geq 0 \text{ on $\Omega$ and }
R^{\alpha, \Omega}_\mu(x) \geq 1 \text{ for all $x\in E$}\}.
\end{equation}
\end{definition}

Because of the choice of the kernel functions in Definition \ref{Def:potential-euclidean}, the capacity $C^\alpha(E, \Omega)$ 
in Definition \ref{Def:outer-capacity-euclidean} is not intended to be the same as relative capacity where the kernel function 
is the Green function for a so-called Greenian domain $\Omega$. Similar to 
\cite[Theorem 4.1 in Chapter 2]{Mi} and \cite[Section 2.6]{Mi}, we have

\begin{lemma}\label{Lem:outer-capacity-euclidean} Let $C^{\alpha}$ be the capacity defined as in 
Definition \ref{Def:outer-capacity-euclidean} for $\alpha\in (1, n]$.
\begin{enumerate}
\item $C^{\alpha}$ is nondecreasing, that is, 
$$
C^{\alpha} (E_1, \Omega) \leq C^{\alpha}(E_2, \Omega)
$$
when $E_1\subseteq E_2\subseteq\Omega\subseteq R^n$.
\item $C^{\alpha}$ is countably subadditive, that is,
$$
C^{\alpha}(\bigcup_{i=1}^\infty E_i, \Omega) \leq \sum_{i=1}^\infty C^{\alpha}(E_i, \Omega)
$$
for subsets $E_i\subseteq \Omega$.
\item $C^{\alpha}$ is an outer capacity, that is,
$$
C^\alpha(E, \Omega) = \inf \{ C^{\alpha}(U, \Omega):  E \subseteq U \text{ and  $U\subseteq \Omega$ open}\}.
$$
\end{enumerate}
\end{lemma}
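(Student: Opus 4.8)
The plan is to verify the three properties in turn, following the template of the corresponding facts for Riesz potentials in \cite[Chapter 2]{Mi}; only part (3) involves a genuine idea. Part (1) is immediate from the definition \eqref{Equ:outer-capacity-c-euclidean}: if $E_1 \subseteq E_2 \subseteq \Omega$, then every nonnegative measure $\mu$ with $R^{\alpha,\Omega}_\mu \ge 1$ on $E_2$ also satisfies $R^{\alpha,\Omega}_\mu \ge 1$ on $E_1$, so the infimum defining $C^\alpha(E_1,\Omega)$ is taken over a larger admissible class and is hence no larger than $C^\alpha(E_2,\Omega)$.

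For part (2) I may assume $\sum_i C^\alpha(E_i,\Omega) < \infty$, as otherwise there is nothing to prove. Given $\varepsilon > 0$, I would choose for each $i$ a nonnegative Radon measure $\mu_i$ on $\Omega$ with $R^{\alpha,\Omega}_{\mu_i} \ge 1$ on $E_i$ and $\mu_i(\Omega) \le C^\alpha(E_i,\Omega) + 2^{-i}\varepsilon$, and set $\mu = \sum_i \mu_i$, a finite nonnegative Radon measure on $\Omega$. Since the kernels in \eqref{Equ:potential-euclidean} are nonnegative on $\Omega$ (for $\alpha = n$ because $|x-y| \le D$), the monotone convergence theorem gives $R^{\alpha,\Omega}_\mu = \sum_i R^{\alpha,\Omega}_{\mu_i}$ pointwise; for $x \in \bigcup_i E_i$, choosing $j$ with $x \in E_j$, every summand is $\ge 0$ and the $j$-th is $\ge 1$, so $R^{\alpha,\Omega}_\mu(x) \ge 1$. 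Thus $\mu$ is admissible for $\bigcup_i E_i$ and $C^\alpha(\bigcup_i E_i,\Omega) \le \mu(\Omega) \le \sum_i C^\alpha(E_i,\Omega) + \varepsilon$; letting $\varepsilon \to 0$ finishes it.

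For part (3), the inequality $C^\alpha(E,\Omega) \le \inf\{ C^\alpha(U,\Omega) : E \subseteq U \subseteq \Omega,\ U\text{ open}\}$ is immediate from part (1). For the reverse, the key observation is that $R^{\alpha,\Omega}_\mu$ is lower semicontinuous on $\Omega$ for every nonnegative Radon measure $\mu$: if $x_j \to x$ in $\Omega$, then Fatou's lemma (the integrands being nonnegative) together with the fact that, for each fixed $y$, the kernel is continuous in $x$ off the diagonal and equals $+\infty$ at $x = y$ — hence lower semicontinuous in $x$ — gives $\liminf_j R^{\alpha,\Omega}_\mu(x_j) \ge R^{\alpha,\Omega}_\mu(x)$. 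Now, the case $C^\alpha(E,\Omega) = \infty$ being handled by part (1), fix $\varepsilon > 0$ and pick $\mu$ admissible for $E$ with $\mu(\Omega) \le C^\alpha(E,\Omega) + \varepsilon$. For $\delta > 0$ the dilated measure $(1+\delta)\mu$ satisfies $R^{\alpha,\Omega}_{(1+\delta)\mu} = (1+\delta)R^{\alpha,\Omega}_\mu \ge 1 + \delta > 1$ on $E$, so the set $U_\delta := \{ x \in \Omega : R^{\alpha,\Omega}_{(1+\delta)\mu}(x) > 1 \}$ is open (by the semicontinuity just noted), contains $E$, and $(1+\delta)\mu$ is admissible for $U_\delta$. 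Hence $C^\alpha(U_\delta,\Omega) \le (1+\delta)\mu(\Omega) \le (1+\delta)(C^\alpha(E,\Omega) + \varepsilon)$; letting $\delta \to 0$ and then $\varepsilon \to 0$ gives $\inf_U C^\alpha(U,\Omega) \le C^\alpha(E,\Omega)$.

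There is essentially no obstacle beyond the lower-semicontinuity statement used in part (3), and the $(1+\delta)$-dilation trick is precisely what converts ``$\ge 1$ on $E$'' into ``$> 1$ on an open neighborhood of $E$''. The one point that demands care is to confirm that the logarithmic case $\alpha = n$ behaves throughout exactly like the Riesz case $\alpha \in (1,n)$, which it does because normalizing the log kernel by the diameter $D$ keeps it nonnegative on $\Omega \times \Omega$.
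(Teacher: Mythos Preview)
Your proof is correct and follows the standard route; the paper itself does not supply a proof for this lemma but simply cites \cite[Theorem 4.1 in Chapter 2]{Mi} and \cite[Section 2.6]{Mi}, and your arguments (monotonicity from the definition, subadditivity via $\mu=\sum_i\mu_i$, and the outer-capacity property via lower semicontinuity of $R^{\alpha,\Omega}_\mu$ together with the $(1+\delta)$-dilation trick) are precisely the standard ones from that reference.
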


The immediate and important property of the outer capacity $C^{\alpha}$ in Definition \ref{Def:outer-capacity-euclidean} 
is the scaling property (cf. \cite[page 135]{AG}).

\begin{lemma}\label{Lem:scaling-euclidean}
For a positive number $\lambda$, let
$$
A_\lambda = \{\lambda x: x\in A\}
$$
for any subset $A$ in $R^n$. Then, for $\alpha \in (1, n]$,
$$
C^{\alpha}(E_\lambda, \Omega_\lambda) = \lambda^{n-\alpha}C^\alpha(E, \Omega).
$$
\end{lemma}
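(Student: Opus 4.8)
The plan is to exploit the natural correspondence between nonnegative Radon measures on $\Omega$ and on $\Omega_\lambda$ furnished by the dilation $T_\lambda\colon y\mapsto \lambda y$, which is a homeomorphism of $R^n$ carrying $\Omega$ onto $\Omega_\lambda$. Given a measure $\mu$ that is admissible for $C^\alpha(E,\Omega)$ — that is, $\mu\geq 0$ on $\Omega$ and $R^{\alpha,\Omega}_\mu\geq 1$ on $E$ — I would set $\nu=\lambda^{n-\alpha}(T_\lambda)_{\#}\mu$, which is a nonnegative Radon measure on $\Omega_\lambda$ with total mass $\nu(\Omega_\lambda)=\lambda^{n-\alpha}\mu(\Omega)$.

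The key point is the behavior of the potential under $T_\lambda$. For $\alpha\in(1,n)$, the change of variables $z=\lambda y$ gives, for every $x\in\Omega$,
$$
R^{\alpha,\Omega_\lambda}_{(T_\lambda)_{\#}\mu}(\lambda x)=\int_\Omega \frac{1}{|\lambda x-\lambda y|^{n-\alpha}}\,d\mu(y)=\lambda^{-(n-\alpha)}R^{\alpha,\Omega}_\mu(x),
$$
so that $R^{\alpha,\Omega_\lambda}_{\nu}(\lambda x)=R^{\alpha,\Omega}_\mu(x)\geq 1$ for all $\lambda x\in E_\lambda$; hence $\nu$ is admissible for $C^\alpha(E_\lambda,\Omega_\lambda)$, and taking the infimum over all admissible $\mu$ yields $C^\alpha(E_\lambda,\Omega_\lambda)\leq\lambda^{n-\alpha}C^\alpha(E,\Omega)$. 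In the borderline case $\alpha=n$ the same substitution must also track the diameter: since $\operatorname{diam}(\Omega_\lambda)=\lambda\operatorname{diam}(\Omega)$, the factors of $\lambda$ inside the logarithm cancel, $\log\frac{\lambda D}{\lambda|x-y|}=\log\frac{D}{|x-y|}$, so the Log potential is genuinely scale invariant, which is exactly consistent with $\lambda^{n-\alpha}=1$.

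The reverse inequality needs no new argument: applying the bound just proved with $\lambda$ replaced by $\lambda^{-1}$ and the pair $(E,\Omega)$ replaced by $(E_\lambda,\Omega_\lambda)$ — noting $(E_\lambda)_{1/\lambda}=E$ and $(\Omega_\lambda)_{1/\lambda}=\Omega$ — gives $C^\alpha(E,\Omega)\leq\lambda^{-(n-\alpha)}C^\alpha(E_\lambda,\Omega_\lambda)$, which rearranges to the opposite inequality, and the two together give equality. There is no genuine obstacle here; the only steps demanding care are bookkeeping ones: checking that $(T_\lambda)_{\#}\mu$ is again a Radon measure (immediate from $T_\lambda$ being a homeomorphism), matching admissible measures on $\Omega$ bijectively to admissible measures on $\Omega_\lambda$ so that the two infima correspond precisely, and — in the case $\alpha=n$ — remembering that the constant $D$ appearing in $R^{n,\Omega_\lambda}_{\,\cdot}$ is the diameter of the dilated domain, which is precisely what makes the Log potential scale invariant.
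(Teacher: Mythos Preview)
Your proposal is correct and follows essentially the same approach as the paper: push forward a measure by the dilation $T_\lambda$, compute that the Riesz kernel picks up the factor $\lambda^{-(n-\alpha)}$ (and that the Log kernel is scale invariant because the diameter rescales too), and read off the scaling of the capacity. The only cosmetic difference is that the paper packages the argument as a single bijection between admissible measures on $\Omega$ and on $\Omega_\lambda$ to obtain equality in one stroke, whereas you prove one inequality and then invoke the substitution $\lambda\mapsto\lambda^{-1}$ for the reverse; both are equivalent and equally clean.
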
 

\begin{proof} For a nonnegative Radon measure $\mu$ on $\Omega$, we associate it with a nonnegative
Radon measure 
$$
\mu^* (A_\lambda) = \mu(A)
$$
on $\Omega_\lambda$. Then
$$
R^{\alpha, \Omega_\lambda}_{\mu^*} (\lambda x) = \lambda^{\alpha - n}
R^{\alpha, \Omega}_{\mu} (x)
$$
for $x\in \Omega$. Therefore
$$
\aligned
C^{\alpha} & (E_\lambda, \Omega_\lambda) = \inf \{\mu^* (\Omega_\lambda):  R^{\alpha, \Omega_\lambda}_{\mu^*} 
(\lambda x) \geq 1 \text{ for all $x\in E$}\}\\
& = \lambda^{n-\alpha} \inf\{ \lambda^{\alpha-n}\mu (E):  R^{\alpha, \Omega}_{\lambda^{\alpha -n}\mu} (x) \geq 1 
\text{ for all $x\in E$}\} \\
& = \lambda^{n-\alpha} C^\alpha(E, \Omega).
\endaligned
$$
\end{proof}

Next important property of the outer capacity $C^{\alpha}$ in Definition \ref{Def:outer-capacity-euclidean}
is the contractive property (cf. \cite{Mi, AH96, AG}).

\begin{lemma}\label{Lem:contractive-property} Suppose that 
$$
\Phi: \Omega \to \Omega
$$
is a contractive map, that is,
$$
|\Phi(x) - \Phi(y)| \leq |x - y|
$$
for all $x, y\in \Omega$. Then, for $\alpha\in (1, n]$, 
$$
C^{\alpha}(\Phi(E), \Omega) \leq C^{\alpha}(E, \Omega)
$$
for  any subset $E\subseteq \Omega$.
\end{lemma}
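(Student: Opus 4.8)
The plan is to exploit the monotonicity of the kernel together with a push-forward of measures. I would first fix $\alpha \in (1,n]$ and write the common kernel as $k_\alpha(t) = t^{\alpha-n}$ when $\alpha \in (1,n)$ and $k_\alpha(t) = \log(D/t)$ when $\alpha = n$, where $D$ is the diameter of $\Omega$, so that $R^{\alpha,\Omega}_\mu(x) = \int_\Omega k_\alpha(|x-y|)\, d\mu(y)$. The point I would record at the outset is that $k_\alpha$ is nonincreasing on $(0,D]$ (and, when $\alpha = n$, nonnegative there), and that every pair of points of $\Omega$ lies at distance at most $D$, so all distances occurring below stay in this range.

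Next, assuming $C^\alpha(E,\Omega) < +\infty$ (otherwise there is nothing to prove), I would take an arbitrary admissible measure $\mu \ge 0$ for $E$, i.e. a nonnegative Radon measure on $\Omega$ with $R^{\alpha,\Omega}_\mu \ge 1$ on $E$, and push it forward by $\Phi$ to define $\nu := \Phi_\# \mu$, $\nu(A) = \mu(\Phi^{-1}(A))$. Since $\Phi$ is continuous (being contractive) and $\mu$ is finite, $\nu$ is again a finite nonnegative Radon measure on $\Omega$, and $\nu(\Omega) = \mu(\Phi^{-1}(\Omega)) = \mu(\Omega)$ because $\Phi$ maps $\Omega$ into $\Omega$. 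I would then verify that $\nu$ is admissible for $\Phi(E)$: for $x \in E$, the change-of-variables formula for push-forwards gives
$$
R^{\alpha,\Omega}_\nu(\Phi(x)) = \int_\Omega k_\alpha(|\Phi(x)-w|)\, d\nu(w) = \int_\Omega k_\alpha(|\Phi(x)-\Phi(y)|)\, d\mu(y),
$$
and the contractive bound $|\Phi(x)-\Phi(y)| \le |x-y| \le D$ combined with the monotonicity of $k_\alpha$ yields $k_\alpha(|\Phi(x)-\Phi(y)|) \ge k_\alpha(|x-y|)$ pointwise in $y$ (interpreted in $[0,+\infty]$ where $\Phi(x)=\Phi(y)$). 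Integrating,
$$
R^{\alpha,\Omega}_\nu(\Phi(x)) \ge \int_\Omega k_\alpha(|x-y|)\, d\mu(y) = R^{\alpha,\Omega}_\mu(x) \ge 1.
$$
Since every point of $\Phi(E)$ is of the form $\Phi(x)$ with $x \in E$, this shows $\nu$ is admissible for $\Phi(E)$, hence $C^\alpha(\Phi(E),\Omega) \le \nu(\Omega) = \mu(\Omega)$; taking the infimum over all admissible $\mu$ finishes the argument.

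The argument is short, and I do not expect a serious obstacle; the only points needing a little care are measure-theoretic bookkeeping — that $\Phi_\#\mu$ is genuinely a Radon measure on $\Omega$ (automatic from continuity of $\Phi$ and finiteness of $\mu$ on the locally compact, second-countable space $R^n$), and that no measurability hypothesis on $E$ or $\Phi(E)$ is required since $C^\alpha$ is defined as an infimum over admissible measures for an arbitrary set. In the borderline case $\alpha = n$ one also wants to note explicitly that $\log(D/t)$ is nonnegative and decreasing exactly on $(0,D]$, which is precisely why the diameter $D$ appears as the normalizing constant in Definition \ref{Def:potential-euclidean}.
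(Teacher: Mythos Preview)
Your proof is correct and follows essentially the same approach as the paper: push forward an admissible measure $\mu$ for $E$ by $\Phi$, use the contractive bound together with the monotonicity of the kernel to show the push-forward is admissible for $\Phi(E)$ with the same total mass, and then take the infimum. Your write-up is, if anything, a bit more careful about the $\alpha = n$ case and the measure-theoretic bookkeeping than the paper's own proof.
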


\begin{proof} Let $\mu$ be a nonnegative Radon measure on $\Omega$ such that $R^{\alpha, \Omega}_\mu (x) \geq 1$ for all 
$x\in E$. Then let $\mu^*$ be a nonnegative Radon measure on $\Omega$ such that $
\mu^*(A) = \mu(\Phi^{-1}(A))$ for any $A\subseteq\Omega$ and therefore
$$
\int_\Omega f(\tilde y) d\mu^*(\tilde y) = \int_\Omega f\circ\Phi(y) d\mu(y).
$$
Notice  that
$$
\aligned
R^{\alpha, \Omega}_{\mu^*} (\Phi(x)) & = \int_{\Omega} \frac 1{|\Phi(x) - \tilde y|^{n-\alpha}} d\mu^*(\tilde y)  
=  \int_{\Omega} \frac 1{|\Phi(x) - \Phi(y)|^{n-\alpha}} d\mu(y) \\
& \geq   \int_\Omega \frac 1{|x - y|^{n-\alpha}} d\mu (y) = R^{\alpha, \Omega}_{\mu} (x) \geq 1.
\endaligned
$$
Thus
$$
\aligned
C^{\alpha}(\Phi(E), \Omega) & = \inf \{\nu(\Omega): \nu  \geq 0 \text{ on $\Omega$ and }R^{\alpha, \Omega}_\nu (x) \geq 1 
\text{ for all $x\in \Phi(E)$}\}\\
& \leq  \inf \{\mu^*(\Omega): \mu^* \text{ induced from $\mu$ and } R^{\alpha, \Omega}_{\mu^*} (\Phi(x)) \geq 1 \text{ for all $x\in E$}\}\\
&  = \inf \{\mu (\Omega): \mu  \geq 0 \text{ on $\Omega$ and } R^{\alpha, \Omega}_{\mu} (x) \geq 1 \text{ for all $x\in E$}\} 
= C^\alpha(E, \Omega).
\endaligned
$$
The argument for $\alpha =n$ is similar and the proof is complete.
\end{proof}

Before we introduce the notion of thinness by $C^\alpha$, for completeness, let us calculate the outer 
capacity $C^{\alpha}(S^{n-1}, B_2)$, where 
$$
B_2 = \{x\in R^n: |x| < 2\} \text{ and } S^{n-1}= \{x\in R^n: |x| = 1\}.
$$

\begin{lemma}\label{Lem:sphere-capacity-euclidean}  (\cite[Example 5.4.3]{Mi}) For $\alpha \in (1, n]$, 
$$
C^{\alpha} (S^{n-1}, B_2) = c(n, \alpha)
$$
for some positive constant $c(n, \alpha)$.
\end{lemma}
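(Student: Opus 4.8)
The plan is to sandwich $C^\alpha(S^{n-1},B_2)$ between two positive finite constants by exhibiting one admissible measure (for the upper bound) and by bounding the mass of an arbitrary competitor from below (for the lower bound). Write $\sigma$ for the $(n-1)$-dimensional Hausdorff measure on $S^{n-1}$, $\omega_{n-1}=\sigma(S^{n-1})$, and for $y\in\overline{B_2}$ set
\[
P(y)=\int_{S^{n-1}}\frac{d\sigma(x)}{|x-y|^{n-\alpha}}\ \ (1<\alpha<n),\qquad P(y)=\int_{S^{n-1}}\log\frac{4}{|x-y|}\,d\sigma(x)\ \ (\alpha=n),
\]
recalling that the diameter of $B_2$ is $4$ and that the kernel $\log(4/|x-y|)$ is nonnegative on $B_2$.

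For the finiteness $C^\alpha(S^{n-1},B_2)<\infty$, I would take $\mu$ to be a multiple of $\sigma$. Since $S^{n-1}$ and $\sigma$ are invariant under $O(n)$, which acts transitively on $S^{n-1}$, the function $x\mapsto P(x)$ is constant on $S^{n-1}$; call its value $P_0$. Using geodesic polar coordinates on $S^{n-1}$ centred at $x$ (a generic point at spherical distance $\theta\in[0,\pi]$, $d\sigma=\omega_{n-2}\sin^{n-2}\theta\,d\theta$ with $\omega_{n-2}=|S^{n-2}|$, and $|x-y|^2=2-2\cos\theta$), the integrand defining $P_0$ is bounded away from $\theta=0$ and behaves like $\theta^{\alpha-2}$ near $\theta=0$, so
\[
P_0\ \asymp\ \int_0^{\pi}\theta^{\alpha-2}\,d\theta\ <\ \infty
\]
precisely because $\alpha>1$ (the case $\alpha=n$ is easier still, and there $P_0>0$ is immediate from $|x-y|\le 2<4$; for $1<\alpha<n$ positivity of $P_0$ is clear since the integrand is positive). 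Hence $0<P_0<\infty$, the measure $\mu=P_0^{-1}\sigma$ satisfies $R^{\alpha,B_2}_\mu\equiv 1$ on $S^{n-1}$, and $C^\alpha(S^{n-1},B_2)\le P_0^{-1}\omega_{n-1}<\infty$.

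For positivity, take any $\mu\ge 0$ on $B_2$ with $R^{\alpha,B_2}_\mu\ge 1$ on $S^{n-1}$, integrate this inequality over $S^{n-1}$ against $\sigma$, and apply Tonelli (all integrands nonnegative):
\[
\omega_{n-1}\ \le\ \int_{S^{n-1}}R^{\alpha,B_2}_\mu(x)\,d\sigma(x)\ =\ \int_{B_2}P(y)\,d\mu(y)\ \le\ \Big(\sup_{y\in\overline{B_2}}P(y)\Big)\,\mu(B_2).
\]
So it remains to prove $M:=\sup_{y\in\overline{B_2}}P(y)<\infty$; then $C^\alpha(S^{n-1},B_2)\ge\omega_{n-1}/M>0$, and combined with the previous paragraph the common value $c(n,\alpha):=C^\alpha(S^{n-1},B_2)$ is a finite positive constant, as claimed.

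The one genuinely nonroutine point, and the main obstacle, is the uniform bound $M<\infty$: one must control the surface potential of $S^{n-1}$ as $y$ approaches $S^{n-1}$, where the kernel has its strongest singularity. Here I would argue by comparison with $P_0$. If $|y|\le\tfrac12$ then $|x-y|\ge\tfrac12$ for every $x\in S^{n-1}$, so $P(y)$ is trivially bounded. If $\tfrac12\le|y|\le 2$, let $x_*=y/|y|$ be the nearest point of $S^{n-1}$ to $y$; after a rotation take $x_*=e_1$, $y=re_1$ with $r\in[\tfrac12,2]$, and parametrize $x\in S^{n-1}$ by its spherical distance $\theta$ to $e_1$, so $|x-y|^2=1-2r\cos\theta+r^2$ and $|x-x_*|^2=2-2\cos\theta$. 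The elementary inequality
\[
|x-y|^2-\tfrac12|x-x_*|^2\ =\ r^2-(2r-1)\cos\theta\ \ge\ r^2-(2r-1)\ =\ (r-1)^2\ \ge\ 0
\]
gives $|x-y|\ge 2^{-1/2}|x-x_*|$, hence $P(y)\le 2^{(n-\alpha)/2}P_0$ in the case $1<\alpha<n$, and $P(y)\le\int_{S^{n-1}}\log\big(4\sqrt2/|x-x_*|\big)\,d\sigma(x)$ (a finite constant, by the same polar-coordinate estimate) in the case $\alpha=n$. Therefore $M<\infty$, which completes the plan.
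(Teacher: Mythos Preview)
Your proof is correct. The upper bound via the normalized surface measure is essentially the paper's argument as well. For the lower bound, however, you take a genuinely different route. The paper proceeds pointwise: given a competitor $\mu$, it invokes a Vitali-covering lemma (stated separately as Lemma~\ref{Lem:rough-kpata}) to find a single point $p\in S^{n-1}$ at which $\mu(B_r(p)\cap B_2)\le c(n)\mu(B_2)r^{n-1}$, and then evaluates the potential at that point via the layer-cake formula to get $R^{\alpha,B_2}_\mu(p)\le M(n,\alpha)\mu(B_2)$. You instead average: integrating $R^{\alpha,B_2}_\mu\ge 1$ over $S^{n-1}$ and swapping by Tonelli reduces everything to the uniform bound $\sup_{y\in\overline{B_2}}P(y)<\infty$, which you dispatch with the clean elementary inequality $|x-y|^2\ge\tfrac12|x-x_*|^2$ for $|y|\in[\tfrac12,2]$. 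Your approach is self-contained and avoids the auxiliary covering lemma; the paper's approach, on the other hand, isolates a density statement (Lemma~\ref{Lem:rough-kpata}) that foreshadows the general decomposition result used later in Theorem~\ref{Thm:main-estimate}.
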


\begin{proof} It suffices to show that $C^\alpha(S^{n-1}, B_2)$ is finite and positive. Let $\sigma$ be the volume measure for the unit sphere so that the total measure of $S^{n-1}$ is $1$. First we realize 
that the potential, for $\alpha \in (1, n]$ and $x\in S^{n-1}$,
$$
R^{\alpha, B_2}_\sigma (x)  \geq m
$$
for some $m=m(n, \alpha) >0$. 
Therefore $C^{\alpha} (S^{n-1}, B_2) \leq \frac 1m <\infty$ by Definition \ref{Def:outer-capacity-euclidean}. 
To see that $C^{\alpha} (S^{n-1}, B_2) > 0$, for 
any $\mu$ on $B_2$, we use Lemma \ref{Lem:rough-kpata} below to pick up a point $p\in S^{n-1}$ such that
\eqref{Equ:mu-B-r} holds and calculate, for $\alpha\in (1, n)$,
$$
\aligned
R^{\alpha, B_2}_\mu (p) & = (n-\alpha) \int_0^\infty \mu (\{\frac 1r - \frac 13> s\} \bigcap B_2)
\frac 1{(s+\frac 13)^{n-\alpha+1}} ds  + \frac 1{3^{n-\alpha}} \mu(B_2)\\
& = (n-\alpha)\int_0^3 \mu(B_r(p)\bigcap B_2) r^{\alpha - n -1} dr + \frac 1{3^{n-\alpha}} \mu(B_2)\\ 
&\leq M (n, \alpha) \mu(B_2)
\endaligned
$$
for some $M(n, \alpha) >0$ and $r=|x-p|$. For $\alpha = n$,
$$
\aligned
R^{\alpha, B_2}_\mu (p) & =  \int_0^\infty \mu( \{\frac 3r - 1 > s\} \bigcap B_2) \frac 1{1+s} ds + \log\frac 43\mu(B_2) \\
& = \int_0^3 \mu(B_{r}(p)\bigcap B_2) \frac 1r dr + \log\frac 43\, \mu(B_2) \\
& \leq M(n, n) \mu(B_2)
\endaligned
$$
for some $M(n, n) >0$. In the above we used \cite[Theorem 8.16]{Ru}.
This implies $C^{\alpha} (S^{n-1}, B_2) \geq \frac 1{M(n, \alpha)} >0$ by 
Definition \ref{Def:outer-capacity-euclidean}. Thus the proof is complete.
\end{proof}

By the Vitali covering lemma, we prove the following fact used in the above.

\begin{lemma}\label{Lem:rough-kpata} Let $n\geq 2$ and $\mu$ be a finite nonnegative Radon measure on 
$B_2\subset R^n$. Then there is a point $p\in S^{n-1}$ such that
\begin{equation}\label{Equ:mu-B-r}
\mu(B_r(p)\bigcap B_2) \leq c(n) \mu(B_2) r^{n-1} \text{ for all $r>0$}.
\end{equation}
for some dimensional constant $c=c(n)$.
\end{lemma}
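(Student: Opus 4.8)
The plan is to argue by contradiction and reduce everything to the Vitali ($5r$-)covering lemma applied to a family of balls centered on $S^{n-1}$. We may assume $\mu(B_2)>0$, since otherwise \eqref{Equ:mu-B-r} holds trivially at every $p$. Fix a dimensional constant $c=c(n)$ to be pinned down only at the very end, and suppose, for contradiction, that \eqref{Equ:mu-B-r} fails at every $p\in S^{n-1}$; that is, for each $p\in S^{n-1}$ there is a radius $r_p>0$ with
$$
\mu(B_{r_p}(p)\cap B_2) > c(n)\,\mu(B_2)\, r_p^{n-1}.
$$
First I would observe that, enlarging $c(n)$ if necessary so that $c(n)\ge 1$, the radii $r_p$ may be taken uniformly bounded, in fact $r_p<1$: if $r_p\ge 1$ then $B_{r_p}(p)\cap B_2=B_2$, and dividing the displayed inequality by $\mu(B_2)>0$ would give $1>c(n)r_p^{n-1}\ge 1$, which is absurd.

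Next I would apply the Vitali ($5r$-)covering lemma to the family $\{B_{r_p}(p):p\in S^{n-1}\}$ of balls in $R^n$, which now have uniformly bounded radii: there is a countable, pairwise disjoint subfamily $\{B_{r_i}(p_i)\}_{i}$ with
$$
S^{n-1}\subseteq \bigcup_i B_{5r_i}(p_i).
$$
Since $S^{n-1}$ is nonempty this subfamily is nonempty. Because the balls $B_{r_i}(p_i)$ are pairwise disjoint, their intersections with $B_2$ are disjoint too, so $\sum_i \mu(B_{r_i}(p_i)\cap B_2)\le \mu(B_2)$; combining this with the failure of \eqref{Equ:mu-B-r} on each ball of the subfamily gives the strict bound
$$
\sum_i r_i^{n-1} < \frac{1}{c(n)\,\mu(B_2)}\sum_i \mu(B_{r_i}(p_i)\cap B_2)\le \frac{1}{c(n)}.
$$

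On the other hand, a spherical cap obeys the scale-invariant estimate $\mathcal H^{n-1}\big(S^{n-1}\cap B_\rho(q)\big)\le A(n)\rho^{n-1}$ for every $\rho>0$ and every $q\in S^{n-1}$: this is elementary for small $\rho$, and for $\rho\ge 1$ the cap has measure at most $\mathcal H^{n-1}(S^{n-1})\le \mathcal H^{n-1}(S^{n-1})\rho^{n-1}$. Hence, from the covering,
$$
\mathcal H^{n-1}(S^{n-1}) \le \sum_i \mathcal H^{n-1}\big(S^{n-1}\cap B_{5r_i}(p_i)\big) \le A(n)\,5^{n-1}\sum_i r_i^{n-1} < \frac{A(n)\,5^{n-1}}{c(n)}.
$$
Choosing $c(n):= \max\{1,\ A(n)\,5^{n-1}/\mathcal H^{n-1}(S^{n-1})\}$ makes the right-hand side at most $\mathcal H^{n-1}(S^{n-1})$, a contradiction. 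Therefore \eqref{Equ:mu-B-r} must hold at some $p\in S^{n-1}$ for all $r>0$, with this dimensional constant $c(n)$.

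The argument is essentially routine, and there is no serious obstacle; the only points requiring a little care are the preliminary reduction to uniformly bounded radii, which is exactly what lets the covering lemma apply, and the use of the spherical-cap bound in the scale-invariant form valid for \emph{all} $\rho$ rather than only small $\rho$. Equivalently, one may package the same computation as a weak-type inequality
$$
\mathcal H^{n-1}\Big(\big\{p\in S^{n-1}:\ \sup_{r>0} r^{1-n}\,\mu(B_r(p)\cap B_2) > \lambda\big\}\Big) \le C(n)\,\lambda^{-1}\mu(B_2)
$$
for the restriction to $S^{n-1}$ of a fractional maximal function of $\mu$, and then take $\lambda$ slightly below $C(n)\,\mu(B_2)/\mathcal H^{n-1}(S^{n-1})$ to force the exceptional set to miss some point of $S^{n-1}$.
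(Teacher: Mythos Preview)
Your argument is correct and essentially identical to the paper's: both proceed by contradiction via the Vitali covering lemma, bounding $\sum_i r_i^{n-1}$ from above by disjointness and from below by the covering of $S^{n-1}$. One minor slip: the claim that $r_p\ge 1$ forces $B_{r_p}(p)\cap B_2=B_2$ is false (for $p\in S^{n-1}$ you would need $r_p\ge 3$), but this is harmless since the trivial bound $\mu(B_{r_p}(p)\cap B_2)\le\mu(B_2)$ already yields $c(n)\,r_p^{n-1}<1$ and hence $r_p<1$ when $c(n)\ge 1$.
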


\begin{proof} For convenience, let $\mu(B_2)=1$. Assume otherwise, for any $q\in S^{n-1}$, there is
$r_q>0$ such that
$$
\mu(B_{r_q}(q)\bigcap B_2) \geq  c(n) r_q^{n-1}.
$$
Using Vitali covering lemma, we have $\{q_1, q_2, \cdots, q_k\}\subset S^{n-1}$ such that the collection of balls
$$
\{B_{r_{q_1}}(q_1), B_{r_{q_2}}(q_2), \cdots B_{r_{q_k}}(q_k)\}
$$
are disjoint but the collection of balls 
$$
\{B_{3r_{q_1}}(q_1), B_{3r_{q_2}}(q_2), \cdots B_{3r_{q_k}}(q_k)\}
$$
cover the sphere $S^{n-1}$.  Therefore, on one hand, 
$$
c(n) \sum_{i=1}^k r_{q_i}^{n-1} \leq \sum_{i=1}^k \mu(B_{r_{q_i}}(q_i)\bigcap B_2) \leq \mu(B_2) = 1.
$$
On the other hand, 
$$
|S^{n-1}| \leq \sum_{i=1}^k |B_{3r_{q_i}}(q_i)\bigcap S^{n-1}| <|S^{n-1}| c(n) \sum_{i=1}^k r_{q_i}^{n-1}
$$
when $c(n)$ is sufficiently large, where $|\cdot|$ stands for the Lebesgue measure on $S^{n-1}$. 
Therefore the lemma is proven by contradiction.
\end{proof}

Now let us introduce the geometric definition of thinness. For notions of thinness in terms of the fine topology
and Wiener criterion, readers are referred, for instance, to \cite{Mi, AH96, AG}. Let  
$$
\omega^\delta_i (p) = \{ x\in R^n: |x - p| \in [2^{-i}\delta,  2^{-i+1}\delta]\}
$$
and
$$
\Omega^\delta_i(p) = \{x\in R^n: |x - p| \in (2^{-i-1}\delta , 2^{-i+ 2} \delta)\}.
$$

\begin{definition} \label{Def:alpha-thin-euclidean} Let $E$ be a subset in the Euclidean space $R^n$ and $p\in R^n$ be a point
in $R^n$. The subset $E$ is said to be $\alpha$-thin at the point $p$ for $\alpha \in (1, n)$ if 
$$
\sum_{i\geq 1} \frac { C^{\alpha} (E\bigcap \omega^\delta_i (p), \Omega^\delta_i (p))}
{C^{\alpha} (\partial B_{2^{-i}\delta} (p), B_{2^{-i+1}\delta}(p))}
< \infty
$$  
for some small $\delta>0$.  The subset $E$ is said to be $n$-thin at $p$ if 
$$
\sum_{i\geq 1} i C^{n} (E\bigcap \omega^\delta_i(p), \Omega^\delta_i(p)) < \infty
$$
for some small $\delta > 0$.
\end{definition}

Combining Lemma \ref{Lem:outer-capacity-euclidean}-\ref{Lem:sphere-capacity-euclidean} with the above definition, we observe the following important property of $\alpha$-thin sets, inspired by \cite{AH} (see also \cite{MQ-a, MQ-g}). We recall 
Theorem \ref{Thm:intr-main-thm-2} from the introduction for readers' convenience.

\begin{theorem}\label{Thm:segment-euclidean} 
Let $E$ be a subset in the Euclidean space $R^n$ and $p\in R^n$ be a point. Suppose that $E$ is $\alpha$-thin at the 
point $p$ for $\alpha \in (1, n]$. Then there is a ray from $p$ that avoids $E$ at least within some small ball at $p$.
\end{theorem}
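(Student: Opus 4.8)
The plan is to argue by contradiction using a decomposition of the sphere of directions at $p$ into a sequence of finer and finer nets, together with the scaling and contractive properties of the outer capacity. Suppose every ray from $p$ meets $E$ arbitrarily close to $p$. Fix the small $\delta>0$ furnished by the $\alpha$-thinness hypothesis. For each $i\geq 1$, consider the radial projection (or rather, the dilation that maps the shell $\omega^\delta_i(p)$ onto the unit-width shell $\omega^\delta_0(p)$ centered at $p$, followed by a translation centered at the origin), and transport $E\cap\omega^\delta_i(p)$ to a subset $\widetilde E_i$ of a fixed annular region around $S^{n-1}$. By Lemma \ref{Lem:scaling-euclidean}, $C^\alpha(E\cap\omega^\delta_i(p),\Omega^\delta_i(p))$ equals $(2^{-i}\delta)^{n-\alpha}$ times $C^\alpha(\widetilde E_i,\widetilde\Omega)$ for the fixed dilated domain $\widetilde\Omega$, and likewise $C^\alpha(\partial B_{2^{-i}\delta}(p),B_{2^{-i+1}\delta}(p))=(2^{-i}\delta)^{n-\alpha}\,C^\alpha(S^{n-1},B_2)$. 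Hence the $\alpha$-thinness series for $\alpha\in(1,n)$ is, up to the positive constant $c(n,\alpha)=C^\alpha(S^{n-1},B_2)$ from Lemma \ref{Lem:sphere-capacity-euclidean}, exactly $\sum_i C^\alpha(\widetilde E_i,\widetilde\Omega)$, and for $\alpha=n$ the weight $i$ accounts precisely for the logarithmic scaling of the kernel; in either case $\sum_i C^\alpha(\widetilde E_i,\widetilde\Omega)<\infty$, so the terms tend to $0$.

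The second step is to use the contractive property (Lemma \ref{Lem:contractive-property}) to compare the radial ``shadow'' of $\widetilde E_i$ on $S^{n-1}$ with $\widetilde E_i$ itself. The radial projection $x\mapsto x/|x|$ onto $S^{n-1}$, restricted to a fixed annulus bounded away from $0$, is Lipschitz with a dimensional constant; rescaling coordinates by that constant turns it into a contraction of some fixed ball into itself, so Lemma \ref{Lem:contractive-property} gives $C^\alpha(\pi(\widetilde E_i),\widetilde\Omega')\leq C\,C^\alpha(\widetilde E_i,\widetilde\Omega)$ for a suitable fixed domain. Consequently $\sum_i C^\alpha(\pi(\widetilde E_i),\widetilde\Omega')<\infty$, so by countable subadditivity (Lemma \ref{Lem:outer-capacity-euclidean}(2)) the set $F=\bigcup_{i\geq N}\pi(\widetilde E_i)$ has $C^\alpha(F,\widetilde\Omega')$ as small as we like once $N$ is large; in particular $C^\alpha(F,\widetilde\Omega')<C^\alpha(S^{n-1},B_2)=c(n,\alpha)$, after matching domains by monotonicity.

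Finally, the contradiction: the assumption that every ray from $p$ hits $E$ within distance $2^{-N}\delta$ means precisely that every direction in $S^{n-1}$ lies in $\pi(\widetilde E_i)$ for some $i\geq N$ (depending on the direction), i.e. $S^{n-1}\subseteq F$ — or at least $S^{n-1}$ is covered by $\bigcup_{i\ge N}\pi(\widetilde E_i)$ up to matching the radial shells. Then monotonicity forces $C^\alpha(S^{n-1},B_2)\leq C^\alpha(F,\cdot)$, contradicting $C^\alpha(F,\cdot)<c(n,\alpha)$. Hence some direction avoids $E$ within $B_{2^{-N}\delta}(p)$, and the ray from $p$ in that direction is the desired ray.

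The main obstacle I anticipate is the bookkeeping that makes the radial-projection comparison clean: the shells $\omega^\delta_i(p)$ and the slightly larger $\Omega^\delta_i(p)$ overlap for consecutive $i$, the dilation-plus-translation normalizations must be chosen so that all the transported domains coincide (or at least nest into one fixed domain, to apply monotonicity), and the Lipschitz constant of radial projection on the normalized annulus must be absorbed into a contraction of a single fixed ball without changing the relevant capacities by more than a uniform constant. Getting the domains in Lemma \ref{Lem:contractive-property} to literally match — it requires $\Phi:\Omega\to\Omega$ with the \emph{same} $\Omega$ — will take a little care, probably by first enlarging all domains to a common $\Omega$ via monotonicity and then composing the radial projection with the identity on that larger domain. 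The $\alpha=n$ case needs the extra observation that the logarithmic kernel rescales with an additive shift, which is exactly why Definition \ref{Def:alpha-thin-euclidean} inserts the factor $i$; I would treat it as a short parallel remark rather than a separate argument.
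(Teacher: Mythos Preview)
Your proposal is correct and follows essentially the same route as the paper: scale each shell to a fixed annulus, radially project onto $S^{n-1}$ via the contractive property, sum using subadditivity, and compare against $C^\alpha(S^{n-1},B_2)$. The obstacle you flag about making the radial projection an honest contraction $\Phi:\Omega\to\Omega$ is resolved in the paper by taking $P(v)=v/|v|$ for $|v|\ge 1$ and $P(v)=v$ for $|v|<1$ --- the metric projection onto the closed unit ball, which is $1$-Lipschitz on all of $\mathbb{R}^n$ and maps $\Omega^1_0$ into itself --- so no rescaling or domain-enlargement is needed.
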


\begin{proof}  First of all, due to the translation invariance, we may simply assume $p$ is the origin of the Euclidean space. 
Then, by the scaling property of the outer capacity $C^\alpha$ in Lemma \ref{Lem:scaling-euclidean}, one 
notices that
$$
 \frac { C^{\alpha } (E\bigcap \omega^\delta_i, \Omega^\delta_i)}
{C^{\alpha} (\partial B_{2^{-i}\delta} , B_{2^{-i+1}\delta})}
 =  \frac { C^{\alpha} (S_i(E)\bigcap \omega^1_0, \Omega^1_0)}{C^{\alpha} (\partial B_1, B_2)}
$$
where the scaling map: $S_i (v) = \frac {2^{i}}\delta v$. Then we consider the projection
$$
P(v) = \left\{\aligned \frac v{|v|} \quad & \text{ when $v\in R^n$ and $|v|\geq 1$}\\
v \quad & \text{ when $v\in R^n$ and $|v| < 1$}\endaligned\right.,
$$
which is contractive. Therefore, in the light of Lemma \ref{Lem:contractive-property}, we have
$$
C^{\alpha} (P(S_i(E)\cap \omega^1_0), \Omega^1_0) \leq C^{\alpha} (S_i(E)\cap \omega^1_0, \Omega^1_0).
$$
Next, using the countable sub-additivity in Lemma \ref{Lem:outer-capacity-euclidean}, we have
$$
C^{\alpha} (\bigcup_{i\geq k} P(S_i(E)\cap \omega^1_0), \Omega^1_0)  \leq \sum_{i\geq k} 
C^{\alpha} (P(S_i(E)\cap \omega^1_0), \Omega^1_0).
$$
Thus, 
$$
\aligned
C^{\alpha} & (\bigcup_{i\geq k}  P(S_i(E)\cap \omega^1_0), \Omega^1_0)  \leq \sum_{i\geq k} 
C^{\alpha} (S_i(E)\cap\omega^1_0, \Omega^1_0)\\
& \leq C^{\alpha} (\partial B_1,  B_2) 
\sum_{i\geq k}  \frac { C^{\alpha} (S_i(E)\bigcap \omega^1_0, \Omega^1_0)}{C^{\alpha} (\partial B_1, B_2)}\\
& \leq C^{\alpha} (\partial B_1, B_2) 
\sum_{i\geq k}  \frac { C^{\alpha} (E \bigcap \omega^\delta_{i}, \Omega^\delta_{i})}{C^{\alpha}
( \partial B_{2^{-i}\delta}, B_{2^{-i+1}\delta})}
\endaligned
$$
which is arbitrarily small when $k$ is appropriately large using Lemma \ref{Lem:sphere-capacity-euclidean} 
for $C^{\alpha}(\partial B_1, B_2)$. And then this implies that
$$
\partial B_1 \setminus \bigcup_{i\geq k}  P(S_i(E)\cap \omega^1_0)\neq \emptyset.
$$
The argument for $\alpha = n$ is similar and easier. And the proof is complete.
\end{proof}


\subsection{Linear potential on manifolds}
On a given complete Riemannian manifold $(M^n, g)$, let $d(\cdot, \cdot)$ 
be the distance function associated with the given Riemannian metric $g$. 

\begin{definition} Suppose that $(M^n, g)$ is a complete Riemannian manifold and $U\subseteq M^n$ is a bounded 
open subset. For $\alpha\in (1, n]$, the linear potential on the Riemannian manifold $(M^n, g)$ of order $\alpha$ for a 
Radon measure $\mu$ on $U$ is given as
$$
\mathscr{R}^{\alpha, U}_\mu (x) = \left\{\aligned \int_U \frac 1{d(x, y)^{n-\alpha}} d\mu(y)  \quad & \text{ when 
$\alpha \in (1, n)$}\\ \int_\Omega \log \frac D{d(x, y)} d\mu(y) \quad & \text{ when $\alpha = n$}
\endaligned\right.,
$$
where $D$ is the diameter of $U$.
\end{definition} 

From the discussion in the previous subsection, it is easily seen that one may generate an outer capacity 
$\mathscr{C}^\alpha(E, U)$ for any subset $E\subseteq U\subseteq M^n$ that behaves like the counter part 
in Euclidean spaces. To use $R^{\alpha, \Omega}_\mu(x)$ and $C^{\alpha}(E, \Omega)$ on Euclidean spaces 
in the previous subsection to study $\mathscr{R}^{\alpha, U}_\mu(p)$ and $\mathscr{C}^\alpha(A, U)$ on manifolds, 
we first introduce the correspondence between Radon measures on the tangent space $T_pM^n$ at each point 
$p\in M^n$ and those on $(M^n, g)$. Suppose that $(M^n, g)$ is a complete Riemannian manifold. 
Let $p\in M^n$ and $U$ be a convex normal coordinate neighborhood at $p$ on $(M^n, g)$, where
the exponential map serves as the convex normal coordinate 
$$
\text{exp}|_p: \Omega \to U.
$$
The domain $U$ is said to be convex if the unique geodesic joining any two points in $U$ stays in $U$. Moreover, we 
may assume in the coordinate chart $U$ the exponential map be uniformly bi-Lipschitz throughout this paper.

Then, for a Radon measure $\mu$ on $U\subseteq M^n$, one may introduce the Radon measure 
$\mu^*$ on $\Omega\subset T_pM^n$ such that, for a subset $E \subseteq \Omega$, 
$$
\mu^* (E) = \mu (\text{exp}|_p E) \text{ and }  \int_\Omega f\circ\text{exp}|_p d\mu^* = \int_U f d\mu.
$$
It is then easily seen that the following equivalence between the linear potential $R^{\alpha, \Omega}_{\mu^*}$,
the outer capacities $C^{\alpha}(\cdot, \Omega)$ and the corresponding $\mathscr{R}^{\alpha, U}_{\mu}$,
$\mathscr{C}^{\alpha}(\cdot, U)$ holds. Namely,

\begin{lemma}\label{Lem:exp-T_p} Suppose that $(M^n, g)$ be a complete Riemannian manifold and $p\in M^n$. Let 
$$
\text{exp}|_p: \Omega \to U
$$
be the convex normal coordinate chart, where the exponential map is uniformly bi-Lipschitz. And let 
$\alpha \in (1, n]$. Then, for 
$A\subset U$ and $E = (\text{exp}|_p)^{-1}A \subset \Omega$,
$$
\aligned 
C^{-1} R^{\alpha, \Omega}_{\mu^*} \leq  & \mathscr{R}^{\alpha, U}_\mu \leq C R^{\alpha, \Omega}_{\mu^*}\\
C^{-1} C^\alpha(E, \Omega) \leq & \mathscr{C}^{\alpha} (A, U) \leq C C^\alpha(E, \Omega)
\endaligned
$$
for some constant $C= C(M^n, g, U, p)$. Consequently, a subset $A\subset U$ is $\alpha$-thin at $p$ if and only if
$E= (\text{exp}|_p)^{-1}(A)\subset\Omega$ is $\alpha$-thin at the origin of $T_pM^n$. 
\end{lemma}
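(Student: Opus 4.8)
The plan is to deduce everything from the single geometric fact the paper has already granted, namely that $\text{exp}|_p\colon\Omega\to U$ is $C_0$-bi-Lipschitz for some $C_0=C_0(M^n,g,U,p)\ge 1$, so that $C_0^{-1}|v-w|\le d(\text{exp}|_p v,\text{exp}|_p w)\le C_0|v-w|$ for all $v,w\in\Omega$; recall also that $d(p,\text{exp}|_p v)=|v|$, so $\text{exp}|_p$ carries the Euclidean radial shells $\omega_i^\delta(0),\Omega_i^\delta(0)$ in $T_pM^n$ exactly onto the Riemannian shells $\omega_i^\delta(p),\Omega_i^\delta(p)$. Together with the measure correspondence $\mu\mapsto\mu^*$, $\mu^*(E)=\mu(\text{exp}|_p E)$, and its inverse $\nu\mapsto\nu^\flat$, $\nu^\flat(A)=\nu((\text{exp}|_p)^{-1}A)$ — both Radon and both satisfying the change-of-variables identity recorded just before the lemma — this is all the structure the proof uses. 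I would carry it out in three steps: the pointwise potential comparison, then the capacity comparison by pushing measures back and forth, then the thinness equivalence by matching shells.

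For $\alpha\in(1,n)$ the potential comparison is immediate: writing $x=\text{exp}|_p v$, $y=\text{exp}|_p w$, bi-Lipschitzness gives $C_0^{-(n-\alpha)}|v-w|^{-(n-\alpha)}\le d(x,y)^{-(n-\alpha)}\le C_0^{n-\alpha}|v-w|^{-(n-\alpha)}$, and integrating against $d\mu=d\mu^*$ via the change-of-variables identity yields $C_0^{-(n-\alpha)}R^{\alpha,\Omega}_{\mu^*}(v)\le\mathscr{R}^{\alpha,U}_\mu(x)\le C_0^{n-\alpha}R^{\alpha,\Omega}_{\mu^*}(v)$, i.e. the first displayed inequality with $C=C_0^{n-\alpha}$. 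For the capacity comparison, if $\mu$ is admissible for $\mathscr{C}^\alpha(A,U)$, then $R^{\alpha,\Omega}_{C_0^{n-\alpha}\mu^*}\ge 1$ on $E=(\text{exp}|_p)^{-1}A$, whence $C^\alpha(E,\Omega)\le C_0^{n-\alpha}\mu^*(\Omega)=C_0^{n-\alpha}\mu(U)$; taking the infimum over $\mu$ gives $C^\alpha(E,\Omega)\le C_0^{n-\alpha}\mathscr{C}^\alpha(A,U)$, and the symmetric argument with $\nu\mapsto\nu^\flat$ gives the reverse inequality.

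The case $\alpha=n$ is where I expect the only genuine difficulty. Here the kernels carry the diameters $D_U=\operatorname{diam}_g U$ and $D_\Omega=\operatorname{diam}\Omega$, which are merely comparable ($C_0^{-1}\le D_U/D_\Omega\le C_0$), so the two logarithmic kernels differ only by the additive constant $2\log C_0$, giving $|\mathscr{R}^{n,U}_\mu(x)-R^{n,\Omega}_{\mu^*}(v)|\le 2\log C_0\cdot\mu(U)$ rather than a multiplicative bound. To recover the capacity comparison from this I would shrink $U$ so that $C_0$ is as close to $1$ as desired (the normal-coordinate metric tends to the Euclidean one), observe that in the infimum defining $\mathscr{C}^n(A,U)$ one may restrict to competing measures of mass bounded by a fixed constant — because the ambient $n$-capacities stay bounded under the shrinking, indeed $C^n$ is scale invariant by Lemma \ref{Lem:scaling-euclidean} — so that $2\log C_0\cdot\mu(U)\le\tfrac12$; then $R^{n,\Omega}_{\mu^*}\ge\tfrac12$ on $E$, so $2\mu^*$ competes for $C^n(E,\Omega)$, and conversely. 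One may alternatively note, as in \cite[Chapter 6]{Mi}, that for the purpose of $n$-thinness the relevant competitors have mass tending to $0$ with the scale, so the additive discrepancy is asymptotically negligible.

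Finally, for the ``consequently'' clause I would use the exact matching of shells: applying the capacity comparison just established on the set $\Omega_i^\delta$ shows that the numerators $\mathscr{C}^\alpha(A\cap\omega_i^\delta(p),\Omega_i^\delta(p))$ and $C^\alpha(E\cap\omega_i^\delta(0),\Omega_i^\delta(0))$ are comparable with a constant independent of $i$ (for $i$ large enough that the shells lie in $U$), and likewise for the denominators; hence the two series in Definition \ref{Def:alpha-thin-euclidean}, one on $(M^n,g)$ at $p$ and one in $T_pM^n$ at $0$, converge or diverge together, which is precisely the asserted equivalence of $\alpha$-thinness. The denominators cause no trouble: for $\alpha\in(1,n)$ they are explicit constant multiples of $(2^{-i}\delta)^{n-\alpha}$ by Lemmas \ref{Lem:scaling-euclidean} and \ref{Lem:sphere-capacity-euclidean}, and for $\alpha=n$ they are handled by the same device. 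The step I would flag as needing care is thus the $\alpha=n$ case, specifically the passage from the additive logarithmic discrepancy to a usable multiplicative comparison of capacities.
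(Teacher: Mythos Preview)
Your approach---deduce everything from the bi-Lipschitz property of $\text{exp}|_p$---is exactly the paper's; its proof is a single sentence to that effect, so you have supplied the argument the paper omits rather than departed from it. For $\alpha\in(1,n)$ your three steps (pointwise kernel comparison, push/pull of measures for the capacity inequalities, and the exact matching of radial shells $\omega_i^\delta,\Omega_i^\delta$ via $d(p,\text{exp}|_p v)=|v|$ for the thinness equivalence) are correct and complete.

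Your flag on $\alpha=n$ is apt and in fact sharper than the paper, which does not engage with it: bi-Lipschitzness gives only the additive comparison $|\mathscr{R}^{n,U}_\mu - R^{n,\Omega}_{\mu^*}|\le 2(\log C_0)\,\mu(U)$, and the displayed multiplicative bound on the potentials need not follow for arbitrary $\mu$ on a fixed $U$. Of your two remedies, shrinking $U$ does not prove the lemma as stated for the given chart. The second remedy, however, is the right one: on the shells $\Omega_i^\delta$ the competing measures for $n$-thinness have mass $o(1)$ (indeed bounded by the shell capacities, which must tend to $0$ for the series in Definition~\ref{Def:alpha-thin-euclidean} to have a chance of converging), so for $i$ large the additive defect is $\le\tfrac12$ and the two shell capacities agree up to a fixed factor. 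This yields the ``consequently'' clause, which is the only part of the $\alpha=n$ case actually invoked downstream (cf.\ Theorem~\ref{Thm:n-potential}).
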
 

\begin{proof} The proof is straightforward based on the properties of the convex normal coordinate chart at a point in a 
complete Riemannain manifold, where the exponential map is bi-Lipschtiz. 
\end{proof}


\subsection{Estimates of Riesz potential} In this subsection, we introduce our estimates of Riesz potentials on manifolds. 
We will recall some well known estimates for Riesz potential in Euclidean spaces \cite[Chapter 2]{Mi}.  

Our estimates on Riesz potentials are designed to help understand the Hausdorff dimensions of singularities of solutions of partial 
differential equations on manifolds. Let us start with a general decomposition theorem for nonnegative Radon measures on a complete Riemannian manifold based on \cite[Proposition 1.4]{Kp}, which is related to Lemma \ref{Lem:rough-kpata} and
a broad generalization of the Lebesgue Differentiation Theorem in some way.

\begin{lemma} \label{Lem:lebesgue} ( \cite[Proposition 1.4]{Kp}) 
Let $\mu$ be a nonnegative Radon measure on a complete Riemannian manifold $(M^n, g)$ and let
$$
G_d^\infty = \{x\in M^n: \limsup_{r\to 0} r^{-d} \mu(B_r(x)) = +\infty\}
$$
for any $d\in [0, n]$.
Then 
$$
\mathscr {H}_d (G^\infty_d) = 0
$$
where $\mathscr{H}_d$ is the Hausdorff measure of dimension $d$.
\end{lemma}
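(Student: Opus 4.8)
The plan is to prove that $\mathscr{H}_d(G_d^\infty) = 0$ by a Vitali-type covering argument, which is the standard route for this kind of statement and is consistent with the hint in the paper that it is ``related to Lemma \ref{Lem:rough-kpata}'' and a generalization of the Lebesgue differentiation theorem. First I would fix $\varepsilon > 0$ and reduce to a bounded situation: since $(M^n, g)$ is complete, write $G_d^\infty$ as a countable union of its intersections with balls on which $\mu$ is finite (a nonnegative Radon measure is finite on compact sets), so by countable subadditivity of $\mathscr{H}_d$ it suffices to show $\mathscr{H}_d(G_d^\infty \cap K) = 0$ for a fixed compact $K$ with $\mu(K) < \infty$. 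Fix also a large constant $\Lambda$; I would show that the subset $G_d^\Lambda := \{x : \limsup_{r\to 0} r^{-d}\mu(B_r(x)) > \Lambda\}$ has $\mathscr{H}_d$-measure bounded by $C\mu(K)/\Lambda$ (with $C$ a constant absorbing the bi-Lipschitz distortion of normal coordinates and dimensional constants), and then let $\Lambda \to \infty$; since $G_d^\infty \subseteq G_d^\Lambda$ for every $\Lambda$, this gives $\mathscr{H}_d(G_d^\infty \cap K) = 0$.

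For the core estimate: each $x \in G_d^\Lambda \cap K$ admits arbitrarily small radii $r = r_x$ with $\mu(B_r(x)) > \Lambda r^d$, and in particular we may take $r_x < \rho$ for any prescribed small $\rho > 0$, which controls the fineness of the resulting cover (this is what makes the outer-measure estimate, rather than just the measure estimate, work). I would apply the Vitali covering lemma (in the $5r$-covering form, valid in a doubling metric space, which a compact Riemannian manifold locally is) to the collection $\{B_{r_x}(x)\}_{x \in G_d^\Lambda \cap K}$ to extract a countable pairwise-disjoint subfamily $\{B_{r_i}(x_i)\}$ such that $\{B_{5r_i}(x_i)\}$ covers $G_d^\Lambda \cap K$. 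Then
$$
\mathscr{H}_d^{10\rho}(G_d^\Lambda \cap K) \leq \sum_i (10 r_i)^d \leq \frac{10^d}{\Lambda} \sum_i \mu(B_{r_i}(x_i)) \leq \frac{10^d}{\Lambda} \mu(K'),
$$
where $K'$ is a fixed slightly enlarged compact neighborhood of $K$ containing all the $B_{r_i}(x_i)$ once $\rho$ is small (disjointness is used in the last inequality), and $\mathscr{H}_d^{10\rho}$ denotes the pre-Hausdorff measure at scale $10\rho$. Letting $\rho \to 0$ gives $\mathscr{H}_d(G_d^\Lambda \cap K) \leq 10^d \mu(K')/\Lambda$, and then letting $\Lambda \to \infty$ finishes the argument.

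I would note two technical points that need care but are routine on a complete Riemannian manifold. First, the Vitali/Besicovitch machinery and the comparison of $\sum r_i^d$ with the Hausdorff measure require a uniform local doubling property and comparability of metric balls with Euclidean balls in normal coordinates; since $K$ is compact this holds with constants depending only on $(M^n, g)$ and $K$, and this is exactly where the bi-Lipschitz normal coordinate charts used throughout the paper enter. Second, one should check the $\limsup$ definition is handled correctly: for each fixed $\rho$, every point of $G_d^\Lambda$ genuinely has a witnessing radius below $\rho$ because the condition involves $\limsup_{r\to 0}$, so there is no issue extracting a fine cover.

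The main obstacle, such as it is, is bookkeeping rather than conceptual: one must set up the enlarged compact sets $K \subseteq K'$ and the scale parameter $\rho$ so that all the extracted balls $B_{r_i}(x_i)$ lie inside $K'$ (so that disjointness yields $\sum_i \mu(B_{r_i}(x_i)) \leq \mu(K')$) while keeping the covering balls $B_{5r_i}(x_i)$ small enough to give a genuine Hausdorff pre-measure bound; since we may take all $r_i < \rho$ this is automatic for $\rho$ small. Alternatively, one could invoke \cite[Proposition 1.4]{Kp} essentially verbatim after transferring $\mu$ to Euclidean charts via the bi-Lipschitz exponential map, using that bi-Lipschitz maps distort both Hausdorff measures and the densities $r^{-d}\mu(B_r(x))$ only by bounded factors, so that $G_d^\infty$ and its Euclidean image have the same property; I would present the direct covering proof above as it is short and self-contained.
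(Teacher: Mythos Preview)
Your proof is correct. The direct Vitali $5r$-covering argument you give is the standard density-theorem proof and works as written; the reductions to compact $K$, the choice of witnessing radii $r_x < \rho$ from the $\limsup$ condition, the disjoint subfamily bound $\sum_i \mu(B_{r_i}(x_i)) \leq \mu(K')$, and the passage $\rho \to 0$ followed by $\Lambda \to \infty$ are all in order.

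However, the paper does \emph{not} reprove this from scratch. Its proof is precisely the alternative you mention in your last paragraph: it quotes the Euclidean statement \cite[Proposition~1.4]{Kp} as a black box, transfers $\mu$ and the density condition through the bi-Lipschitz exponential map via Lemma~\ref{Lem:exp-T_p}, and then patches together a countable cover of $(M^n,g)$ by convex normal coordinate charts. So your main write-up takes a genuinely different (more elementary and self-contained) route, essentially unpacking what \cite{Kp} would do anyway, while the paper's argument is shorter but depends on the external reference and on Lemma~\ref{Lem:exp-T_p}. Either is perfectly adequate here; if you want to match the paper, lead with the chart-transfer argument and relegate the Vitali computation to a remark.
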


\begin{proof} Based on the general decomposition theorem \cite[Proposition 1.4]{Kp} on the Euclidean space and  the 
correspondence of Radon measures in Lemma \ref{Lem:exp-T_p}, this lemma is easily seen. Specifically, we first 
prove the statement for Radon measures supported in a convex normal coordinate chart used in 
Lemma \ref{Lem:exp-T_p}. Then the lemma follows by using a countable covering for $(M, g)$ by convex normal 
coordinate charts.
\end{proof} 

Now we are ready to state and prove one crucial analytic result in this paper on the behavior of the Riesz potentials.
For readers' convenience, we recall Theorem \ref{Thm:intr-main-thm-1} from the introduction.

\begin{theorem} \label{Thm:main-estimate}
Suppose that $(M^n, g)$ is a complete Riemannian manifold and $\mu$ is a finite Radon measure on a bounded 
domain $G \subset M^n$. Let $S$ be a compact subset in $G$ such that its Hausdorff dimension is greater than $d$. 
And let $\alpha \in (1, n)$ and 
$d < n-\alpha$. Then there is a point $p\in S$ and a subset $E$ that is $\alpha$-thin at $p$ such that 
$$
\int_{G} \frac 1{d(x, y)^{n-\alpha}} d\mu \leq \frac C{d(x, p)^{n-\alpha - d}}
$$
for some constant $C$ and all $x\in B_\delta (p) \setminus E$ for some $\delta > 0$.
\end{theorem}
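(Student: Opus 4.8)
The plan is to combine the decomposition Lemma~\ref{Lem:lebesgue} with a multi-scale (dyadic) analysis of the potential carried out in a convex normal coordinate chart at a carefully chosen point. The one genuinely essential use of the hypothesis $\text{dim}_{\mathscr{H}}(S)>d$ (as opposed to merely $\mathscr{H}_d(S)>0$) is this: I would fix $d''$ with $d<d''<\min(n-\alpha,\text{dim}_{\mathscr{H}}(S))$, which is possible since $d<n-\alpha$ and $d<\text{dim}_{\mathscr{H}}(S)$. Then $\mathscr{H}_{d''}(S)=+\infty$ while $\mathscr{H}_{d''}(G^\infty_{d''})=0$ by Lemma~\ref{Lem:lebesgue}, so I may pick $p\in S\setminus G^\infty_{d''}$, i.e.\ a point with $\mu(B_r(p))\le C_0 r^{d''}$ for all $r\le r_0$, for some $C_0,r_0>0$. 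Because $d''>d$, this is a density bound strictly better than $r^d$, and exactly this small gain $d''-d>0$ is what will make the exceptional set $\alpha$-thin.

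Next I would localize. Passing to a convex normal coordinate chart $\text{exp}|_p\colon \Omega\to U$ at $p$ (Lemma~\ref{Lem:exp-T_p}), choosing $\delta$ small so that $B_{2\delta}(p)\subseteq U$ and $2\delta\le r_0$, and — since the finite measure $\mu$ charges only countably many spheres centered at $p$ — perturbing $\delta$ slightly so that none of the spheres $\{d(y,p)=2^{-i}\delta\cdot 2^{k}(p)\}$ with $k\in\{-1,0,1,2\}$ carries $\mu$-mass, it suffices, by the bi-Lipschitz equivalences in Lemma~\ref{Lem:exp-T_p} and because the part of $\mu$ living outside $U$ contributes at most a fixed constant to the potential on $B_\delta(p)$, to prove the Euclidean estimate: $\mu$ a finite measure supported in $U\subseteq R^n$, $p$ the origin, $\mu(B_r(0))\le C_0' r^{d''}$ for $r\le\delta$. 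For $\rho_i=2^{-i}\delta$ and $x$ with $|x|=\rho\in[\rho_i,2\rho_i]$ I split the integral over the three regions $\{|y|<\rho/2\}$, $\{|y|>2\rho\}$ and the annulus $\{\rho/2\le |y|\le 2\rho\}$, which (after the perturbation) is contained $\mu$-a.e.\ in $\Omega^\delta_i(0)$. On the first region $|x-y|>\rho/2$ and $\mu(B_{\rho/2}(0))\le C_0'(\rho/2)^{d''}$; on the second $|x-y|>|y|/2$ and a routine dyadic/layer-cake computation from $\mu(B_r(0))\le C_0' r^{d''}$ (here $d''<n-\alpha$ enters) together with $\mu(U)<\infty$ controls the term. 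Both are $\le C\rho^{-(n-\alpha-d'')}\le C\rho^{-(n-\alpha-d)}$, so for every $x\in\omega^\delta_i(0)$,
\[
\int_{G}\frac{d\mu(y)}{|x-y|^{n-\alpha}}\ \le\ C\,\rho^{-(n-\alpha-d)}\ +\ R^{\alpha,\Omega^\delta_i(0)}_{\mu}(x).
\]

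For the remaining annular term I would use scaling. Rescaling by $v\mapsto v/\rho_i$ and letting $\nu_i$ be the image of $\mu$ restricted to $\Omega^\delta_i(0)$ — so $\nu_i$ is supported in $\Omega^1_0(0)$ with total mass $m_i=\mu(\Omega^\delta_i(0))\le C_0'(4\rho_i)^{d''}$ — the scaling identity behind Lemma~\ref{Lem:scaling-euclidean} gives $R^{\alpha,\Omega^\delta_i(0)}_{\mu}(x)=\rho_i^{-(n-\alpha)}R^{\alpha,\Omega^1_0(0)}_{\nu_i}(x/\rho_i)$. Fixing the same (large) constant $C$ as above, set
\[
F_i=\bigl\{\,w\in\omega^1_0(0):\ R^{\alpha,\Omega^1_0(0)}_{\nu_i}(w)>C\rho_i^{d}\,\bigr\}.
\]
Since $\nu_i/(C\rho_i^d)$ is admissible in Definition~\ref{Def:outer-capacity-euclidean} for $C^\alpha(F_i,\Omega^1_0(0))$, one gets the capacitary weak-type bound
\[
C^\alpha\bigl(F_i,\Omega^1_0(0)\bigr)\ \le\ \frac{m_i}{C\rho_i^d}\ \le\ \frac{C_0'4^{d''}}{C}\,\rho_i^{\,d''-d},
\]
while for $x\in\omega^\delta_i(0)$ with $x/\rho_i\notin F_i$ the annular term is $\le C\rho_i^{-(n-\alpha-d)}$. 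Taking $E_i=\rho_i F_i\subseteq\omega^\delta_i(0)$, setting $E=\{p\}\cup\bigcup_{i\ge 1}E_i$ and pushing it forward by $\text{exp}|_p$, the displayed inequality above yields \eqref{Equ:main-estimate-riesz-intr} for all $x\in B_\delta(p)\setminus E$ (note $\rho\asymp\rho_i\asymp d(x,p)$, and $d(x,y)$ is comparable to the Euclidean distance).

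It remains to check that $E$ is $\alpha$-thin at $p$, which I regard as the main point of the argument. By Lemma~\ref{Lem:scaling-euclidean} and the finiteness and positivity of $C^\alpha(\partial B_1,B_2)=c(n,\alpha)$ (Lemma~\ref{Lem:sphere-capacity-euclidean}) one has $C^\alpha(E_i,\Omega^\delta_i(p))=\rho_i^{\,n-\alpha}C^\alpha(F_i,\Omega^1_0(0))$ and $C^\alpha(\partial B_{2^{-i}\delta}(p),B_{2^{-i+1}\delta}(p))=c(n,\alpha)\rho_i^{\,n-\alpha}$; since the dyadic annuli overlap boundedly (so $E\cap\omega^\delta_i(p)$ meets only $E_{i-1},E_i,E_{i+1}$) and capacities over comparable domains are comparable, monotonicity and countable subadditivity (Lemma~\ref{Lem:outer-capacity-euclidean}) give
\[
\sum_{i\ge 1}\frac{C^\alpha\bigl(E\cap\omega^\delta_i(p),\,\Omega^\delta_i(p)\bigr)}{C^\alpha\bigl(\partial B_{2^{-i}\delta}(p),\,B_{2^{-i+1}\delta}(p)\bigr)}\ \le\ C\sum_{i\ge 1}(2^{-i}\delta)^{d''-d}\ <\ \infty,
\]
the series converging precisely because $d''-d>0$; then $\alpha$-thinness transfers back from $T_pM^n$ to $M^n$ by Lemma~\ref{Lem:exp-T_p}. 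The hard part is thus not any single estimate but the recognition that the per-scale exceptional sets $E_i$ assemble into a thin set only after one spends the strict dimension inequality to gain the exponent $d''>d$, turning a borderline (merely bounded) estimate for $C^\alpha(F_i,\Omega^1_0(0))$ into a convergent geometric series; the remaining points — boundary spheres, comparison of capacities over slightly enlarged domains, and the manifold-versus-Euclidean dictionary — are routine and handled by the perturbation of $\delta$, Lemma~\ref{Lem:exp-T_p}, and the subadditivity in Lemma~\ref{Lem:outer-capacity-euclidean}.
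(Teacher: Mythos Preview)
Your proof is correct and follows essentially the same approach as the paper's: your parameter $d''$ is the paper's $d+\epsilon$, your dyadic splitting and capacity weak-type bound on the exceptional sets $E_i$ match the paper's treatment of its terms $I$, $II$, $III$ and its sets $E_i^\lambda$, and the convergence of the thinness series via the exponent gap $d''-d>0$ is exactly the paper's $2^{-i\epsilon}$ series. The only cosmetic differences are that you rescale explicitly to unit scale before defining $F_i$ (the paper works directly at scale $i$) and you add a harmless perturbation of $\delta$ to avoid boundary spheres, which the paper handles implicitly by the way it splits the domain.
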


\begin{proof} First, due to the assumption that the Hausdorff dimension of $S$ is greater than $d$, 
$$
\mathscr{H}_{d+\epsilon}(S) = \infty
$$
for some small $\epsilon >0$. Then, in the light of Lemma \ref{Lem:lebesgue}, there is a point $p\in S$ such that
$$
\limsup_{r\to 0} r^{-(d+\epsilon)} \mu(B_r(p)) \leq C < \infty.
$$
That is to say
\begin{equation}\label{Equ:mu-on-ball}
\mu(B_r(p)) \leq C r^{d+\epsilon}
\end{equation}
when $r$ is appropriately small. Secondly, we may confine ourselves to a convex normal coordinate neighborhood $U$ of $p$ and 
we may work on the Euclidean space without the loss of generality in the light of the discussion in the previous subsection, particularly, Lemma \ref{Lem:exp-T_p}, where $\text{exp}|_p: \Omega\to U$ and $\text{exp}|_p (0) = p$. For convenience, we will not 
differentiate $\mu$ and $\mu^*$ if no confusion rises. 
Therefore, for $x\in \omega_i^\delta\subset \Omega$ when $\delta$ is sufficiently small and $i$ is appropriately large, 
\begin{equation}\label{Equ:calculation-1}
\aligned
& R^{\alpha, \Omega}_\mu (x)  = \int_\Omega \frac 1{|x - y|^{n-\alpha}} d\mu\\
& = \int_{\Omega\setminus B_{2^{-i_0+2}\delta} } \frac 1{|x - y|^{n-\alpha}} d\mu + \int_{B_{2^{-i_0+2}\delta} \setminus 
\Omega^\delta_i}  \frac 1{|x - y|^{n-\alpha}} d\mu + \int_{\Omega^\delta_i} \frac 1{|x - y|^{n-\alpha}} d\mu,
\endaligned
\end{equation}
where $i_0 \leq i$ to be fixed. For the first term in the right hand side of \eqref{Equ:calculation-1}, 
$$
I = \int_{\Omega\setminus B_{2^{-i_0+2}\delta}} \frac 1{|x - y|^{n-\alpha}} d\mu \leq
(\frac 1{(2^{-i_0+2}\delta - 2^{-i+1}\delta})^{n-\alpha} \mu(\Omega)
\leq (\frac 1{2^{-i_0+1}\delta})^{n-\alpha} \mu(\Omega).
$$
Recall that $2^{-i}\delta \leq |x| \leq 2^{-i +1}\delta$ for $x\in \omega_i$, we have
\begin{equation}\label{Equ:term-I}
I  \leq \mu(\Omega) \frac {(2^{-i+1}\delta)^{n-\alpha-d}}{(2^{-i_0+1}\delta)^{n-\alpha}} \frac 1{|x|^{n-\alpha - d}}
\leq C \frac 1{|x|^{n-\alpha-d}}
\end{equation}
where $C = C(n, \alpha, d, \delta, i_0)$. For the second term in the right hand side of \eqref{Equ:calculation-1},
$$
\aligned
& \int_{B_{2^{-i_0+2}\delta} \setminus \Omega_i^\delta} \frac 1{|x - y|^{n-\alpha}} d\mu = 
\int_{B_{2^{-i_0+2}\delta} \setminus B_{2^{-i+2}\delta}}  \frac 1{|x - y|^{n-\alpha}} d\mu +
\int_{B_{2^{-i - 1}\delta}}  \frac 1{|x - y|^{n-\alpha}} d\mu \\
& \leq \int_{B_{2^{-i_0+2}\delta} \setminus B_{2^{-i+2}\delta}}  \frac 1{|x - y|^{n-\alpha}} d\mu +
(\frac 1{2^{-i-1}\delta})^{n-\alpha}\mu(B_{2^{-i-1}\delta}) \\
& \leq \sum_{k=i_0}^{i-1} \int_{B_{2^{-k+2}}\delta \setminus B_{2^{-k+1}\delta}}  \frac 1{|x - y|^{n-\alpha}} d\mu 
+ (\frac 1{2^{-i-1}\delta})^{n-\alpha} \mu(B_{2^{-i-1}\delta})\\
& \leq \sum_{k=i_0}^{i-1} (\frac 1{2^{-k}\delta})^{n-\alpha} \mu(B_{2^{-k+2}\delta}) 
+ (\frac 1{2^{-i-1}\delta})^{n-\alpha} \mu(B_{2^{-i-1}\delta}).
\endaligned
$$
Using \eqref{Equ:mu-on-ball} for $\epsilon = 0$, we continue from the above,
\begin{equation}\label{Equ:term-II}
\aligned
II & \leq  C  (4^{d} \sum_{k=i_0}^{i-1} (\frac 1{2^{-k}\delta})^{n-\alpha - d}
+ (\frac 1{2^{-i-1}\delta})^{n-\alpha - d})\\
& \leq C(\frac {4^{d}}{1 - 2^{-(n-\alpha - d)}} (\frac 1{2^{-i + 1}\delta})^{n-\alpha-d}
+  (\frac 1{2^{-i-1}\delta})^{n-\alpha - d})\\
& \leq C \frac 1{|x|^{n-\alpha - d}},
\endaligned
\end{equation}
where $C = C(n, \alpha, d, \delta, i_0)$. To handle the third term in the right hand side of \eqref{Equ:calculation-1}, we let
$$
E_i^\lambda = \{x\in \omega^\delta_i: \int_{\Omega^\delta_i} \frac 1{|x - y|^{n-\alpha}} d\mu \geq \lambda 2^{i(n-\alpha - d)}\},
$$
where $\lambda >0$ to be fixed. By Definition \ref{Def:outer-capacity-euclidean}, we know
$$
C^\alpha (E^\lambda_i, \Omega^\delta_i) \leq \frac {\mu(\Omega^\delta_i)}{\lambda 2^{i(n-\alpha-d)}}
\leq \frac  C\lambda \frac {(2^{-i+2}\delta)^{d+\epsilon}}{2^{i(n-\alpha - d)}} = \frac {C4^{d+\epsilon}}\lambda 2^{-i\epsilon}
(2^{-i})^{n-\alpha},
$$
where \eqref{Equ:mu-on-ball} for some $\epsilon > 0$ is used and $\Omega^\delta_i \subset B_{2^{-i+2}\delta}$. 
Now, from Lemma \ref{Lem:sphere-capacity-euclidean} and the scaling property, we know
$$
C^\alpha (\partial B_{2^{-i}\delta}, B_{2^{-i +1}\delta}) = C(n, \alpha) (2^{-i}\delta)^{n-\alpha}
$$
and
$$
\sum_{i\geq i_0} \frac {C^\alpha(E^\lambda_i, \Omega^\delta_i)}{C^\alpha(\partial B_{2^{-i}\delta}, B_{2^{-i+1}\delta})}
\leq \frac C\lambda \sum_{i\geq i_0} 2^{-\epsilon i} <\infty.
$$
Thus, by Definition \ref{Def:alpha-thin-euclidean}, the proof is completed.
\end{proof}

As a consequence of Theorem \ref{Thm:segment-euclidean} and \ref{Thm:main-estimate}, we have

\begin{corollary} Suppose that $(M^n, g)$ is a complete Riemannian manifold and $\mu$ is a finite 
Radon measure on a bounded 
domain $G \subset M^n$. Let $S$ be a compact subset in $G$ such that its Hausdorff dimension is greater than $d$. 
And let $\alpha \in (1, n)$ and 
$d < n-\alpha$. Then there is a point $p\in S$ and a subset $E$ such that 
$$
\int_{G} \frac 1{d(x, y)^{n-\alpha}} d\mu \leq \frac C{d(x, p)^{n-\alpha - d}}
$$
for some constant $C$ and for all $x$ along a geodesic ray from $p$ at least within a small geodesic ball. 
\end{corollary}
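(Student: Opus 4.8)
The plan is to read the conclusion off the two main theorems with essentially no extra work. Theorem \ref{Thm:main-estimate} already supplies a point $p\in S$, a constant $C$, a radius $\delta>0$, and a subset $E$ that is $\alpha$-thin at $p$ for which
$$
\int_G \frac{1}{d(x,y)^{n-\alpha}}\,d\mu \leq \frac{C}{d(x,p)^{n-\alpha-d}} \qquad \text{for all } x\in B_\delta(p)\setminus E.
$$
Hence the only content of the corollary beyond Theorem \ref{Thm:main-estimate} is the production of a single geodesic ray issuing from $p$ that stays in the complement of $E$ throughout a sufficiently small geodesic ball; along such a ray the displayed estimate then holds automatically.

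To produce the ray I would transfer the question to the tangent space at $p$. Shrinking $\delta$ if necessary, fix a convex normal coordinate chart $\text{exp}|_p: \Omega\to U$ with $\text{exp}|_p(0)=p$ and $B_\delta(p)\subset U$, in which $\text{exp}|_p$ is uniformly bi-Lipschitz. Put $\widetilde E=(\text{exp}|_p)^{-1}(E\cap U)\subset\Omega$. By Lemma \ref{Lem:exp-T_p}, $E$ being $\alpha$-thin at $p$ is equivalent to $\widetilde E$ being $\alpha$-thin at the origin of $T_pM^n\cong R^n$, so Theorem \ref{Thm:segment-euclidean} applies to $\widetilde E$ and yields a unit vector $v\in T_pM^n$ together with a radius $\rho>0$ such that the ray $\{tv:t\geq 0\}$ is disjoint from $\widetilde E\cap B_\rho(0)$.

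Finally I would push the ray forward. The curve $\gamma(t)=\text{exp}|_p(tv)$ is the radial unit-speed geodesic from $p$ in the direction $v$, and since $\text{exp}|_p$ is a bijection of the chart carrying $\widetilde E$ onto $E\cap U$, the image $\gamma$ misses $E$ for every $t$ with $tv\in B_\rho(0)$. Choosing $\delta'\leq\delta$ small enough that $\{tv:0\leq t\leq\delta'\}\subset B_\rho(0)\cap\Omega$ and $\gamma([0,\delta'])\subset B_\delta(p)$, one gets $\gamma(t)\in B_{\delta'}(p)\setminus E$ for $0<t\leq\delta'$; feeding these points into the estimate from Theorem \ref{Thm:main-estimate} completes the proof.

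I do not anticipate a genuine obstacle here; the one point that requires care is the passage between the Euclidean model and the manifold, namely that an $\alpha$-thin set and an avoiding ray on $T_pM^n$ correspond to an $\alpha$-thin set and an avoiding \emph{geodesic} ray on $M^n$. This is exactly what Lemma \ref{Lem:exp-T_p} gives for thinness, combined with the elementary fact that radial lines in normal coordinates map to geodesics, while the bi-Lipschitz control on $\text{exp}|_p$ ensures that the qualifier of holding only within a small geodesic ball is preserved under the transfer.
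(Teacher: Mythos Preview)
Your proposal is correct and follows exactly the route the paper intends: the corollary is stated there simply ``as a consequence of Theorem \ref{Thm:segment-euclidean} and \ref{Thm:main-estimate}'' with no further proof, and your write-up supplies precisely the details implicit in that remark, including the transfer to $T_pM^n$ via Lemma \ref{Lem:exp-T_p} and the fact that radial lines in normal coordinates are geodesics.
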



\subsection{Estimates of Log potential}  First, as stated in \cite[Theorem 6.3]{Mi}, for the Log potential $U_n\mu (x)$ 
on Euclidean spaces defined in \cite[page 82]{Mi}, 
$$
\lim_{x\to p \text{ and } x\in \Omega\setminus E} \frac {U_n\mu(x)}{\log \frac 1{|x-p|}} = \mu(\{p\}).
$$

The following is our version of \cite[Theorem 6.3]{Mi} on manifolds. For us it is a generalization of \cite[Theorem 1.3]{AH} in higher dimensions and linear version of such behaviors for $n$-superharmonic functions (cf. \cite{Hu, BMQ-s, BMQ-r, MQ-a, MQ-g}). For convenience, we present a brief but full proof based on the potential theory
developed in previous subsections in this paper. 

\begin{theorem}\label{Thm:n-potential} Suppose $(M^n, g)$ is a complete Riemannian manifold.   
Let $\mu$ be a finite Radon measure on a bounded domain $G\subset M^n$.  
Then, for $p\in G$,  there is a subset $A$ that is n-thin at $p$ and
$$
\lim_{x \to p \text{ and } x \in M^n\setminus A} \frac {\int_{G} \log \frac 1{d(x, p)} d\mu(x)}{\log \frac 1{d(x, p)}} = \mu(\{p\}).
$$
\end{theorem}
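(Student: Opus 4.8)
The plan is to reduce, via Lemma~\ref{Lem:exp-T_p}, to the Euclidean setting and mimic the structure of the proof of \cite[Theorem 6.3]{Mi}, splitting $\mu$ into the atom at $p$ and the rest. First I would write $\mu = \mu(\{p\})\delta_p + \nu$, where $\nu$ is the restriction of $\mu$ to $G \setminus \{p\}$; note $\nu$ is still a finite nonnegative Radon measure and $\nu(\{p\}) = 0$. The contribution of the atom is exactly $\mu(\{p\})\log\frac{D}{d(x,p)}$, and dividing by $\log\frac{1}{d(x,p)}$ this tends to $\mu(\{p\})$ as $x \to p$ regardless of the approach path, since $\log D$ is bounded while $\log\frac1{d(x,p)} \to \infty$. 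So the whole problem is to show that the $\nu$-potential $\mathscr{R}^{n,G}_\nu(x) = \int_G \log\frac{D}{d(x,y)}\,d\nu(y)$ satisfies
$$
\lim_{x\to p,\ x\notin A} \frac{\mathscr{R}^{n,G}_\nu(x)}{\log\frac1{d(x,p)}} = 0
$$
for a suitable $n$-thin set $A$.

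For the $\nu$-part, I would work in a convex normal chart $\exp|_p:\Omega\to U$ and pass to $\nu^*$ on $\Omega\subset\mathbb R^n$; by the bi-Lipschitz equivalence in Lemma~\ref{Lem:exp-T_p} it suffices to bound $R^{n,\Omega}_{\nu^*}$. The key is a dyadic decomposition at the annuli $\omega^\delta_i(p)$: for $x\in\omega^\delta_i$, split $\int_\Omega \log\frac{D}{|x-y|}\,d\nu^*$ into the part over $y$ far from $p$ (where $\log\frac D{|x-y|}$ is bounded, contributing $O(1)$, hence $o(\log\frac1{|x|})$), the part over annuli $\omega^\delta_k$ with $k$ much smaller than $i$, and the dangerous part over $\Omega^\delta_i$. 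Since $\nu$ has no atom at $p$, $\nu^*(B_r(0))\to 0$ as $r\to 0$, and a summation-by-parts / layer-cake computation like the one in the proof of Lemma~\ref{Lem:sphere-capacity-euclidean} shows the contribution of the intermediate annuli is $o(\log\frac1{|x|})$ as well (the total mass near $p$ being small forces the $\log$-weighted sum to be $\varepsilon\log\frac1{|x|}$ for $i$ large). This leaves only the self-annulus term, which I handle by the capacity/thinness mechanism: set
$$
A_i^\lambda = \Bigl\{x\in\omega^\delta_i:\ \int_{\Omega^\delta_i}\log\frac1{|x-y|}\,d\nu^* \ge \lambda\, i\Bigr\},
$$
and use $R^{n,\Omega^\delta_i}_{\nu^*}(x)\ge 1$ type bounds together with Definition~\ref{Def:outer-capacity-euclidean} to estimate $C^n(A_i^\lambda,\Omega^\delta_i)\le \frac1{\lambda i}\,\nu^*(\Omega^\delta_i)$, so that $\sum_i i\,C^n(A_i^\lambda,\Omega^\delta_i)\le \frac1\lambda\sum_i \nu^*(\Omega^\delta_i)<\infty$ because $\{\Omega^\delta_i\}$ have bounded overlap and $\sum_i\nu^*(\Omega^\delta_i)\lesssim \nu^*(\Omega)<\infty$. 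Hence $A = \bigcup_i A_i^\lambda$ is $n$-thin at $p$ by Definition~\ref{Def:alpha-thin-euclidean}, and off $A$ the self-annulus term is $\le\lambda\, i \approx \lambda\log_2\frac1{|x|}$, which is $\le C\lambda\log\frac1{|x|}$; letting $\lambda\to0$ through a sequence and taking the union of the corresponding thin sets (still $n$-thin by countable subadditivity of $C^n$) gives the limit $0$.

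Finally I would reassemble: with $A$ the $n$-thin set just constructed (pulled back to $M^n$ via $\exp|_p$, still $n$-thin by Lemma~\ref{Lem:exp-T_p}), for $x\to p$ with $x\notin A$ we have $\mathscr{R}^{n,G}_\mu(x) = \mu(\{p\})\log\frac D{d(x,p)} + \mathscr{R}^{n,G}_\nu(x)$, and dividing by $\log\frac1{d(x,p)}$ the first term tends to $\mu(\{p\})$ and the second to $0$, which is the claim. The main obstacle I anticipate is the careful bookkeeping for the intermediate annuli: one must verify that the atomlessness of $\nu$ at $p$ (equivalently $\nu^*(B_r)\to0$) really upgrades to the $o(\log\frac1{|x|})$ decay of $\sum_{k<i}(\text{bounded weight})\cdot\nu^*(\omega^\delta_k)$ uniformly, and that the $\Omega^\delta_i$ overlap constant does not interfere with summability; the capacity estimate on the self-annulus is the same one already used in Theorem~\ref{Thm:main-estimate} and should go through verbatim with $\alpha = n$.
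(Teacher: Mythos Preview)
Your overall strategy is the paper's: reduce to a Euclidean chart via Lemma~\ref{Lem:exp-T_p}, split the potential for $x\in\omega^\delta_i$ into a far piece, an intermediate-annuli piece, and a self-annulus piece over $\Omega^\delta_i$, and control the last by a capacity estimate. Splitting off the atom $\mu(\{p\})\delta_p$ at the outset is a harmless reorganization (the paper instead lets $\mu(\{p\})$ emerge from the inner-ball part of term~II), and your treatment of the far and intermediate pieces matches the paper's.

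The genuine gap is your final step, ``letting $\lambda\to0$ through a sequence and taking the union of the corresponding thin sets (still $n$-thin by countable subadditivity of $C^n$).'' A countable union of $n$-thin sets need not be $n$-thin: subadditivity only gives
\[
\sum_i i\,C^n\Bigl(\bigcup_k A_i^{\lambda_k},\Omega^\delta_i\Bigr)\ \le\ \sum_k \frac1{\lambda_k}\sum_i \nu^*(\Omega^\delta_i),
\]
which diverges once $\lambda_k\to0$. Worse, for small $\delta$ one has $\log\frac1{|x-y|}>0$ for all $x\in\omega^\delta_i$, $y\in\Omega^\delta_i$, so $\bigcup_k A_i^{\lambda_k}$ equals all of $\omega^\delta_i$ whenever $\nu^*(\Omega^\delta_i)>0$; your exceptional set then swallows the entire punctured neighborhood and the conclusion is vacuous.

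The paper's repair is to let the threshold depend on the scale: choose a single sequence $\lambda_i\to0$ slowly enough that $\sum_i \nu^*(\Omega^\delta_i)/\lambda_i<\infty$, which is possible by the du~Bois-Reymond theorem since $\sum_i\nu^*(\Omega^\delta_i)<\infty$. Then $A=\bigcup_i A_i^{\lambda_i}$ is $n$-thin by construction, and off $A$ the self-annulus term is at most $\lambda_i\,i=o(i)=o(\log\frac1{|x|})$, giving the limit~$0$ directly. An equivalent diagonal fix is to use $\lambda_k$ only on annuli $i\ge i_k$ with $i_k$ chosen so the tail sums are $<2^{-k}$; either way, the essential point you are missing is that the threshold must vary with the dyadic scale, not be sent to zero uniformly.
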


\begin{proof} Let 
$$
\text{exp}|_p : \Omega\to U
$$
be a convex normal coordinate at $p\in M^n$. Clearly, it suffices to show that, there is a subset $A$ in $U$, which is 
$n$-thin at $p$, such that 
\begin{equation}\label{Equ:log-potential-fine}
\lim_{x \to p \text{ and } x \in U \setminus A} \frac {\mathscr{R}^{n, U}_\mu (x)}{\log \frac 1{d(x, p)}} = \mu(\{p\}).
\end{equation}
Therefore, for $x\in \omega^\delta_i (p)$, we write
\begin{equation}\label{Equ:calculation-2}
\aligned
& \mathscr{R}^{n, U}_\mu (x)  = \int_U \log \frac D{d(x, y)} d\mu (y)\\
& = \int_{U \setminus B_{2^{-i_0+2}\delta} } \log \frac D{d(x, y)} d\mu  + \int_{B_{2^{-i_0+2}\delta} \setminus \Omega^\delta_i} 
\log \frac D{d(x, y)} d\mu  + \int_{\Omega^\delta_i} \log \frac D{d(x, y)} d\mu.
\endaligned
\end{equation}
Here we omit the center $p$ for each ball or annulus for simplicity. For the first term in the right hand side of \eqref{Equ:calculation-2}, 
\begin{equation}\label{Equ:term-I-2}
I = \int_{U\setminus B_{2^{-i_0+2}\delta}} \log\frac D{d(x, y)} d\mu \leq
 \mu(U) \log \frac D{2^{-i_0 +1}\delta} = o(1)\log\frac 1{d(x, p)} \text{ as $x\to p$}.
\end{equation}
For the second term in the right hand side of \eqref{Equ:calculation-2},
$$
\aligned
& \int_{B_{2^{-i_0+2}\delta} \setminus \Omega_i^\delta} \log\frac D{d(x, y)} d\mu (y) = 
\int_{B_{2^{-i_0+2}\delta} \setminus B_{2^{-i+1}\delta}} \log\frac D{d(x, y)} d\mu +
\int_{B_{2^{-i - 2}\delta}} \log\frac D{d(x, y)} d\mu \\
& \leq C [\sum_{k=i_0}^{i} k \mu(B_{2^{-k+2}\delta} \setminus B_{2^{-k +1}\delta}) ]
+ \mu(B_{2^{-i-2}\delta}) \log\frac D{2^{-i -2}\delta}.
\endaligned
$$
Due to the regularity of Radon measures and $d(x, p)\in [2^{-i-1}\delta, 2^{-i}\delta]$, we know
\begin{equation}\label{Equ:II-1}
\mu(B_{2^{-i-2}\delta}) \log\frac D{2^{-i -2}\delta} = \mu(\{p\}) \log \frac 1{d(x, p)} + o(\log\frac 1{d(x, p)}) \text{ as $x\to p$}
\end{equation}
and 
\begin{equation}\label{Equ:II-2}
\sum_{k=i_0}^{i} k \mu(B_{2^{-k+2}\delta} \setminus B_{2^{-k +1}\delta}) = o(1) i =
o(1) \log\frac 1{d(x, p)}
\end{equation}
as $i\to\infty$ or equivalently $x\to p$. To see \eqref{Equ:II-2}, for any $\epsilon > 0$, we first find $k_0$ such that
$$
\mu(B_{2^{-l+2}\delta}\setminus B_{2^{-m+1}\delta}) \leq \frac 12 \epsilon
$$
for all $m\geq l\geq k_0$ due to the regularity of $\mu$. Next, we find $N$ such that
$$
\frac{\sum_{k=i_0}^{k_0} k\mu(B_{2^{-k+2}\delta}\setminus B_{2^{-k+1}\delta})}i \leq \frac 12\epsilon
$$
for all $i\geq N$. Together, this gives
$$
\aligned
& \frac {\sum_{k=i_0}^{i} k \mu(B_{2^{-k+2}\delta} \setminus B_{2^{-k +1}\delta}) }i \\
& = 
\frac {\sum_{k=i_0}^{k_0} k \mu(B_{2^{-k+2}\delta} \setminus B_{2^{-k +1}\delta}) }i + 
\frac {\sum_{k=k_0+1}^{i} k \mu(B_{2^{-k+2}\delta} \setminus B_{2^{-k +1}\delta})} i \\
& = 
\frac {\sum_{k=i_0}^{k_0} k \mu(B_{2^{-k+2}\delta} \setminus B_{2^{-k +1}\delta}) }i + 
\sum_{k=k_0+1}^{i}  \mu(B_{2^{-k+2}\delta} \setminus B_{2^{-k +1}\delta})\\
& \leq \epsilon
\endaligned
$$
for all $i\geq N$.
Thus we conclude that
\begin{equation}\label{Equ:term-II-2}
II = (\mu(\{p\}) +o(1)) \log\frac 1{d(x, p)} \text{as $x\to p$}.
\end{equation}
To handle the third term in the right side of \eqref{Equ:calculation-2}, for $\lambda_i>0$ to be determined, we consider 
$$
A^{\lambda_i} = \{x\in \omega^\delta_i: \int_{\Omega^\delta_i} \log\frac {D_i}{d(x, y)}d\mu \geq i\lambda_i \},
$$
where $D_i$ is the dimeter of $\Omega_\delta^i$. By Definition \ref{Def:outer-capacity-euclidean}, 
$$
\mathscr{C}^n(A^{\lambda_i}, \Omega^\delta_i) \leq \frac {\mu(\Omega^\delta_i)}{i\lambda_i}.
$$
In the light of Definition \ref{Def:alpha-thin-euclidean}, we consider
$$
\sum_{i\geq i_0} i \mathscr{C}^n (A^{\lambda_i}, \Omega^\delta_i) \leq \sum_{i\geq i_0} \frac {\mu(\Omega^\delta_i)}{\lambda_i}
$$
and pick up $\lambda_i\to 0$ as $i\to\infty$  by the classic Paul du Bois-Reymond Theorem (cf. \cite[(5) Page 40]{Brom} and
\cite{Rey}) for infinite series such that $\sum_{i\geq i_0} \frac {\mu(\Omega^\delta_i)}{\lambda_i}$ converges when 
$\sum_{i\geq i_0} \mu(\Omega^\delta_i)$ converges. This is to say that the third term in the right side of \eqref{Equ:calculation-2}
\begin{equation}\label{Equ:term-III-2}
\aligned
III & = \int_{\Omega^\delta_i} \log\frac D{d(x, y)}d\mu(y) = \int_{\Omega^\delta_i} \log\frac {D_i}{d(x, y)}d\mu(y)  
+ \log \frac D{D_i} \mu(\Omega^i_\delta)\\
&  \leq (\lambda_i+ (1 + \frac 1i \log\frac 1\delta)\mu(\Omega^i_\delta)) \log\frac D{d(x, p)}\\
& = o(1) \log\frac 1{d(x, p)} \text{ as $x\in\omega^\delta_i\setminus E^{\lambda_i}$ and $x\to p$}.
\endaligned
\end{equation}
Finally, if let $A=\bigcup_i A^{\lambda_i}$, we have
$$
\lim_{x \to p \text{ and } x \in U\setminus A} \frac {\mathscr{R}^{n, U}_\mu (x)}{\log \frac 1{d(x, p)}} = \mu(\{p\}),
$$
where $A$ is $n$-thin at $p$. The proof is complete.
\end{proof}


\section{On scalar curvature equations}\label{Sec:scalar-curvature}

In this section we focus on the scalar curvature equations for conformal deformation of metrics. Let $(M^n, \bar g)$ be
a compact Riemannian manifold for $n\geq 3$. 
Let $R_{ijkl}[\bar g]$ be the Riemann curvature tensor, $R_{ij} [\bar g]= R_{ijkl}\bar g^{kl}$ be 
the Ricci curvature tensor, and $R [\bar g] = R_{ij}\bar g^{ij}$ be the scalar curvature. 
The scalar curvature equation in conformal geometry is
\begin{equation}\label{Equ:yamabe-equation}
- \frac {4(n-1)}{n-2} \Delta[\bar g] u+ R[\bar g]u = R[u^\frac 4{n-2} \bar g] u^\frac {n+2}{n-2}
\end{equation}
for a positive function $u$. The scalar curvature equation describes how the scalar curvature transforms under 
conformal change of metrics. 
In this section we want to use the estimates for the Newton potential in the previous section to study the Hausdorff 
dimensions of the singularities of solutions $u$ to the scalar equations which represent the ends of a complete
conformal metric $u^\frac 4{n-2} \bar g$. 

We remark here that all of the results in this section hold if we assume $S$ is compact, $D\subset M^n$ is a bounded domain that
contains $S$, and $(M^n, \bar g)$ is just complete, because the possible noncompact part $M^n\setminus \bar D$ is not relevant 
for the purpose here.


\subsection{Preliminaries} Let us start with  \cite[Lemma 3.1]{MQ-g}, which
is a slight improvement of \cite[Proposition 8.1]{CHY}. 

\begin{lemma}\label{Lem:lemma-3.1} ( \cite[Lemma 3.1]{MQ-g}) 
Let $(M^n, \bar g)$ be a compact Riemannian manifold and $S$ be a closed subset in $M^n$. And let
$D$ be an open neighborhood of $S$.
Suppose that $g = u^\frac 4{n-2} \bar g$ is a conformal metric on $D\setminus S$ and is geodesically complete near $S$. Then 
$$
u(x) \to +\infty \text{ as $x\to S$}
$$
if 
$R^-[g] \in L^p (D\setminus S, g)$ for some $p > n/2$, where $R^- [g] = \max\{ - R[g], 0\}$ 
stands for the negative part of the scalar curvature $R[g]$ and $L^p(D\setminus S, g)$ 
is the $L^p$ space with respect to the metric $g$.
\end{lemma}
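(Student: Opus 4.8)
The plan is to argue by contradiction: suppose there is a sequence $x_j \to p \in S$ along which $u(x_j)$ stays bounded, say $u(x_j) \le \Lambda$. Since $g = u^{4/(n-2)}\bar g$ is geodesically complete near $S$, the $g$-distance from $x_j$ to $S$ must be bounded below away from zero for points where $u$ is large; so the real content is that a bound on $u$ at a point near $S$ forces $u$ to be controlled in a whole $\bar g$-ball around that point, which then contradicts $g$-completeness. First I would fix a convex normal coordinate ball $U$ around $p$ (bi-Lipschitz to a Euclidean ball, as in Lemma \ref{Lem:exp-T_p}) and work there. The scalar curvature equation \eqref{Equ:yamabe-equation} reads, after dividing through,
\[
-\Delta[\bar g] u = \frac{n-2}{4(n-1)}\bigl(R[u^{4/(n-2)}\bar g]\,u^{(n+2)/(n-2)} - R[\bar g]\,u\bigr).
\]
Writing $R[g] = R[u^{4/(n-2)}\bar g]$, the positive part of the right side is harmless for an upper barrier argument and the dangerous term is $R^-[g]\,u^{(n+2)/(n-2)}$, which we want to treat as a potential-type term with an $L^q$ coefficient.

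The key step is a Moser/De Giorgi type local boundedness estimate, or equivalently a barrier argument, for $-\Delta[\bar g] u \le c\,R^-[g]\,u^{(n+2)/(n-2)} + c\,u$ on $B_r(x_j)\setminus S$. The hypothesis $R^-[g] \in L^p(D\setminus S, g)$ with $p > n/2$ is exactly designed so that, after converting the $g$-integral $\int R^-[g]^p\,dvol_g = \int R^-[g]^p u^{2n/(n-2)}\,dvol_{\bar g}$ and absorbing powers of $u$, the effective coefficient of $u$ in the linearized equation lies in $L^{p}_{\bar g}$ with $p > n/2$ — the borderline Serrin exponent for which a local $L^\infty$ bound on subsolutions holds. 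Concretely, I would show: there exist $r_0 > 0$ and, for each small $\rho$, a constant $C(\rho)$ such that if $u(x_j) \le \Lambda$ then $\sup_{B_\rho(x_j)\setminus S} u \le C(\rho, \Lambda)$, by first obtaining a Harnack-type oscillation control using the smallness of $\|R^-[g]\|_{L^p(B_{r_0}(x_j)\setminus S, g)}$ (which $\to 0$ as $r_0 \to 0$ by absolute continuity of the integral, uniformly as $x_j \to p$), and then iterating. Here one must also know that the set $S$ itself does not obstruct the estimate — this is where one uses that $S$ is where the metric is complete, so capacity/removability considerations let one treat $B_\rho(x_j)\setminus S$ as if it were the full ball for the purpose of the subsolution estimate, or alternatively one works on the subdomain $B_\rho(x_j)\setminus S$ with the natural boundary behavior.

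Once $u$ is bounded on a fixed $\bar g$-ball $B_\rho(x_j)$ (intersected with $D\setminus S$) by a constant independent of $j$, the metric $g = u^{4/(n-2)}\bar g$ is bounded above and below by multiples of $\bar g$ there, hence a $\bar g$-rectifiable path from $x_j$ toward $p$ of bounded $\bar g$-length has bounded $g$-length; letting $j \to \infty$ gives a $g$-rectifiable curve of finite length running out to $S$, contradicting geodesic completeness near $S$. For the final assertion, nonnegativity of $R[g]$ makes $R^-[g] \equiv 0$, so the hypothesis is trivially satisfied.

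I expect the main obstacle to be the local $L^\infty$ estimate in the presence of the removed singular set $S$: one has to make precise that subsolutions of a Schrödinger-type inequality with an $L^{p}$, $p>n/2$, potential on $B_\rho \setminus S$ are bounded, using that $S$ has vanishing appropriate capacity (which itself follows because a set across which a complete metric develops an end cannot be too large — morally the content of Theorem \ref{Thm:intr-main-thm-3}, so one should only invoke the crude fact that $S$ is a closed set of zero Newtonian/$H^1$-capacity, or circumvent this by cutting off near $S$ and using completeness to push the cutoff error to zero). The rest is a standard Moser iteration plus the absolute-continuity smallness of $\|R^-[g]\|_{L^p}$ on small balls, which is routine.
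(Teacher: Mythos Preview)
The paper does not prove this lemma; it simply quotes \cite[Lemma~3.1]{MQ-g} (a refinement of \cite[Proposition~8.1]{CHY}). So there is no ``paper's proof'' to compare with, and your proposal has to be judged on its own merits. Unfortunately it contains a genuine gap at the PDE step.

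The inequality you build the argument on, $-\Delta_{\bar g} u \le c\,R^-[g]\,u^{(n+2)/(n-2)} + c\,u$, is false in general. From \eqref{Equ:yamabe-equation} one has
\[
-\Delta_{\bar g} u \;=\; \tfrac{n-2}{4(n-1)}\bigl(R[g]\,u^{\frac{n+2}{n-2}} - R[\bar g]\,u\bigr),
\]
and the hypothesis controls only $R^-[g]$; nothing prevents $R^+[g]$ from being arbitrarily large. Thus $u$ is \emph{not} a subsolution of any Schr\"odinger-type operator with potential built from $R^-[g]$, and a De Giorgi--Moser local boundedness estimate for $u$ in the $\bar g$-picture is simply unavailable. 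What one \emph{does} get (after arranging $R[\bar g]\le 0$ as in Step~I of Theorem~\ref{Thm:main-result-R}) is the \emph{supersolution} inequality $-\Delta_{\bar g} u \ge -c_n R^-[g]\,u^{(n+2)/(n-2)}$, which yields lower bounds on $u$, not the upper bound your finite-length-curve argument needs. Your claim that the linearised potential $R^-[g]\,u^{4/(n-2)}$ lies in $L^p(\bar g)$ for $p>n/2$ is also circular: the hypothesis gives $\int (R^-[g])^{p}u^{2n/(n-2)}\,dvol_{\bar g}<\infty$, and since $4p/(n-2)>2n/(n-2)$ exactly when $p>n/2$, converting to the potential you want requires an additional factor $u^{(4p-2n)/(n-2)}$ with positive exponent --- i.e.\ boundedness of $u$, which is the conclusion. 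So the ``main obstacle'' is not the removed set $S$; it is that the differential inequality you need does not hold.

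The route taken in \cite{MQ-g,CHY} avoids this by passing to the complete metric $g$. After the reduction $R[\bar g]\le 0$, the inverse conformal factor $v=u^{-1}$ satisfies
\[
-\Delta_g v + c_n R[g]\,v \;=\; c_n R[\bar g]\,v^{\frac{n+2}{n-2}} \;\le\; 0,
\]
hence $-\Delta_g v \le c_n R^-[g]\,v$ with $R^-[g]\in L^p(g)$, $p>n/2$ --- now a genuine subsolution inequality with the \emph{given} integrability, no circularity. Completeness of $g$ near $S$ makes geodesic $g$-balls $B^g_r(x_j)$ compactly contained in $D\setminus S$, so there is no singular set to cut around; the Sobolev inequality needed for iteration is transported from $\bar g$ via conformal covariance of the conformal Laplacian. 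From there one compares $g$- and $\bar g$-geometry of these balls under the assumption $v(x_j)\ge\Lambda^{-1}$ to reach the contradiction, rather than trying to bound $u$ on a $\bar g$-ball that meets $S$.
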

 
For a preliminary estimate on the Hausdorff dimension of $S$, we follow the proof of \cite[Theorem 3.1]{MQ-g} and get 

\begin{proposition}\label{Prop:newton-capacity} Let $(M^n, \bar g)$ be a compact Riemannian manifold and $S$ be a 
closed subset in $M^n$. And let $D$ be an open neighborhood of $S$ where the scalar curvature $R[\bar g]$ is nonpositive.  
Suppose that $g = u^\frac 4{n-2} \bar g$ is a conformal metric on $D\setminus S$ and is geodesically complete near $S$. 
Then the Newton capacity of $S$ is zero and therefore the Hausdorff dimension
$$
\text{dim}_{\mathscr{H}} (S)\leq n-2,
$$  
provided that $$R^-[g] \in L^\frac {2n}{n+2}(D\setminus S, g) \bigcap L^p(D\setminus S, g)$$ for some $p> n/2$.
\end{proposition}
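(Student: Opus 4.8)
\medskip
\noindent\emph{Sketch of the argument.}
The plan is to follow the proof of \cite[Theorem 3.1]{MQ-g}. By Lemma \ref{Lem:lemma-3.1} (which uses geodesic completeness near $S$ together with $R^-[g]\in L^p(D\setminus S,g)$ for some $p>n/2$) we know $u\to+\infty$ as $x\to S$. Working in a convex normal coordinate chart at a point of $S$ and using Lemma \ref{Lem:exp-T_p}, it suffices to treat the Newton capacity of $S$, i.e.\ the capacity $C^{2}(\cdot,\cdot)$ of Definition \ref{Def:outer-capacity-euclidean} with $\alpha=2$ (which is allowed since $n\geq 3$). The goal is to show this capacity of $S$ vanishes; then the classical fact that a set of positive $\mathscr{H}_{n-2+\epsilon}$--measure has positive Newton capacity (see e.g.\ \cite{Mi, AH96}) forces $\mathscr{H}_{n-2+\epsilon}(S)=0$ for every $\epsilon>0$, i.e.\ $\text{dim}_{\mathscr{H}}(S)\le n-2$. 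Throughout, $u$ is a positive solution, smooth and in particular $W^{1,2}_{\mathrm{loc}}$ on $D\setminus S$ by elliptic regularity.

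The main step is a Caccioppoli estimate for $\log u$. Fix $\phi\in C_c^\infty(D)$ with $\phi\equiv 1$ on a neighborhood $V$ of $S$ and write $c_n=\frac{4(n-1)}{n-2}$. I claim
\[
\int_{D\setminus S}\phi^2\,\frac{|\nabla u|^2}{u^2}\,dvol[\bar g]\le C_0<\infty .
\]
To prove this, I would test \eqref{Equ:yamabe-equation} against $\eta_j=\zeta_j\,\phi^2/u$, where $\zeta_j=\Xi(u/m_j)$ with $\Xi$ a fixed nonincreasing cutoff, $\Xi\equiv 1$ on $[0,1]$ and $\Xi\equiv 0$ on $[2,\infty)$, and $m_j\to+\infty$. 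Since $u\to+\infty$ near $S$, each $\eta_j$ is Lipschitz with compact support in $D\setminus S$, hence admissible. Expanding $\nabla(\zeta_j\phi^2/u)$, the term coming from $\nabla\zeta_j=\Xi'(u/m_j)\nabla u/m_j$ has the favorable sign because $\Xi'\le 0$; the cross term involving $\nabla\phi$ is absorbed by Cauchy--Schwarz into a fraction of $\int\zeta_j\phi^2|\nabla u|^2/u^2$; and the zeroth order terms contribute, on the right-hand side,
\[
\int\big(R[\bar g]-R[g]\,u^{\frac 4{n-2}}\big)\,\phi^2\zeta_j\;\le\;\int R^-[g]\,u^{\frac 4{n-2}}\,\phi^2\,dvol[\bar g],
\]
using $R[\bar g]\le 0$ on $D$. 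Finally, writing $R^-[g]\,u^{\frac 4{n-2}}=\big(R^-[g]\,u^{\frac{n+2}{n-2}}\big)\cdot u^{-1}$ and applying H\"older with the conjugate exponents $\frac{2n}{n+2}$ and $\frac{2n}{n-2}$, together with $dvol[g]=u^{\frac{2n}{n-2}}dvol[\bar g]$, turns the first factor into $\|R^-[g]\|_{L^{\frac{2n}{n+2}}(D\setminus S,g)}$, finite by hypothesis, while $u^{-1}\phi^2$ is bounded and $D$ has finite $\bar g$--volume. Thus the right-hand side is bounded uniformly in $j$, and letting $m_j\to+\infty$ (monotone convergence) yields the bound.

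With $C_0$ fixed, for $k$ large set $\psi_k=\max\{0,\min\{1,\log(u/k)/\log k\}\}$ and $\varphi_k=\phi\,\psi_k$. Since $\nabla\phi$ is supported in a compact subset of $D\setminus S$ on which $u$ is bounded, for $k$ large $\psi_k$ vanishes there, so $\nabla\varphi_k=\phi\,\nabla\psi_k$ and
\[
\int|\nabla\varphi_k|^2\,dvol[\bar g]=\frac{1}{(\log k)^2}\int_{\{k<u<k^2\}}\phi^2\,\frac{|\nabla u|^2}{u^2}\,dvol[\bar g]\le\frac{C_0}{(\log k)^2}\longrightarrow 0
\]
as $k\to\infty$. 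Meanwhile $\varphi_k$ is Lipschitz, compactly supported in $D$, and identically $1$ on the open set $(\{u>k^2\}\cup S)\cap V$, which is a neighborhood of $S$ (using again $u\to+\infty$ near $S$). Hence $\varphi_k$ is an admissible competitor for the Newton capacity of $S$, so letting $k\to\infty$ shows this capacity is $0$, and then $\text{dim}_{\mathscr{H}}(S)\le n-2$ by the first paragraph.

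I expect the delicate point to be the uniform Caccioppoli bound near $S$: the inner cutoff must be chosen so that the term created by $\nabla\zeta_j$ does not overwhelm the estimate — the sublevel-set cutoff $\Xi(u/m_j)$ works precisely because $\Xi'\le 0$ — and the Sobolev exponents must match exactly so that the curvature term is controlled by the single quantity $\|R^-[g]\|_{L^{2n/(n+2)}(D\setminus S,g)}$; this is why that particular (critical dual exponent) integrability is imposed, in addition to the $L^p$, $p>n/2$, assumption used only through Lemma \ref{Lem:lemma-3.1}. The reduction to Euclidean capacity and the passage from vanishing capacity to the Hausdorff dimension bound should be routine given the potential theory of Section \ref{Sec:potential-capacity}.
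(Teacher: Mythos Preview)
Your argument is correct and reaches the same conclusion, but the route differs from the paper's in a meaningful way. The paper tests the scalar curvature equation against the truncation
\[
\phi_{\alpha,\beta}=u_{\alpha,\beta}-\beta+\beta(1-\eta),\qquad
u_{\alpha,\beta}=\min\{u-\alpha,\beta\},
\]
(following \cite{B-V,DHM,MQ-g}) and obtains $\int_{\Sigma_\alpha}|\nabla u_{\alpha,\beta}|^2\le C\beta$; the competitor for the capacity is then $u_{\alpha,\beta}/\beta$, whose Dirichlet energy is $O(\beta^{-1})$. Here the favorable sign comes from $(\beta-u_{\alpha,\beta})\ge 0$ multiplying the nonnegative term $-Ru+R^+[g]u^{(n+2)/(n-2)}$. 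Your approach instead tests against $\zeta_j\phi^2/u$ to obtain a log--Caccioppoli bound $\int\phi^2|\nabla\log u|^2\le C_0$, with the favorable sign coming from $\Xi'\le 0$, and then uses the logarithmic competitor $\phi\cdot\max\{0,\min\{1,\log(u/k)/\log k\}\}$, whose energy is $O((\log k)^{-2})$. Both are standard level-set devices; the paper's linear truncation interacts more directly with the structure $(-Ru+R^+[g]u^{(n+2)/(n-2)})\ge 0$, while your log--Caccioppoli is the classical route one would take for polar sets and arguably separates the two steps (energy bound, then competitor) more cleanly.

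Two small remarks. First, what you are actually bounding with $\varphi_k$ is the variational (Sobolev) $2$--capacity $\text{Cap}_2(S,D)$, which is what the paper calls the Newton capacity; identifying it with the outer capacity $C^{2}$ of Definition~\ref{Def:outer-capacity-euclidean} is a slight mislabeling, though harmless since these are comparable and the Hausdorff--dimension consequence follows either way (the paper cites \cite{AM} and \cite[Chapter VI, Theorem 2.10]{SYb}). Second, your assertion that ``$u^{-1}\phi^2$ is bounded'' does require a one-line justification: on $\operatorname{supp}\phi$ the function $u$ is bounded below because $u\to+\infty$ near $S$ and $u$ is continuous and positive on the compact set $\operatorname{supp}\phi\setminus\{u>1\}\subset D\setminus S$.
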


\begin{proof} Recall the scalar curvature equation
\begin{equation}\label{Equ:right-yamabe-equation}
-\frac {4(n-1)}{n-2}\Delta u = -R u + R^+[g] u^\frac {n+2}{n-2} - R^-[g] u^\frac {n+2}{n-2} \text{ in $D\setminus S$},
\end{equation}
where
\begin{equation}\label{Equ:L^1-R^-}
\aligned
\int_{D\setminus S} R^-[g] u^\frac {n+2}{n-2} dvol & \leq (\int_{D\setminus S} (R^-[g])^\frac {2n}{n+2} u^\frac{2n}{n-2} 
dvol)^\frac {n+2}{2n} \text{vol}(D)^\frac {n-2}{2n} \\
& \leq (\int_{D\setminus S} (R^-[g])^\frac {2n}{n+2} dvol[g])^\frac {n+2}{2n} \text{vol}(D)^\frac {n-2}{2n}\\
& < \infty.
\endaligned
\end{equation}
Here, and from now on in the following, all geometric quantities are under the background metric $\bar g$ unless 
indicated otherwise. And, in the light of Lemma \ref{Lem:lemma-3.1}, we know
$$
u(x) \to +\infty \text{ as $x\to S$}.
$$
As in the proof of \cite[Theorem 3.1]{MQ-g} (adopted from \cite[Lemma 1.2]{B-V}), we use the following test functions. First we let
$$
u_{\alpha, \beta} = \left\{\aligned \beta & \quad u \geq \alpha + \beta;\\
u-\alpha & \quad u < \alpha + \beta.\endaligned\right. \text{ and } \phi_{\alpha, \beta} = u_{\alpha, \beta} - \beta + \beta (1 - \eta),
$$
where $\eta \in C^\infty_c(\Sigma_\alpha)$ is a fixed cut-off function that is equal to one in a neighborhood of $S$ 
and $\Sigma_\alpha = \{x\in D: u(x) > \alpha\}$. Notice that, for $\beta$ sufficiently large,
$$
u_{\alpha, \beta} \in (0, \beta] \text{ in $\Sigma_\alpha$ and } \phi_{\alpha, \beta} = 0 \text{ on $\{x\in D: u(x) = \alpha\}
\bigcup \{x\in D: u\geq \alpha + \beta$}\}
$$
and
$$
\nabla \phi_{\alpha, \beta} = \nabla u_{\alpha, \beta} + \beta \nabla \eta \text{ and } \nabla u = \nabla u_{\alpha, \beta} \text{ when }
\nabla u_{\alpha, \beta} \neq 0.
$$
We then multiply $\phi_{\alpha, \beta}$ to the equation \eqref{Equ:right-yamabe-equation} and get
$$
\frac {4(n-1)}{n-2}\int_{\Sigma_\alpha} \nabla u\cdot \nabla\phi_{\alpha, \beta} dvol
= \int_{\Sigma_\alpha} ( - R u + R[g]u^\frac {n+2}{n-2})\phi_{\alpha, \beta}dvol.
$$
Therefore
\begin{equation}\label{Equ:newton-capacity-estimate}
\aligned
\frac {4(n-1)}{n-2} & \int_{\Sigma_\alpha}  |\nabla u_{\alpha, \beta}|^2 dvol  \\
& = \beta  \int_{\Sigma_\alpha} (\frac {n-2}{4(n-1)} \nabla u \cdot\nabla \eta + (-Ru + R[g]u^\frac {n+2}{n-2})(1-\eta))dvol \\
& - \int_{\Sigma_\alpha} (-Ru + R^+[g]u^\frac {n+2}{n-2})(\beta - u_{\alpha, \beta}) dvol\\
& + \int_{\Sigma_\alpha} R^-[g]u^\frac {n+2}{n-2} (\beta - u_{\alpha, \beta}) dvol\\
& \leq C\beta,
\endaligned
\end{equation}
where $C$ depends on $\alpha$ and $\eta$ but does not depend on $\beta$, due the support of $1 - \eta$ and \eqref{Equ:L^1-R^-}. 
That is
$$
\int_{\Sigma_\alpha} |\nabla \frac {u_{\alpha, \beta}}\beta |^2 dvol \leq \frac C\beta \to 0
$$
as $\beta\to \infty$, where $\frac {u_{\alpha, \beta}}\beta$ is a function that is identically one in a neighborhood of $S$. This implies
the Newton capacity $\text{Cap}_2(S, D)$ of $S$ is zero.
Consequently, we know $S$ is of the Hausdorff dimensions not greater than $n-2$ 
(cf. \cite{AM} and \cite[Theorem 2.10 in Chapter VI]{SYb}). So the proof is complete.
\end{proof}


\subsection{$-\Delta u$ is a Radon measure on $D$}
In order to use the estimates of potentials in the previous section, we need the following lemma (cf. \cite[Lemma 3.2 - 3.4]{MQ-g}).   

\begin{lemma} \label{Lem:radon-measure} 
Let $(M^n, \bar g)$ be a compact Riemannian manifold and $S$ be a 
closed subset in $M^n$. And let $D$ be an open neighborhood of $S$ where the scalar curvature $R[\bar g]$ is nonpositive.  
Suppose that $g = u^\frac 4{n-2} \bar g$ is a conformal metric on $D\setminus S$ and is geodesically complete near $S$. 
Then $-\Delta u$ is a Radon measure on $D$ and $-\Delta u|_S \geq 0$, provided that
$$R^-[g] \in L^\frac {2n}{n+2}(D\setminus S, g) \bigcap L^p(D\setminus S, g)$$ for some $p> n/2$.
\end{lemma}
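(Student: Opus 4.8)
The assertion is local near $S$: away from $S$ the function $u$ is smooth and $-\Delta u$ is simply the bounded function on the right of the scalar curvature equation, so I would fix $p\in S$ and a ball $B$ with $\overline{B}\subset D$ and argue on $B$. On $B\setminus S$ rewrite \eqref{Equ:right-yamabe-equation} as $-\Delta u = F - G$, where
\[
F := \tfrac{n-2}{4(n-1)}\bigl(-R[\bar g]\,u + R^+[g]\,u^{\frac{n+2}{n-2}}\bigr), \qquad G := \tfrac{n-2}{4(n-1)}\,R^-[g]\,u^{\frac{n+2}{n-2}} .
\]
Here $F\ge 0$ because $R[\bar g]\le 0$ on $D$ by hypothesis and $R^+[g]\ge 0$, while $G\ge 0$ and $G\in L^1(B\setminus S)$ by the computation \eqref{Equ:L^1-R^-} already used in the proof of Proposition \ref{Prop:newton-capacity}; also $u(x)\to+\infty$ as $x\to S$ by Lemma \ref{Lem:lemma-3.1}.

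The plan is to peel off the $L^1$ part $G$ and then recognize what remains as a superharmonic function with a removable singularity. Let $w$ solve $-\Delta w = -G$ on $B$ with zero boundary data; since $G\in L^1(B)$, the function $-w$ is the potential of a nonnegative $L^1$ density, hence superharmonic on $B$, lower semicontinuous and finite off a polar set, and therefore bounded below on every compact subset of $B$. Then $v := u - w$ satisfies $-\Delta v = F\ge 0$ on $B\setminus S$, so $v$ is superharmonic there, and $v$ is bounded below near $S$, being the sum of $u$ (which tends to $+\infty$ at $S$ and is continuous, hence bounded below, away from $S$) and the bounded‑below function $-w$. By Proposition \ref{Prop:newton-capacity} the Newton capacity of $S$ is zero, so $S$ is polar and Lebesgue‑null; working in a normal coordinate chart reduces $\Delta[\bar g]$ to a uniformly elliptic operator, and the classical removable‑singularity theorem for superharmonic functions then extends $v$ to a superharmonic $\bar v$ on $B$ (necessarily taking the value $+\infty$ on $S$). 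In particular $\bar v\in L^1_{\mathrm{loc}}(B)$, hence $u = \bar v + w\in L^1_{\mathrm{loc}}(B)$ and $-\Delta u$ is a genuine distribution on $B$.

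By the Riesz representation theorem $\nu := -\Delta\bar v$ is a nonnegative (locally finite) Radon measure on $B$, with $\nu = F\,dvol$ on $B\setminus S$. Therefore, as distributions on $B$,
\[
-\Delta u = -\Delta\bar v + (-\Delta w) = \nu - G\,dvol ,
\]
which exhibits $-\Delta u$ as a signed Radon measure on $B$ — the difference of the nonnegative Radon measure $\nu$ and the $L^1$ function $G$. Since $S$ is Lebesgue‑null, $G\,dvol$ carries no mass on $S$, so $(-\Delta u)\big|_{S} = \nu\big|_{S}\ge 0$. Covering $D$ by finitely many such balls together with the open region $D\setminus S$, on which $-\Delta u = F - G$ is an $L^1_{\mathrm{loc}}$ function, yields the claim on all of $D$.

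The main obstacle I expect is precisely the a priori integrability of $u$ near $S$: nothing in the hypotheses directly forbids $u$ from blowing up fast enough that $-\Delta u$ fails to be locally finite, and this is exactly what the zero Newton capacity of $S$ rules out through the removable‑singularity step applied to $v = u - w$. If one prefers to avoid potential theory (following \cite{MQ-g}), the same conclusion can be reached hands‑on by testing $-\Delta u$ against $\phi(1-\zeta_t)$ with $\phi\in C^\infty_c(B)$, $\phi\ge 0$, and level‑set cut‑offs $\zeta_t = \min\{(u-t)_+/t,\,1\}$, and controlling the boundary‑layer terms $\int u\,\nabla\phi\cdot\nabla\zeta_t$ and $\tfrac1t\int_{\{t<u<2t\}}\phi\,|\nabla u|^2$ by the energy bound \eqref{Equ:newton-capacity-estimate} together with $|\{u>t\}|\to 0$; in that route it is the sign $-\tfrac1t\int_{\{t<u<2t\}}\phi\,|\nabla u|^2\le 0$ that forces $(-\Delta u)|_S\ge 0$.
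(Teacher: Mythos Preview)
Your main argument is correct and takes a genuinely different route from the paper's. You subtract the Green potential $w$ of the integrable part $G$, observe that $v=u-w$ is superharmonic on $B\setminus S$ and bounded below, and then invoke the removable--singularity theorem for superharmonic functions across the polar set $S$ (using Proposition~\ref{Prop:newton-capacity}) together with the Riesz decomposition. The paper instead argues hands--on with the smooth truncations $\alpha_s(u)$: it first shows $f\in L^1(D)$ by integrating $-\Delta\alpha_s(u)$, then proves the auxiliary Lemma~\ref{Lem:L^1-gradient} ($\nabla u\in L^p$ for $p<\tfrac{n}{n-1}$, $u\in L^q$ for $q<\tfrac{n}{n-2}$) by duality, and finally passes to the limit $(-\Delta u)(\phi)=\lim_{s\to\infty}(-\Delta\alpha_s(u))(\phi)$ via dominated convergence; the sign $-\alpha_s''(u)|\nabla u|^2\ge 0$ furnishes $(-\Delta u)|_S\ge 0$. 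Your approach is cleaner and more conceptual but leans on Proposition~\ref{Prop:newton-capacity} and on the removable--singularity theorem for divergence--form elliptic operators, whereas the paper's proof is self--contained and, importantly, yields Lemma~\ref{Lem:L^1-gradient} as a byproduct, which is reused in Section~\ref{Sec:Q-curvature}. The alternative you sketch at the end with level--set cut--offs $\zeta_t$ is essentially the paper's argument in different notation, with your term $-\tfrac1t\int_{\{t<u<2t\}}\phi|\nabla u|^2\le 0$ playing the role of $-\alpha_s''(u)|\nabla u|^2\ge 0$.
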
 
  
\begin{proof} Again, recall the scalar curvature equation
\begin{equation}\label{Equ:scalar-curv-equation-f}
-\frac {4(n-1)}{n-2}\Delta u = -R u + R^+[g] u^\frac {n+2}{n-2} - R^-[g] u^\frac {n+2}{n-2} = f \text{ in $D\setminus S$},
\end{equation}
where
$$
\int_{D} R^-[g] u^\frac {n+2}{n-2} dvol < \infty.
$$
And, in the light of Lemma \ref{Lem:lemma-3.1}, we know
$$
u(x) \to \infty \text{ as $x\to S$}.
$$
Then we claim the right hand side $f$ of the equation \eqref{Equ:scalar-curv-equation-f} is in $L^1(D)$.
To prove this claim, we follow the argument in the proof of \cite[Theorem 3.2]{MQ-g} 
(stated as \cite[Lemma 3.2]{MQ-g}) . Let 
$$
\alpha_s (t) = \left\{\aligned t & \quad t \leq s;\\
\text{increasing} & \quad t \in [s, 10 s];\\
2s & \quad t\geq 10 s \endaligned\right.
$$ 
(this functions was used in \cite{DHM}). 
Notice that one may require $\alpha_s'\in [0, 1]$ and $\alpha_s'' \leq 0$. We calculate
$$
- \Delta \alpha_s(u) = - \alpha''(u) |\nabla u|^2 + \alpha_s'(u)(-\Delta u) 
$$
and, for $s> \max \{u(x): x\in \partial D\}$,
$$
\int_{\partial D} \frac {\partial u}{\partial \nu} d\sigma
= \int_D  \Delta \alpha_s(u) dvol = 
\int_D ( -  \alpha''(u) |\nabla u|^2 + \alpha'(u) \frac {n-2}{4(n-1)} f)dvol.
$$
Hence
$$
\int_D ( - \alpha_s''(u)|\nabla u|^2 + \alpha_s'(u) \frac {n-2}{4(n-1)} f^+) \, dvol \leq 
\int_{\partial D} \frac {\partial u}{\partial \nu} d\sigma 
+ \frac {n-2}{4(n-1)}\int_{D} R^-[g] u^\frac {n+2}{n-2} dvol
$$
and
$$
\int_D |\Delta \alpha_s(u)| dvol = \int_D (- \alpha''(u) |\nabla u|^2+ \alpha'(u) \frac {n-2}{4(n-1)}
(f^+ + f^-) )dvol.
$$
By Fatou's lemma, as $s\to\infty$, we have
$$
 \int_{D} f^+dvol \leq \frac {4(n-1)}{n-2}
 \int_{\partial D} \frac {\partial u}{\partial \nu} d\sigma  + \int_{D} R^-[g] u^\frac {n+2}{n-2} dvol.
$$
So the claim is proven. Moreover, 
$$
\int_D |\Delta \alpha_s(u)| dvol
\leq \int_{\partial D} \frac {\partial u}{\partial \nu} d\sigma  + \frac {n-2}{2(n-1)} 
\int_{D} R^-[g] u^\frac {n+2}{n-2} dvol.
$$
Consequently, for $\phi\in C^\infty_c(D)$,
$$
\aligned
|-\Delta & \alpha_s(u) (\phi) |  = |\int_D (-\Delta\alpha_s (u))\phi dvol | \\
& \leq \int_D|\Delta \alpha_s(u)| dvol \|\phi\|_{C^0(D)}\\
& \leq (\int_{\partial D} \frac {\partial u}{\partial \nu} d\sigma
+ \frac {n-2}{2(n-1)} \int_{D\setminus S} R^-[g] u^\frac {n+2}{n-2} dvol) \|\phi\|_{C^0(D)},
\endaligned
$$
for any $s$ larger. Before we show $-\Delta u$ is a Radon measure, let us state and prove a lemma which is useful
for the proof now and later in the following sections.

\begin{lemma}\label{Lem:L^1-gradient}
Let $(M^n, \bar g)$ be a compact Riemannian manifold and $S$ be a 
closed subset in $M^n$. And let $D$ be an open neighborhood of $S$ where the scalar curvature $R[\bar g]$ is nonpositive.  
Suppose that $g = u^\frac 4{n-2} \bar g$ is a conformal metric on $D\setminus S$ and is geodesically complete near $S$. 
Then 
\begin{equation}\label{Equ:L^1-gradient}
\nabla u\in L^p (D) \text{ and } u \in L^q (D)
\end{equation}
for $p\in [1, \frac n{n-1})$ and $q\in [1, \frac n{n-2})$, provided that
$$R^-[g] \in L^\frac {2n}{n+2}(D\setminus S, g) \bigcap L^p(D\setminus S, g)$$ for some $p> n/2$.
\end{lemma} 

\begin{proof}
In fact, we continue from the above, for $\phi\in C^\infty_c(D)$,
\begin{equation}\label{Equ:L^p-gradient}
\aligned
|\int_D & \nabla \alpha_s(u) \cdot \nabla \phi dvol | = |\int_D (-\Delta[\bar g]\alpha_s(u)\phi) dvol|\\
&  \leq  (\int_{\partial D} \frac {\partial u}{\partial \nu} d\sigma
+ \frac {n-2}{2(n-1)} \int_{D\setminus S} R^-[g] u^\frac {n+2}{n-2} dvol) \|\phi\|_{C^0(D)}\\
& \leq C (\int_{\partial D} \frac {\partial u}{\partial \nu} d\sigma 
+ \frac {n-2}{2(n-1)} \int_{D\setminus S} R^-[g] u^\frac {n+2}{n-2} dvol) \|\nabla \phi\|_{L^\lambda(D)}
\endaligned
\end{equation}
for any $\lambda > n$ due to the Sobolev embedding theorem. Therefore, for any $s$ appropriately large,
$$
\|\nabla \alpha_s(u)\|_{L^{p}(D)} \leq C \text{ and } \|\alpha_s(u)\|_{L^q(D)}\leq C
$$
for some constant $C$ and $p = \lambda' \in (1, \frac n{n-1})$ and $q\in [1, \frac n{n-2})$, where $C$ is independent of $s$. 
Therefore we first have, by Fatou's lemma, 
$$
\|u\|_{L^q(D)} \leq C
$$
for some $C$ and $q\in [1, \frac n{n-2})$. Moreover, we calculate
\begin{equation}\label{Equ:integrable-gradient-u}
\aligned
|\nabla u (\phi) | & = |\int_D u \nabla \phi dvol|= |\lim_{s\to\infty} \int_D \alpha_s(u) \nabla\phi dvol| = 
|\lim_{s\to\infty} \int_D \alpha_s'(u) \nabla u\phi dvol| \\
& \leq  \limsup_{s\to \infty}\|\alpha'_s(u) \nabla u\|_{L^{p}(D)}  \|\phi\|_{L^\lambda} \leq C \|\phi\|_{L^\lambda}.
\endaligned
\end{equation}
This implies 
\begin{equation}\label{Equ:gradient-u-integrable}
\nabla u\in L^p (D) \text{ and } u \in L^q (D)
\end{equation}
for $p\in [1, \frac n{n-1})$ and $q\in [1, \frac n{n-2})$. This lemma is proven.
\end{proof} 

Back to the proof of Lemma \ref{Lem:radon-measure}, 
\begin{equation}\label{Equ:distributional-sense}
\aligned
(- \Delta u )(\phi) & = \int_D \nabla u\cdot \nabla \phi \, dvol\\
& = \lim_{s\to \infty} \int_D \alpha_s'(u) \nabla u\cdot\nabla \phi \, dvol\\
& = \lim_{s\to\infty} (-\Delta\alpha_s(u)) (\phi))
\endaligned
\end{equation}
where the dominated convergence theorem is applied due to $\nabla u\in L^1(D)$. Thus, for $\phi\in C^\infty_c(D)$, 
$$
|(- \Delta u) (\phi) | \leq \ (\int_{\partial D} \frac {\partial u}{\partial \nu} d\sigma
+ \frac {n-2}{2(n-1)} \int_{D\setminus S} R^-[g] u^\frac {n+2}{n-2} dvol) \|\phi\|_{C^0(D)},
$$
which implies that $-\Delta u$ is a Radon measure on $D$. To show that $-\Delta u|_S \geq 0$, we calculate,
for a nonnegative function $\phi\in C^\infty_c(D)$,
$$
\aligned
(-\Delta u) (\phi) & = \int_D \nabla u \cdot\nabla\phi dvol = \lim_{s\to\infty} \int_D \nabla\alpha_s(u)\cdot\nabla\phi dvol\\
&  = \lim_{s\to\infty} \int_D (-\Delta\alpha_s(u))\phi dvol\\
& = \lim_{s\to\infty}\int_D [(\alpha_s'(u) (\frac {n-2}{4(n-1)}(-Ru + R[g] u^\frac {n+2}{n-2}) - \alpha_s''(u) |\nabla u|^2] \phi 
dvol \\
& \geq - \{\frac {n-2}{4(n-1)} \int_{\text{supp}\phi\setminus S} |-R u + R[g]u^\frac {n+2}{n-2}| dvol\}\|\phi\|_{C^0(D)}\to 0
\endaligned
$$ 
as $\int_{\text{supp}\phi\setminus S}dvol \to 0$ and $\|\phi\|_{C^0(D)}=1$, which implies $-\Delta u|_S \geq 0$.
\end{proof}


\subsection{Main result on the Hausdorff dimensions}
Now we are ready to state and prove our result on the Hausdorff dimension of the singular set $S$, which is a significant
improvement of Proposition \ref{Prop:newton-capacity}. For convenience of readers, we recall 
Theorem \ref{Thm:intr-main-thm-3} from the introduction. 
  
\begin{theorem} \label{Thm:main-result-R} Let $(M^n, \bar g)$ be a compact Riemannian manifold and $S$ be a 
closed subset in $M^n$. And let $D$ be an open neighborhood of $S$.  
Suppose that $g = u^\frac 4{n-2} \bar g$ is a conformal metric on $D\setminus S$ and is geodesically complete near $S$. 
 Then the Hausdorff dimension
\begin{equation}\label{Equ:hausdorff-dimension} 
\text{dim}_\mathscr{H} (S) \leq \frac {n-2}2
\end{equation}
provided $R^-[g] \in L^\frac {2n}{n+2} (D\setminus S, g) \bigcap L^p(D\setminus S, g)$
for some $p > n/2$. Consequently, \eqref{Equ:hausdorff-dimension} holds when the scalar curvature $R[g]$ of the 
conformal metric $g$ is nonnegative.
\end{theorem}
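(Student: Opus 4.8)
The plan is to bound $u$ from above by a constant multiple of a Riesz potential of order $\alpha=2$ plus a bounded term, to feed this into the decay estimate of Theorem~\ref{Thm:main-estimate} and the ray-avoidance property of Theorem~\ref{Thm:segment-euclidean}, and finally to contradict geodesic completeness by a length integral whose convergence threshold comes out to be exactly $(n-2)/2$. Before anything else I would perform a standard conformal change $\bar g\mapsto v^{\frac{4}{n-2}}\bar g$ with $v$ smooth and positive so that $R[\bar g]\le 0$ on $D$; this alters neither $g=u^{\frac{4}{n-2}}\bar g$, nor its completeness near $S$, nor the curvature hypotheses, nor $\dim_{\mathscr{H}}(S)$, and it puts us under the hypotheses of Proposition~\ref{Prop:newton-capacity} and Lemmas~\ref{Lem:radon-measure} and~\ref{Lem:L^1-gradient}. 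In particular Proposition~\ref{Prop:newton-capacity} already yields $\dim_{\mathscr{H}}(S)\le n-2$, hence $S$ has empty interior, a fact I use below. Arguing by contradiction, assume $\dim_{\mathscr{H}}(S)>(n-2)/2$ and fix $d$ with $(n-2)/2<d<\min\{\dim_{\mathscr{H}}(S),\,n-2\}$, so that $\alpha=2$ satisfies $d<n-\alpha$.

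The next step is the potential representation. By Lemma~\ref{Lem:radon-measure}, $-\Delta u$ is a finite signed Radon measure on $D$ whose negative part is absolutely continuous and dominated by a constant times $R^-[g]\,u^{\frac{n+2}{n-2}}\,dvol\in L^1(D)$; set $\mu:=(-\Delta u)^+$. Since $u\in L^1(D)$ by Lemma~\ref{Lem:L^1-gradient}, on a geodesic ball $U\Subset D$ the Green representation reads $u=\int_U G_U(\cdot,y)\,d(-\Delta u)(y)+H$ with $0\le G_U(x,y)\le C\,d(x,y)^{2-n}$ and $H$ harmonic --- hence smooth and locally bounded --- in $U$. Applying Theorem~\ref{Thm:main-estimate} to $\mu$ on a bounded domain $G$ with $S\subset G\Subset D$, with the above $d$ and $\alpha=2$ (transferring between $T_pM^n$ and $M^n$ by Lemma~\ref{Lem:exp-T_p}), I obtain a point $p\in S$, a subset $E$ that is $2$-thin at $p$, and $\delta>0$ with $\int_G d(x,y)^{2-n}\,d\mu(y)\le C\,d(x,p)^{-(n-2-d)}$ for $x\in B_\delta(p)\setminus E$. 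Combined with the representation centred at $p$ this gives
\[
u(x)\ \le\ C\,d(x,p)^{-(n-2-d)}+C_1\qquad\text{for }x\in\bigl(B_\delta(p)\setminus E\bigr)\cap\bigl(D\setminus S\bigr).
\]

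The step I expect to be the main obstacle is upgrading ``avoids $E$'' to ``stays in $D\setminus S$''. Theorem~\ref{Thm:segment-euclidean}, via Lemma~\ref{Lem:exp-T_p}, supplies a $\bar g$-geodesic ray $\gamma$ from $p$, unit speed, that avoids $E$ inside $B_\delta(p)$, and the forbidden directions form a set of $C^2$-capacity strictly below $C^2(\partial B_1,B_2)$. For such a $\gamma$, wherever $\gamma(t)\notin S$ the displayed bound applies and keeps $u(\gamma(t))$ locally finite, while $u\to\infty$ near $S$ by Lemma~\ref{Lem:lemma-3.1}; a connectedness argument then forces $\gamma^{-1}(S)\cap(0,\delta)$ to be both open and closed in $(0,\delta)$, hence empty or all of it, and the second alternative would put the initial direction of $\gamma$ into the set $\Theta$ of directions along which the geodesic immediately enters $S$. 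Since the cone over $\Theta$ with vertex $p$ lies in $S$, $\dim_{\mathscr{H}}\Theta+1\le\dim_{\mathscr{H}}S\le n-2$, so $\dim_{\mathscr{H}}\Theta\le n-3$ and, being of dimension below $n-2=n-\alpha$, $\Theta$ has $C^2(\Theta,B_2)=0$; therefore the $E$-forbidden directions together with $\Theta$ still miss a direction, and I may take $\gamma$ with $\gamma((0,\delta))\subset D\setminus S$.

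It then remains to run the length computation. Along this $\gamma$ one has $d(\gamma(t),p)\asymp t$, hence $u(\gamma(t))\le C\,t^{-(n-2-d)}+C_1$, and the $g$-length of $\gamma|_{(0,\delta)}$ is
\[
\int_0^\delta u(\gamma(t))^{\frac{2}{n-2}}\,dt\ \le\ C\int_0^\delta\bigl(t^{-(n-2-d)}+1\bigr)^{\frac{2}{n-2}}\,dt\ <\ \infty,
\]
the integral converging precisely because $\frac{2(n-2-d)}{n-2}<1$, i.e.\ $d>\frac{n-2}{2}$. Thus a point of $D\setminus S$ is joined to $p\in S$ by a curve of finite $g$-length, contradicting geodesic completeness of $g$ near $S$; hence $\dim_{\mathscr{H}}(S)\le(n-2)/2$. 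Finally, when $R[g]\ge 0$ the negative part $R^-[g]$ vanishes identically and all hypotheses hold trivially.
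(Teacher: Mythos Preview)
Your proof is correct and follows the same three-step strategy as the paper's: a conformal change to arrange $R[\bar g]\le 0$ on $D$, the Green representation $u\le C\mathscr{R}^{2,D}_{\mu^+}+h$ from Lemma~\ref{Lem:radon-measure}, and then Theorem~\ref{Thm:main-estimate} combined with Theorem~\ref{Thm:segment-euclidean} to produce a geodesic ray along which $u(\gamma(t))^{2/(n-2)}\lesssim t^{-2(n-2-d)/(n-2)}$ is integrable when $d>(n-2)/2$, contradicting completeness. The one place you go beyond the paper is in arguing that the ray can be taken to lie in $D\setminus S$ and not merely to avoid the thin set $E$; the paper's Step~III passes over this silently, while your open--closed argument together with the bound $\dim_{\mathscr H}\Theta\le\dim_{\mathscr H}S-1\le n-3$ (hence $C^2(\Theta,B_2)=0$) is a legitimate and careful way to close that point.
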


\begin{proof} The outline of the proof is as follows: We first show that one may assume the scalar curvature $R[\bar g]$ 
is nonpositive without loss of generality for our purpose. Then we use the Green function to construct the integral representation of the 
solution to the Laplace equation. Finally we apply Lemma \ref{Lem:radon-measure}, Theorem \ref{Thm:main-estimate}, 
and the geodesic completeness to complete the proof .

\vskip 0.1in\noindent
{\bf Step I}\quad  In this step, we find a conformal change $\bar h = v^\frac 4{n-2} \bar g$ such that the scalar curvature $R[\bar h]$ is
nonpositive (or even negative) in $D$, based on the similar idea used in the proof of \cite[Lemma 3.1]{MQ-g}. This is trivial if the 
Yamabe constant 
of $(M^n, \bar g)$ is nonpositive. Otherwise, take a point $p\in M^n\setminus D$ and consider a connected sum of $M^n$ with 
another compact Riemannian manifold
$(M^n_1, \bar g_1)$ with very negative Yamabe constant in such way that the conformal structure on the connected sum
$M^n\sharp M^n_1$ is unchanged in $D\subset M^n\sharp M^n_1$. Then, by \cite[Theorem 5]{Gil}, the Yamabe constant of
such connected sum is negative. Therefore one easily finds a conformal metric $\bar h = v^\frac 4{n-2}\bar g$ whose scalar curvature
is negative in $D$, where the function $v\in C^\infty(\bar{D})$ and 
\begin{equation}\label{Equ:lower-upper-v}
C^{-1} \leq v \leq C \text{ in $\bar {D}$}
\end{equation}
for some positive constant $C$. In any cases, we have $g = u^\frac 4{n-2}\bar g = (\frac uv)^\frac 4{n-2} \bar h$ 
and the scalar curvature $R[\bar h]$ is nonpositive. In conclusion, due to \eqref{Equ:lower-upper-v}, 
we may simply assume $R[\bar g]$ is nonpositive (or even negative) in $D$ without
loss of any generality for the purpose of obtaining the growth estimate like the one
given in Theorem \ref{Thm:main-estimate}.

\vskip 0.1in\noindent
{\bf Step II} \quad In this step, we use the Green function to construct the integral representation of the solution $u$. In the light of
Lemma \ref{Lem:radon-measure}, we may write
$$
-\Delta u =  \mu \text{ in $D$}
$$
for a Radon measure $\mu$ on $D$. Let $G(x, y)$ be the Green function on $D$ given by \cite[Theorem 4.17]{Au}. Then
$$
u = \int_D G(x, y)d\mu(y) + h
$$
for a smooth function $h$ that is harmonic in $D$. By \cite[Theorem 4.17 (c)]{Au}, we have
$$
0< G(x, y) \leq  \frac C{d(x, y)^{n-2}}
$$
for some constant $C$ and $x, y\in D$. We therefore arrive at, for $x\in D$, 
\begin{equation}\label{Equ:decomposition-u}
u(x)  \leq \int_D G(x, y)d\mu^+ + h(x) \leq C\mathscr{R}^{2, D}_{\mu^+} (x) + h(x).
\end{equation}

\noindent {\bf Step III}\quad Assume otherwise that $\text{dim}_{\mathscr{H}} (S) = d > \frac {n-2}2$. 
From Theorem \ref{Thm:main-estimate}, there is a point $p\in S$ such that 
$$
\mathscr{R}^{2, D}_{\mu^+} (x) \leq \frac C{d(x, p)^{n-2-d}}
$$
at least for $x$ along a short geodesic ray $\gamma$ from $p$, which implies 
\begin{equation}\label{Equ:punch-line}
u(x)^\frac 2{n-2}  \leq \frac C{d(x, p)^\frac {2(n-2-d)}{n-2}}
\end{equation}
at least for $x$ along a short geodesic ray $\gamma$ from $p$, where
$$
\frac {2(n-2-d)}{n-2} = 2 - \frac {2d}{n-2} < 1
$$
when $d> \frac {n-2}2$. Now the length of the curve $\gamma$ with respect to the conformal metric $g = u^\frac 4{n-2} \bar g$ is
$$
L(\gamma, g) \leq  C \int_0^{l_0} \frac 1{s^\frac {2(n-2-d)}{n-2}} ds < \infty
$$
when $d> \frac {n-2}2$, which contradicts with the geodesic completeness of the conformal metric $g = u^\frac 4{n-2}\bar g$. 
The proof is complete.
\end{proof}

The study of singular solutions to the scalar curvature equations started from the seminal paper \cite{SY} (see also 
\cite[Chapter VI]{SYb} \cite{Ca} and  \cite{Sc, MS, MP}) on domains of the sphere. 
Theorem \ref{Thm:main-result-R} here can be considered as a necessary condition for the existences of singular solutions on
domains in general Riemannian manifolds and compared with \cite[Theorem 2.7]{SY} and \cite[Theorem C]{Ca}, which stated the 
similar result for domains in the round sphere $S^n$ and slightly stronger curvature assumptions. Clearly \cite[Proposition 2.4]{SY} 
and the quantity $d(M)$ there are not of local nature, while our approach here is very much local in nature.


\section{On $Q$-curvature equations}\label{Sec:Q-curvature}

In this section we will use linear potential theory developed in Section \ref{Sec:potential-capacity} 
to study $Q$-curvature equations and prove our results on the Hausdorff dimensions of the singular sets of
positive solutions of $Q$-curvature equations which correspond to ends of complete conformal metrics on domains of a compact
Riemannian manifold. 

Again we remark here that all of the results in this section hold if we assume $S$ is compact, $D\subset M^n$ is a bounded domain that
contains $S$, and $(M^n, \bar g)$ is just complete. Because the possible noncompact part $M^n\setminus \bar D$ is not relevant 
for the purpose here.


\subsection{$Q$-curvature equations in dimensions greater than $4$} In this subsection we focus on the equation 
\eqref{Equ:Q-curvature-equation} in dimensions greater than $4$. We will always assume that the scalar curvature of  
the conformal metric $g= u^\frac 4{n-4} \bar g$ is nonnegative. We will first prove some preliminary estimates based on 
discussions in the previous section. Our strategy is to consider the bi-Laplace operator as the composition of the Laplace 
operators. Let us write the scalar curvature equation and its consequence:
\begin{equation}\label{Equ:scalar-curvature-v}
-\Delta u^\frac {n-2}{n-4} + \frac {n-2}{4(n-1)} Ru^\frac {n-2}{n-4} = \frac {n-2}{4(n-1)}R[g]u^\frac {n+2}{n-4} \text{ in $D\setminus S$}
\end{equation}
and
\begin{equation}\label{Equ:laplace-u}
- \Delta u =  \frac {2}{n-4} \frac {|\nabla u|^2}u +\frac {n-4}{4(n-1)}(- Ru +  R[g] u^\frac {n}{n-4})  \quad \text{ in $D\setminus S$}.
\end{equation}
Here, and from now on in the following, all geometric quantities are under the background metric $\bar g$ unless indicated otherwise.

\begin{lemma}\label{Lem:prelim-estimate}
Let $(M^n, \bar g)$ be a compact Riemannian manifold for $n\geq 5$ and $S$ be a 
closed subset in $M^n$. And let $D$ be an open neighborhood of $S$ where the scalar curvature $R \leq -c_0 < 0$.  
Suppose that $g = u^\frac 4{n-4} \bar g$ is a conformal metric on $D\setminus S$ with nonnegative scalar curvature $R[g]\geq 0$ 
and is geodesically complete near $S$. And suppose also that
$$
Q_4^-[g] \in L^\frac {2n}{n+4}(D\setminus S, g).
$$
Then
\begin{equation}\label{Equ:prelim-estimate}
\aligned
& \text{as a function on $D\setminus S$, } -\Delta  u  \to +\infty \text{ as $x\to S$};\\
& \text{as a Radon measure on $D$, }  \Delta u |_S = 0 ;\\
& \text{in fact, } \Delta  u \in L^p(D) \text{ for any $p\in [1, \frac n{n-2})$}.
\endaligned
\end{equation}
\end{lemma}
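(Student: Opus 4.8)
\textbf{Proof proposal for Lemma \ref{Lem:prelim-estimate}.}

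The plan is to treat the fourth-order problem by iterating the second-order analysis of Section \ref{Sec:scalar-curvature}. Set $v = u^{\frac{n-2}{n-4}}$, so that equation \eqref{Equ:scalar-curvature-v} says $-\Delta v + \frac{n-2}{4(n-1)} R v = \frac{n-2}{4(n-1)} R[g] u^{\frac{n+2}{n-4}}$, which is exactly the scalar curvature equation for the conformal factor $v$ relative to $\bar g$ (with conformal metric $g = v^{\frac{4}{n-2}}\bar g$, since $v^{\frac{4}{n-2}} = u^{\frac{4}{n-4}}$). The hypothesis $Q_4^-[g] \in L^{\frac{2n}{n+4}}(D\setminus S, g)$ is the natural $Q$-curvature analogue of the $L^{\frac{2n}{n+2}}$ condition on $R^-$; the first thing I would check is that, via the $Q$-curvature equation \eqref{Equ:Q-curvature-equation} together with $R[g]\ge 0$, this integrability of $Q_4^-[g]$ forces the right-hand side of \eqref{Equ:scalar-curvature-v} (or rather its negative part) to lie in the $L^p$ classes needed to invoke Lemma \ref{Lem:lemma-3.1}, Lemma \ref{Lem:radon-measure}, and Lemma \ref{Lem:L^1-gradient} for $v$ in place of $u$. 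This likely requires a preliminary $L^p$ bound on $u$ coming from $R[g]\ge 0$ and the geodesic completeness near $S$ (so that $u\to\infty$ at $S$ is already available from the scalar curvature side). Having verified the hypotheses, Lemma \ref{Lem:lemma-3.1} gives $v\to+\infty$ as $x\to S$, hence $u\to+\infty$ as $x\to S$; Lemma \ref{Lem:radon-measure} gives that $-\Delta v$ is a Radon measure on $D$ with $-\Delta v|_S \ge 0$; and Lemma \ref{Lem:L^1-gradient} gives $\nabla v \in L^p(D)$ for $p < \frac{n}{n-1}$ and $v\in L^q(D)$ for $q < \frac{n}{n-2}$.

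Next I would transfer these conclusions from $v$ back to $u$ using equation \eqref{Equ:laplace-u}. Write $-\Delta u = \frac{2}{n-4}\frac{|\nabla u|^2}{u} + \frac{n-4}{4(n-1)}(-Ru + R[g]u^{\frac{n}{n-4}})$. The term $-Ru \ge c_0 u > 0$ is where the divergence $-\Delta u \to +\infty$ comes from: since $u\to\infty$ at $S$ and $R\le -c_0<0$, the right-hand side blows up, which establishes the first assertion of \eqref{Equ:prelim-estimate}. For the $L^p$ statement and the vanishing of $\Delta u|_S$ on $S$, I would control each term on the right of \eqref{Equ:laplace-u} in $L^1(D)$ (or better): the relation between $\Delta v$ and $\Delta u$ is $-\Delta v = -\Delta u^{\frac{n-2}{n-4}} = \frac{n-2}{n-4}u^{\frac{2}{n-4}}(-\Delta u) - \frac{n-2}{n-4}\cdot\frac{2}{n-4}u^{\frac{6-n}{n-4}}|\nabla u|^2$, so knowing that $-\Delta v$ is a finite measure and controlling the gradient term yields integral control of $u^{\frac{2}{n-4}}(-\Delta u)$, and since $u\ge$ const $>0$ away from nothing (indeed $u$ is large near $S$) this upgrades to control of $-\Delta u$ itself on $D$. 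The cleaner route, which I would actually carry out, is to run the $\alpha_s$-truncation argument of Lemma \ref{Lem:radon-measure} and Lemma \ref{Lem:L^1-gradient} directly on equation \eqref{Equ:laplace-u}: multiply by a cutoff, use $R[g]\ge 0$ to drop the sign-definite curvature term to one side, absorb $\int |\nabla u|^2/u$ using the truncation (the term $-\alpha_s''(u)|\nabla u|^2 \ge 0$ helps here just as in the proof of Lemma \ref{Lem:radon-measure}), and bound $\int R[g]u^{\frac{n}{n-4}}$ by feeding back the $Q$-curvature equation. This gives $\|\nabla\alpha_s(u)\|_{L^p}$ and $\|\Delta\alpha_s(u)\|_{L^1}$ uniformly in $s$, hence by Fatou and the dominated-convergence passage of \eqref{Equ:distributional-sense} that $\Delta u$ is a Radon measure with $\Delta u|_S = 0$, and in fact $\Delta u \in L^p(D)$ for $p<\frac{n}{n-2}$, as claimed.

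I expect the main obstacle to be the nonlinear gradient term $\frac{2}{n-4}\frac{|\nabla u|^2}{u}$ in \eqref{Equ:laplace-u}: unlike the pure scalar curvature equation, this term is not a priori in $L^1$, and a naive application of the $\alpha_s$-truncation handles $-\Delta u$ but not obviously the gradient nonlinearity. The resolution I anticipate is twofold — first, $|\nabla u|^2/u = u|\nabla\log u|^2$ and the quantity $\nabla v = \frac{n-2}{n-4}u^{\frac{2}{n-4}}\nabla u$ carries exactly the information that makes $u^{\frac{6-n}{n-4}}|\nabla u|^2$ integrable once $\nabla v$ is controlled; and second, the truncation argument produces a favorable sign $-\alpha_s''(u)|\nabla u|^2\ge 0$ that, combined with the lower bound $-R u \ge c_0 u$, lets one absorb the gradient term rather than estimate it crudely. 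Making this absorption quantitative, with constants independent of $s$ and of how close one is to $S$, is the delicate point; everything else is a faithful repetition of the machinery already established for the scalar curvature equation in Section \ref{Sec:scalar-curvature}.
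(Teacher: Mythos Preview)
Your outline for the first two conclusions is essentially the paper's: apply the second-order machinery (Lemma \ref{Lem:lemma-3.1}, Lemma \ref{Lem:radon-measure}, Lemma \ref{Lem:L^1-gradient}) to $v=u^{\frac{n-2}{n-4}}$, then read off information about $u$ from \eqref{Equ:laplace-u}. One simplification you are missing: since $R[g]\ge 0$ by hypothesis, $R^-[g]\equiv 0$, so the integrability conditions in those lemmas hold trivially for $v$ --- there is nothing to check via $Q_4^-[g]$ at this stage. With that, $u\to\infty$ at $S$, $-\Delta u\to\infty$ at $S$ via \eqref{Equ:laplace-u}, and $-\Delta u$ is a Radon measure with $-\Delta u|_S=0$ exactly as you say (the paper makes the latter precise by writing $-\Delta\alpha_s(u)=\tfrac{n-4}{n-2}\alpha_s(u)^{-2/(n-4)}(-\Delta\alpha_s(u)^{\frac{n-2}{n-4}})+\tfrac{2}{n-4}|\nabla\alpha_s(u)|^2/\alpha_s(u)$).

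The genuine gap is the $L^p$ claim for $p>1$. Running the $\alpha_s$-truncation on \eqref{Equ:laplace-u} gives $\Delta u$ as a finite measure and the right-hand side in $L^1(D)$, but no better; nothing in the scalar-curvature equation alone upgrades this to $L^p$ for $p$ close to $\tfrac{n}{n-2}$. The paper instead uses the $Q$-curvature equation \eqref{Equ:Q-curvature-equation} directly: multiply it by the level-set test function $\phi_{\alpha,\beta}$, integrate by parts (the boundary term on $\partial\Sigma_{\alpha+\beta}$ has a favorable sign thanks to \eqref{Equ:laplace-u}), and obtain
\[
\int_{\Sigma_\alpha\setminus\Sigma_{\alpha+\beta}}|\Delta u|^2\,dvol \;\le\; C\int_{\Sigma_\alpha\setminus\Sigma_{\alpha+\beta}}|\nabla u|^2\,dvol + C\beta.
\]
The gradient integral is then absorbed by Cauchy--Schwarz and \eqref{Equ:laplace-u} (since $|\nabla u|^4/u^2\le C|\Delta u|^2$), giving $\int_{\Sigma_\alpha\setminus\Sigma_{\alpha+\beta}}|\Delta u|^2\le C(\alpha+\beta)$. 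A dyadic summation over $\Sigma_{2^{i-1}}\setminus\Sigma_{2^i}$ then yields $\int |\Delta u|^2/u^{1+s}<\infty$ for small $s>0$, and H\"older against $u\in L^q$ for $q<\tfrac{n}{n-4}$ gives $\Delta u\in L^p$ for $p<\tfrac{n}{n-2}$. This is where the $Q_4^-[g]\in L^{\frac{2n}{n+4}}$ hypothesis is actually used (to make $Q_4^-[g]u^{\frac{n+4}{n-4}}\in L^1$); your proposal invokes the $Q$-curvature equation only peripherally and misses this mechanism entirely.
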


\begin{proof} First, using Lemma \ref{Lem:L^1-gradient} for $u^\frac {n-2}{n-4}$, we know that 
\begin{equation}\label{Equ:L^p-u}
u\in L^p(D) \text{ for $p\in [1, \frac n{n-4})$}.
\end{equation}
Also, from Lemma \ref{Lem:lemma-3.1} for $u^\frac {n-2}{n-4}$, 
\begin{equation}\label{Equ:infinite}
u(x)\to +\infty \text{ as $x\to S$},
\end{equation}
which implies, by \eqref{Equ:laplace-u},
$$
-\Delta u \to +\infty \text{ as $x\to S$}.
$$
To prove $-\Delta u$ is an integrable function in distributional sense, we first realize $-\Delta u$ is a Radon measure
on $D$ following \eqref{Equ:laplace-u} and Lemma \ref{Lem:radon-measure}.  And, as a side product, we also have
$$
\int_{D} [\frac 2{n-4} \frac {|\nabla u|^2}u + \frac {n-4}{4(n-1)}(- Ru +  R[g] u^\frac {n}{n-4})] dvol< \infty.
$$
In fact, from \eqref{Equ:scalar-curvature-v} and 
Lemma \ref{Lem:radon-measure}, we also know $-\Delta u^\frac {n-2}{n-4}$ is a Radon measure on $D$. 
To use this fact we calculate
$$
-\Delta \alpha_s(u) = -\Delta (\alpha_s(u)^\frac {n-2}{n-4})^\frac {n-4}{n-2} 
= \frac {n-4}{n-2} \alpha_s(u)^{-\frac 2{n-4}}(-\Delta \alpha_s(u)^\frac {n-2}{n-4}) + \frac 2{n-4} \frac {|\nabla\alpha_s(u)|^2}{\alpha_s(u)}.
$$
To prove $-\Delta u|_S = 0$, we consider
$$
(-\Delta u )(\phi) = \int_D \nabla u \cdot\nabla\phi dvol
$$
where $\nabla u$ is integrable in distributional sense directly from \eqref{Equ:integrable-gradient-u} 
and \eqref{Equ:gradient-u-integrable}. Therefore
$$
\int_D \nabla u \cdot\nabla\phi dvol = \lim_{s\to\infty} \int_D \alpha_s'(u) \nabla u \cdot\nabla\phi dvol = 
\lim_{s\to\infty}\int_D (-\Delta \alpha_s(u) )\phi dvol
$$
and
$$
\aligned
(-\Delta u)(\phi) & =  \frac {n-4}{n-2} \lim_{s\to\infty} \int_D \alpha_s(u)^{-\frac 2{n-4}}(-\Delta \alpha_s(u)^\frac {n-2}{n-4})\phi dvol\\
& \quad  + \frac 2{n-4} \lim_{s\to\infty}\int_D \frac {|\nabla\alpha_s(u)|^2}{\alpha_s(u)}\phi dvol\\
& =  \frac {n-4}{n-2} u^{-\frac 2{n-4}}(-\Delta u^\frac {n-2}{n-4})(\phi)  + \frac 2{n-4} \int_D \frac {|\nabla u|^2}{u}\phi dvol \\
& \to 0
\endaligned
$$
as $\int_{\text{supp}\phi\setminus S}dvol[\bar g] \to 0$ and $\|\phi\|_{C^0(D)}\leq 1$. 
The proof will be complete after the following $L^p$ estimate. To get the $L^p$ estimate, we first calculate
\begin{equation}\label{Equ:L^1-Q^-}
\aligned
\int_{D\setminus S} Q_4^-[g] u^\frac {n+4}{n-4} dvol & 
= (\int_{D\setminus S} (Q_4^-[g])^\frac {2n}{n+4} u^\frac {2n}{n-4}dvol)^\frac {n+4}{2n}
\text{vol}(D)^\frac {2n}{n-4}\\
& =  (\int_{D\setminus S} (Q_4^-[g])^\frac {2n}{n+4} dvol[g])^\frac {n+4}{2n}
\text{vol}(D)^\frac {2n}{n-4} < \infty.
\endaligned
\end{equation}
Then we continue to use notations in the proof of Proposition \ref{Prop:newton-capacity} and let 
$$
\alpha = \max\{u(x): x\in \partial D\}
$$
and $\alpha < \beta$. And recall
$$
u_{\alpha, \beta} = \left\{\aligned \beta & \quad x\in \Sigma_{\alpha+\beta};\\
u(x) - \alpha & \quad x\in \Sigma_\alpha\setminus \Sigma_{\alpha+ \beta}\endaligned\right.
$$
and 
$$
\phi_{\alpha, \beta} = \left\{\aligned  u_{\alpha, \beta} - \beta \eta & = u - (\alpha+\beta) + \beta(1-\eta), 
 \text{ in $\Sigma_\alpha\setminus\Sigma_{\alpha+\beta}$}\\
& = 0  \text{ on $\partial\Sigma_\alpha$}\\
& = 0  \text{ on $\partial\Sigma_{\alpha+\beta}$}
\endaligned\right.,
$$
where $\eta$ is a fixed cut-off function in $C^\infty_c(\Sigma_\alpha)$ 
and is identically one in a neighborhood of $S$, and $\beta$ is arbitrarily large.
We now first multiply $1- \eta$ to the $Q$-curvature equation 
\eqref{Equ:Q-curvature-equation}, 
integrate over $D$, apply integral by parts multiple times, and get
\begin{equation}\label{Equ:positive-part-integrable}
\int_D (1-\eta)Q_4^+u^\frac {n+4}{n-4} dvol \leq \int_D Q_4^-u^\frac {n+4}{n-4}dvol + C
\end{equation}
for some constant $C$ depending on the cut-off function $\eta$, $u$ at $\partial D$, and $\|u\|_{L^1(D)}$.
We then multiply $\phi_{\alpha, \beta}$ to both sides of the  
$Q$-curvature equation \eqref{Equ:Q-curvature-equation}, integrate over $\Sigma_\alpha\setminus\Sigma_{\alpha+\beta}$, and get
\begin{equation}\label{Equ:integral-estimate}
\aligned
\int_{\Sigma_\alpha\setminus \Sigma_{\alpha+\beta}} &  \Delta u \Delta \phi_{\alpha, \beta} dvol  
- \int_{\partial \Sigma_\alpha} \Delta u \frac {\partial u} {\partial \nu} d\sigma - 
\int_{\partial \Sigma_{\alpha+\beta}} \frac{\partial u}{\partial\nu} \Delta u d\sigma\\
& -  \int_{\Sigma_\alpha\setminus\Sigma_{\alpha+\beta}} (4 A(\nabla u, \nabla \phi_{\alpha, \beta})  
- (n-2)J\nabla u\cdot\nabla\phi_{\alpha, \beta}) dvol \\
& + \frac {n-4}2 \int_{\Sigma_\alpha\setminus\Sigma_{\alpha+\beta}} Q u\phi_{\alpha, \beta} dvol
=  \frac {n-4}2 \int_{\Sigma_\alpha\setminus\Sigma_{\alpha+\beta}}  Q_4[g] u^\frac {n+4}{n-4}\phi_{\alpha, \beta} dvol,
\endaligned
\end{equation}
where $\nu$ is the outward normal direction at the boundary and the boundary term 
$\int_{\partial \Sigma_{\alpha+\beta}} \frac{\partial u}{\partial\nu} (-\Delta u) d\sigma$ is nonnegative due
to \eqref{Equ:laplace-u} and $\frac {\partial u}{\partial \nu}|_{\partial \Sigma_{\alpha+\beta}} = |\nabla u|$.
Therefore, 
\begin{equation}\label{Equ:integral-estimate-2}
\aligned
\int_{\Sigma_\alpha\setminus \Sigma_{\alpha+\beta}}  & (\Delta u)^2 dvol + \beta 
\int_{\Sigma_\alpha\setminus \Sigma_{\alpha+\beta}} (\Delta u) (\Delta (1-\eta)) dvol\\
& \leq - \int_{\partial \Sigma_\alpha} (-\Delta u) \frac {\partial u} {\partial \nu} d\sigma  +  
C\int_{\Sigma_\alpha\setminus \Sigma_{\alpha+\beta}} |\nabla u|^2 dvol  \\
& \quad - \beta \int_{\Sigma_\alpha} (4 A(\nabla u, \nabla (1-\eta))  
- (n-2)J\nabla u\cdot\nabla (1-\eta)) dvol \\
& \quad + C \beta \int_D u\, dvol  +  C\beta \int_D  Q_4^-[g] u^\frac {n+4}{n-4} dvol,
\endaligned
\end{equation}
where we use \eqref{Equ:positive-part-integrable} and 
$|\phi|\leq \beta$ in $\Sigma_\alpha\setminus\Sigma_{\alpha+\beta}$. 
After applying integral by parts, we get, 
\begin{equation}\label{Equ:lead-to-dim}
\int_{\Sigma_\alpha\setminus \Sigma_{\alpha+\beta}} |\Delta u|^2 dvol \leq  C 
 \int_{\Sigma_\alpha\setminus \Sigma_{\alpha+\beta}}|\nabla u|^2 dvol + C\beta
\end{equation}
for some constant $C$ depending on the cut-off function $\eta$, $u$ at $\partial \Sigma_\alpha$, and $\|u\|_{L^1(D)}$, because 
$$
\int_{\Sigma_\alpha\setminus\Sigma_{\alpha+\beta}} \Delta u \Delta \eta dvol
= \int_{\Sigma_\alpha\setminus\Sigma_{\alpha+\beta}}  u \Delta^2\eta dvol
$$
and similarly we may unload all derivatives from $u$ by integral by parts for the other terms in the above 
\eqref{Equ:integral-estimate-2}. Now, to get a priori estimate, we calculate
$$
\aligned
\int_{\Sigma_\alpha\setminus\Sigma_{\alpha+\beta}} |\nabla u|^2 dvol & \leq \frac 1{(n-4)C} 
\int_{\Sigma_\alpha\setminus \Sigma_{\alpha+\beta}} 
\frac {|\nabla u|^4}{u^2} dvol + \frac {(n-4)C}{4} \int_{\Sigma_\alpha\setminus \Sigma_{\alpha+\beta}}  u^2 dvol\\
& \leq \frac 1{2C} \int_{\Sigma_\alpha\setminus \Sigma_{\alpha+\beta}} |\Delta u|^2 dvol
+ \frac {(n-4)C}{4} (\alpha+\beta) \int_D  u\, dvol,
\endaligned
$$
due to \eqref{Equ:laplace-u}, which implies, from \eqref{Equ:lead-to-dim},
\begin{equation}\label{Equ:important-step}
\int_{\Sigma_\alpha\setminus \Sigma_{\alpha+ \beta}} |\Delta u|^2 dvol \leq C(\alpha+\beta).
\end{equation}
We claim that \eqref{Equ:important-step} implies 
\begin{equation}\label{Equ:iteration}
\Delta u \in L^p(D)
\end{equation}
for all $p\in [1, \frac n{n-2})$. To prove \eqref{Equ:iteration}, we first derive from \eqref{Equ:important-step}, 
$$
2^{-i} \int_{\Sigma_{2^{i-1}}\setminus\Sigma_{2^{i}}} |\Delta u|^2 dvol\leq C,
$$
for $i\geq i_0$ large, which implies
$$
\int_{\Sigma_{2^{i-1}}\setminus\Sigma_{2^{i}}} \frac {|\Delta u|^2}u dvol \leq 2C
$$
and, for $s>0$ appropriately small for any $p\in [1, \frac n{n-2})$, 
$$
\aligned
\int_{\Sigma_{2^{i_0-1}} \setminus S} & \frac {|\Delta u|^2}{u^{1+s}} dvol  = 
\sum_{i=i_0}^\infty \int_{\Sigma_{2^{i-1}}\setminus\Sigma_{2^i}} \frac {|\Delta u|^2}{u^{1+ s}} dvol \\
& \leq \sum_{i=i_0}^\infty 2^{s(-i+1)} \int_{\Sigma_{2^{i-1}}\setminus\Sigma_{2^{i}}} \frac {|\Delta u|^2}u dvol < \infty.
\endaligned
$$
Thus
$$
\int_{D\setminus S} |\Delta u|^p dvol \leq (\int_{D\setminus S} \frac {|\Delta u|^2}{u^{1+s}} dvol)^\frac p2 (\int_{D\setminus S} 
u^\frac {(1+s)p}{2-p} dvol)^{1 - \frac p2}< \infty
$$
where
$$
\frac {(1+s)p}{2-p} < \frac n{n-4}.
$$
\end{proof}

\begin{corollary}\label{Cor:prelim-dim} Under the same assumptions as in Lemma \ref{Lem:prelim-estimate} we have
$$
\text{dim}_{\mathscr{H}} (S) \leq n-4.
$$
\end{corollary}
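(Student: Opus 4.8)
The plan is to run the capacity argument of Proposition~\ref{Prop:newton-capacity} one differential order higher. In place of the Dirichlet energy $\int|\nabla\psi|^2$ (whose vanishing over competitors that are identically $1$ near $S$ yields $\dim_{\mathscr H}(S)\le n-2$), I will show that the fourth--order variational capacity
\[
\mathrm{cap}_\Delta(S,D)=\inf\Big\{\int_D|\Delta\psi|^2\,dvol:\ \psi\in C^\infty_c(D),\ \psi\equiv 1 \text{ near } S\Big\}
\]
vanishes. On the bounded set $D$ the Calder\'on--Zygmund inequality $\|\nabla^2\psi\|_{L^2}\le C\|\Delta\psi\|_{L^2}$ together with the Poincar\'e inequality makes $\int_D|\Delta\psi|^2\,dvol$ comparable to $\|\psi\|_{H^2_0(D)}^2$ for $\psi\in C^\infty_c(D)$, so $\mathrm{cap}_\Delta(S,D)=0$ forces the Bessel $(2,2)$--capacity of $S$ to vanish, and a set of vanishing $(2,2)$--capacity has $\dim_{\mathscr H}\le n-2\cdot 2=n-4$ when $n\ge 5$ (cf.\ \cite{AM,AH96}). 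As in Section~\ref{Sec:scalar-curvature} it suffices to argue locally, in a normal coordinate chart (or a finite cover of the compact set $S$ by such charts).

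To produce the competitors, I fix a large level $\alpha$ so that $\Sigma_\alpha=\{u>\alpha\}$ is a neighbourhood of $S$ with $\overline{\Sigma_\alpha}$ compact in $D$, fix a smooth profile $H$ with $H\equiv 0$ on $(-\infty,0]$ and $H\equiv 1$ on $[1,\infty)$, and for $\beta\to\infty$ set $\psi_\beta=H\big((u-\alpha)/\beta\big)$. By \eqref{Equ:infinite}, $u\to+\infty$ near $S$, so $\psi_\beta\equiv 1$ on the neighbourhood $\{u\ge\alpha+\beta\}$ of $S$, while $\operatorname{supp}\psi_\beta\subset\overline{\Sigma_\alpha}\subset D$; since $u$ is smooth on $D\setminus S$ and $\psi_\beta$ is locally constant near $S$, we get $\psi_\beta\in C^\infty_c(D)$. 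Now
\[
\Delta\psi_\beta=\tfrac1\beta H'\!\big(\tfrac{u-\alpha}\beta\big)\,\Delta u+\tfrac1{\beta^2}H''\!\big(\tfrac{u-\alpha}\beta\big)\,|\nabla u|^2,
\]
and both derivative factors are supported in the annular region $\Sigma_\alpha\setminus\Sigma_{\alpha+\beta}=\{\alpha<u<\alpha+\beta\}$. With $\alpha$ held fixed, the first term contributes at most $C\beta^{-2}\int_{\Sigma_\alpha\setminus\Sigma_{\alpha+\beta}}|\Delta u|^2\,dvol\le C\beta^{-1}$ by \eqref{Equ:important-step}. For the second term, the standing hypotheses $R\le-c_0<0$ and $R[g]\ge0$ fed into \eqref{Equ:laplace-u} give the pointwise bound $|\nabla u|^2\le\tfrac{n-4}2\,u\,(-\Delta u)$, hence $|\nabla u|^4\le C\beta^2|\Delta u|^2$ on $\{u<\alpha+\beta\}$, so this term contributes at most $C\beta^{-4}\int_{\Sigma_\alpha\setminus\Sigma_{\alpha+\beta}}|\nabla u|^4\,dvol\le C\beta^{-2}\int_{\Sigma_\alpha\setminus\Sigma_{\alpha+\beta}}|\Delta u|^2\,dvol\le C\beta^{-1}$, again by \eqref{Equ:important-step}. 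Thus $\int_D|\Delta\psi_\beta|^2\,dvol\to0$, so $\mathrm{cap}_\Delta(S,D)=0$ and $\dim_{\mathscr H}(S)\le n-4$.

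The one real obstacle is keeping the competitors in $H^2$. Truncating $u$ itself, as with the functions $u_{\alpha,\beta}$ of Section~\ref{Sec:scalar-curvature}, introduces a jump of $\nabla u$ across the level sets $\{u=\alpha\}$ and $\{u=\alpha+\beta\}$, hence a surface--measure term in $\Delta\psi$ which is not in $L^2$; composing a \emph{fixed} smooth profile $H$ with $u$ removes this defect, at the price of the extra term $\beta^{-2}H''|\nabla u|^2$. Controlling that term is exactly where the geometric input \eqref{Equ:laplace-u} enters, through $|\nabla u|^2\lesssim u(-\Delta u)$; once this and the energy bound \eqref{Equ:important-step} are available, the remaining capacity--to--dimension step is routine.
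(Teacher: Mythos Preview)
Your argument is correct and rests on the same two ingredients the paper uses, namely the pointwise inequality coming from \eqref{Equ:laplace-u} (equivalently $|\nabla u|^4\le C u^2|\Delta u|^2$) and the energy bound \eqref{Equ:important-step}. The difference is in which capacity you feed these into. The paper shows $\mathrm{Cap}_4(S,D)=0$ by testing with the Lipschitz truncations $u_{\alpha,\beta}/\beta$ directly and estimating
\[
\int_{\Sigma_\alpha\setminus\Sigma_{\alpha+\beta}}\Big|\nabla\frac{u_{\alpha,\beta}}{\beta}\Big|^4dvol
\le \frac{(\alpha+\beta)^2}{\beta^4}\int_{\Sigma_\alpha\setminus\Sigma_{\alpha+\beta}}\frac{|\nabla u|^4}{u^2}dvol
\le \frac{(\alpha+\beta)^2}{\beta^4}\int_{\Sigma_\alpha\setminus\Sigma_{\alpha+\beta}}|\Delta u|^2dvol
\le C\,\frac{(\alpha+\beta)^3}{\beta^4}\to 0,
\]
then invokes $\mathrm{Cap}_4=0\Rightarrow\dim_{\mathscr H}\le n-4$. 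You instead target the Bessel $(2,2)$-capacity via $\int|\Delta\psi|^2$, which forces you to smooth the competitors with a profile $H$ and to control the extra term $\beta^{-2}H''|\nabla u|^2$; you do this correctly using the same pointwise bound. The paper's route is slightly shorter because $W^{1,4}$ competitors suffice for $\mathrm{Cap}_4$ and no smoothing or Calder\'on--Zygmund step is needed; your route has the mild advantage that it lands directly in the $H^2$ framework, which is natural for the bi-Laplacian, and your discussion of why the Lipschitz truncation fails for the second-order capacity is a genuine observation.
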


\begin{proof} Consequently from \eqref{Equ:laplace-u} and \eqref{Equ:important-step}, we have
$$
\aligned
\int_{\Sigma_\alpha\setminus \Sigma_{\alpha+\beta}} |\nabla \frac {u_{\alpha, \beta}}\beta |^4dvol 
& \leq \frac {(\alpha+\beta)^2}{\beta^4} \int_{\Sigma_\alpha\setminus \Sigma_{\alpha+\beta}}  \frac {|\nabla u|^4}{u^2} dvol\\
& \leq \frac {(\alpha+\beta)^2}{\beta^4}  \int_{\Sigma_\alpha\setminus \Sigma_{\alpha+\beta}} |\Delta u|^2 dvol \\
& \leq C \frac {(\alpha+\beta)^3}{\beta^4} 
\endaligned
$$
for some $\alpha$ appropriately large and $\beta\to\infty$, 
which leads to $\text{Cap}_4(S) = 0$ and completes the proof similar to the proof of Proposition \ref{Prop:newton-capacity}
 (cf. \cite{AM} and \cite[Theorem 2.10 in Chapter VI]{SYb}).
\end{proof} 

\begin{lemma}\label{Lem:radon-measure-Q}
Let $(M^n, \bar g)$ be a compact Riemannian manifold for $n\geq 5$ and $S$ be a 
closed subset in $M^n$. And let $D$ be an open neighborhood of $S$ where the scalar curvature $R[\bar g] \leq -c_0 < 0$.  
Suppose that $g = u^\frac 4{n-4} \bar g$ is a conformal metric on $D\setminus S$ with nonnegative scalar curvature $R[g]\geq 0$ 
and is geodesically complete near $S$. And suppose also that
$$
Q_4^-[g] \in L^\frac {2n}{n+4}(D\setminus S, g).
$$
Then $\Delta^2  u$ is a Radon measure on $D$ and $\Delta^2  u|_S \geq 0$.
\end{lemma}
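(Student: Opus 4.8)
The plan is to treat $w:=-\Delta u$ in essentially the way $u$ was treated in Lemma~\ref{Lem:radon-measure}. By Lemma~\ref{Lem:prelim-estimate} we already know that $w\in C^\infty(D\setminus S)$, that $w\to+\infty$ as $x\to S$, and that $w=-\Delta u\in L^p(D)$ for every $p\in[1,\frac n{n-2})$; in particular $w\in L^1(D)$. Since $P_4=\Delta^2+\text{div}(4A\cdot\nabla-(n-2)J\nabla)+\frac{n-4}2Q_4$, the $Q$-curvature equation \eqref{Equ:Q-curvature-equation} gives, in $D\setminus S$,
$$-\Delta w=\Delta^2 u=F:=-\text{div}\bigl(4A\nabla u-(n-2)J\nabla u\bigr)-\frac{n-4}2Q_4u+\frac{n-4}2Q_4[g]u^{\frac{n+4}{n-4}}.$$
The first task is to show $F\in L^1(D)$. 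From $\Delta u\in L^p(D)$ together with $u\in L^p(D)$ (the latter by \eqref{Equ:L^p-u}), interior elliptic ($L^p$) estimates give $u\in W^{2,p}_{\text{loc}}(D)$, so after replacing $D$ by a slightly smaller neighborhood of $S$ the third-order term $\text{div}(4A\nabla u-(n-2)J\nabla u)$ is a genuine $L^1(D)$ function; $Q_4u\in L^1(D)$ trivially; and $Q_4^-[g]u^{\frac{n+4}{n-4}}\in L^1(D)$ by \eqref{Equ:L^1-Q^-}. The only delicate term is $Q_4^+[g]u^{\frac{n+4}{n-4}}$, whose integrability near $S$ is not provided by \eqref{Equ:positive-part-integrable}.

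To obtain it I would reuse the cutoff-and-Fatou device from the proof of Lemma~\ref{Lem:radon-measure}, now applied to $w$. With $\alpha_s$ as in the proof of Lemma~\ref{Lem:radon-measure} ($\alpha_s'\in[0,1]$, $\alpha_s''\le0$), the function $\alpha_s(w)$ is smooth on all of $D$, since $w\ge10s$ near $S$ forces $\alpha_s(w)\equiv2s$ there. For $s>\max_{\partial D}w$ one has $\alpha_s(w)=w$ near $\partial D$, so integrating $-\Delta\alpha_s(w)=\alpha_s'(w)F-\alpha_s''(w)|\nabla w|^2$ over $D$ yields $\int_D\alpha_s'(w)F\,dvol\le-\int_{\partial D}\frac{\partial w}{\partial\nu}d\sigma$ uniformly in $s$, because $\int_D(-\alpha_s''(w))|\nabla w|^2dvol\ge0$. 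Decomposing $F$ and bounding the $-\text{div}(\cdot)-\frac{n-4}2Q_4u$ part and the $Q_4^-$ part by their $L^1(D)$ norms then isolates a uniform bound $\frac{n-4}2\int_D\alpha_s'(w)Q_4^+[g]u^{\frac{n+4}{n-4}}dvol\le C$; since $\alpha_s'(w)\to1$ a.e.\ as $s\to\infty$, Fatou's lemma gives $Q_4^+[g]u^{\frac{n+4}{n-4}}\in L^1(D)$, hence $F\in L^1(D)$.

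With $F\in L^1(D)$ in hand the conclusion follows the template of Lemma~\ref{Lem:radon-measure}. From $\Delta\alpha_s(w)=-\alpha_s'(w)F+\alpha_s''(w)|\nabla w|^2$ one gets $\int_D|\Delta\alpha_s(w)|dvol\le\|F\|_{L^1(D)}+\int_D(-\alpha_s''(w))|\nabla w|^2dvol\le C$ uniformly in $s$, while $\alpha_s(w)\to w$ in $L^1(D)$; hence for $\phi\in C^\infty_c(D)$ we have $|\Delta w(\phi)|=\lim_s|\int_D\phi\,\Delta\alpha_s(w)\,dvol|\le C\|\phi\|_{C^0(D)}$, so $\Delta^2 u=-\Delta w$ is a Radon measure on $D$. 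For the sign, take $\phi\in C^\infty_c(D)$ with $\phi\ge0$ and pass to the limit in $\int_D(-\Delta\alpha_s(w))\phi\,dvol=\int_D\alpha_s'(w)F\phi\,dvol+\int_D(-\alpha_s''(w))|\nabla w|^2\phi\,dvol$: the first integral tends to $\int_DF\phi\,dvol$ by dominated convergence (dominated by $\|\phi\|_{C^0(D)}|F|$) and the second is nonnegative, so $(-\Delta w)(\phi)\ge\int_DF\phi\,dvol$. As the Radon measure $-\Delta w$ agrees with $F\,dvol$ on $D\setminus S$ and $|S|=0$, this says precisely that $\Delta^2 u|_S=(-\Delta w)|_S\ge0$.

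The main obstacle I anticipate is the $L^1(D)$ bound on $F$, and within it the integrability of $Q_4^+[g]u^{\frac{n+4}{n-4}}$ near $S$, which is not among the estimates recorded earlier and so must be wrung out by the above maximum-principle/Fatou argument applied to $w=-\Delta u$ itself; a secondary technical point is the passage from $\Delta u\in L^p(D)$ to enough regularity that $\text{div}(4A\nabla u-(n-2)J\nabla u)$ is an honest $L^1$ function rather than merely a distribution.
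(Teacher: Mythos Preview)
Your proposal is correct and follows essentially the same route as the paper: set $w=-\Delta u$, invoke Lemma~\ref{Lem:prelim-estimate} for the preliminary integrability, rewrite $-\Delta w$ via the $Q$-curvature equation, and then rerun the $\alpha_s$/Fatou machinery from Lemma~\ref{Lem:radon-measure} to absorb the two a~priori non-integrable terms $-\alpha_s''(w)|\nabla w|^2$ and $Q_4^+[g]u^{\frac{n+4}{n-4}}$. The paper's proof is terser---it simply notes which terms are already integrable and says ``the argument in the proof of Lemma~\ref{Lem:radon-measure} works from this point''---but the content is the same.

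Two small differences worth noting. First, you make explicit the step $u\in W^{2,p}_{\mathrm{loc}}(D)$ (from $\Delta u\in L^p(D)$ and $\Delta u|_S=0$) to see that $\text{div}(4A\nabla u-(n-2)J\nabla u)\in L^1$; the paper leaves this implicit in its appeal to Lemma~\ref{Lem:prelim-estimate}. Second, to pass to the limit in $(-\Delta\alpha_s(w))(\phi)$ you use $\alpha_s(w)\to w$ in $L^1(D)$ and integrate by parts twice, whereas the template of Lemma~\ref{Lem:radon-measure} goes through $\nabla w\in L^p$ and one integration by parts. Your variant is slightly cleaner here since $w\in L^p(D)$ is already in hand from Lemma~\ref{Lem:prelim-estimate}. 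Neither difference is substantive.
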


\begin{proof} Let $v = -\Delta u$. We will follow the proof of Lemma \ref{Lem:radon-measure} to show that $-\Delta v$ is a Radon
measure on $D$ using Lemma \ref{Lem:prelim-estimate}. 
We continue to use the notations from the proof of Lemma \ref{Lem:radon-measure}. We calculate
$$
- \Delta \alpha_s(v) =  \alpha'(v) (-\Delta v) - \alpha''(v) |\nabla v|^2,
$$ 
where, by the $Q$-curvature equation \eqref{Equ:Q-curvature-equation}, we have
$$
-\Delta v = - \text{div} (4A(\nabla u) -  (n-2)J\nabla u) - \frac {n-4}2 Q_4u + Q_4[g] u^\frac {n+4}{n-4} \text{ in $D\setminus S$}
$$
and
$$
-\Delta \alpha_s(v) = -\alpha_s''(v)|\nabla v|^2 + \alpha'(v)(
- \text{div} (4A(\nabla u) -  (n-2)J\nabla u) - \frac {n-4}2 Q_4u + Q_4[g] u^\frac {n+4}{n-4}) 
$$
in $D$. In the light of Lemma \ref{Lem:prelim-estimate}, terms in the right hand side of the above equation are all integrable 
except $-\alpha''(v)|\nabla v|^2 + Q_4^+[g]u^\frac {n+4}{n-4}$. Therefore the argument in the proof of 
Lemma \ref{Lem:radon-measure} works from this point and completes the proof.  
\end{proof}

We now are ready to state and prove our main results for $Q$-curvature equations in dimensions greater than $4$. For this,
we recall Theorem \ref{Thm:intr-main-thm-4} from the introduction.

\begin{theorem}\label{Thm:main-result-Q}
Let $(M^n, \bar g)$ be a compact Riemannian manifold for $n\geq 5$ and $S$ be a 
closed subset in $M^n$. And let $D$ be an open neighborhood of $S$. Suppose that $g = u^\frac 4{n-4} \bar g$ is a 
conformal metric on $D\setminus S$ with nonnegative scalar curvature $R[g]\geq 0$ and is geodesically complete 
near $S$. And suppose also that
$$
Q_4^-[g] \in L^\frac {2n}{n+4}(D\setminus S, g).
$$
Then 
$$
\text{dim}_{\mathscr{H}} (S) \leq \frac {n-4}2.
$$
\end{theorem}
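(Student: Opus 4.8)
The plan is to mimic the three-step proof of Theorem \ref{Thm:main-result-R}, but now treating the bi-Laplace as an iteration of $-\Delta$ with itself, so that two successive Green representations turn the equation into a Riesz potential estimate of order $4$. All the preparatory work is already in hand: by Lemma \ref{Lem:prelim-estimate} the function $v:=-\Delta u$ is positive near $S$ with $v\to+\infty$ as $x\to S$ and $v\in L^p(D)$ for every $p\in[1,\tfrac n{n-2})$, and by Lemma \ref{Lem:radon-measure-Q} the distribution $\nu:=\Delta^2 u=-\Delta v$ is a finite Radon measure on $D$ with $\nu|_S\geq 0$. The conclusion $\dim_{\mathscr{H}}(S)\leq\tfrac{n-4}2$ is then reached by comparison with geodesic completeness exactly as in Step~III of Theorem \ref{Thm:main-result-R}, with the exponent $n-2$ replaced by $n-4$.

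First I would reduce, as in Step~I of Theorem \ref{Thm:main-result-R} via the connected-sum construction and \cite{Gil}, to the case $R[\bar g]\leq -c_0<0$ on $D$: a conformal change of the background $\bar h=w^{4/(n-2)}\bar g$ with $w$ bounded above and below on $\bar D$ leaves $g$, hence $R[g]$ and $Q_4^-[g]$, unchanged and only replaces $u$ by a comparable function, so all hypotheses — and thus Lemma \ref{Lem:prelim-estimate} and Lemma \ref{Lem:radon-measure-Q} — are preserved. Next I would run two Green representations with the Green function $G(x,y)\asymp d(x,y)^{2-n}$ of $-\Delta[\bar g]$ on $D$ from \cite[Theorem 4.17]{Au}. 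Since $-\Delta v=\nu$ in $D$ and $v\in L^1(D)$, one gets $v=\int_D G(\cdot,y)\,d\nu(y)+h_2$ with $h_2$ harmonic, hence bounded near any interior point $p\in S$, so $v(x)\leq C\,\mathscr{R}^{2,D}_{\nu^+}(x)+C$ near $S$; feeding this into the second representation $u=\int_D G(\cdot,y)\,v(y)\,dy+h_1$ (with $h_1$ harmonic, bounded near $p$), and using $v\leq v^+$, $\int_D G(x,y)\,dy\leq C$, together with the Riesz composition estimate $\int_D d(x,y)^{2-n}d(y,z)^{2-n}\,dy\leq C\,d(x,z)^{4-n}$, yields
\begin{equation*}
u(x)\ \leq\ C\,\mathscr{R}^{4,D}_{\nu^+}(x)+C\qquad\text{for }x\text{ near }S .
\end{equation*}

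Finally I would argue by contradiction as in Step~III of Theorem \ref{Thm:main-result-R}. Assume $d:=\dim_{\mathscr{H}}(S)>\tfrac{n-4}2$; by Corollary \ref{Cor:prelim-dim} we have $d\leq n-4$, so I may fix $d'$ with $\tfrac{n-4}2<d'<\min\{d,\,n-4\}$, which makes $\alpha:=4$ admissible ($1<4<n$ for $n\geq 5$), keeps $d'<n-\alpha$, and ensures $\dim_{\mathscr{H}}(S)>d'$. Theorem \ref{Thm:main-estimate} applied to the finite nonnegative measure $\nu^+$ gives a point $p\in S$ and a set $E$ that is $4$-thin at $p$ with $\mathscr{R}^{4,D}_{\nu^+}(x)\leq C\,d(x,p)^{-(n-4-d')}$ for $x\in B_\delta(p)\setminus E$, and Theorem \ref{Thm:segment-euclidean} (with Lemma \ref{Lem:exp-T_p} to pass between the manifold and its tangent space) produces a geodesic ray $\gamma$ from $p$ avoiding $E$ in a small geodesic ball. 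Along $\gamma$ we then have $u\leq C\,d(\cdot,p)^{-(n-4-d')}$ (the additive constant being absorbed since $n-4-d'>0$), so parametrizing $\gamma$ by $\bar g$-arclength $s$ with $d(\gamma(s),p)\asymp s$,
\begin{equation*}
L(\gamma,g)=\int_0^{l_0}u(\gamma(s))^{\frac 2{n-4}}\,ds\ \leq\ C\int_0^{l_0}s^{-\frac{2(n-4-d')}{n-4}}\,ds\ <\ \infty ,
\end{equation*}
because $\tfrac{2(n-4-d')}{n-4}=2-\tfrac{2d'}{n-4}<1$ exactly when $d'>\tfrac{n-4}2$; this finite-length curve reaching $p\in S$ contradicts geodesic completeness of $g$ near $S$, and hence $\dim_{\mathscr{H}}(S)\leq\tfrac{n-4}2$.

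The step I expect to be the main obstacle is the reduction of the iterated-Laplace structure to a single order-$4$ Riesz potential: making the composition estimate $\int_D G(x,y)G(y,z)\,dy\lesssim d(x,z)^{4-n}$ rigorous on the manifold and checking the accompanying integrability of $v$, $h_1$, $h_2$ against the Green kernel. Here one relies on $G(x,y)\asymp d(x,y)^{2-n}$ sharing the homogeneity of the flat Newton kernel, on $4<n$ (so that two order-$2$ kernels compose to an order-$4$ kernel rather than a logarithm — which is exactly why $n\geq 5$ is needed), and on the boundedness of $D$ to control the off-diagonal contributions, while the measure- and $L^p$-properties at the bottom two levels are precisely what Lemma \ref{Lem:prelim-estimate} and Lemma \ref{Lem:radon-measure-Q} supply.
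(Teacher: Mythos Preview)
Your proposal is correct and follows essentially the same route as the paper: reduce to $R[\bar g]\leq -c_0<0$, use Lemmas \ref{Lem:prelim-estimate} and \ref{Lem:radon-measure-Q} to write $\Delta^2 u$ as a Radon measure, iterate the Green representation to bound $u$ by an order-$4$ Riesz potential via the composition estimate $\int_D G(x,z)G(z,y)\,dvol(z)\leq C\,d(x,y)^{4-n}$ (the paper cites \cite[Proposition 4.12]{Au} for this), and then finish by the contradiction argument of Step~III of Theorem \ref{Thm:main-result-R} with $\alpha=4$. Your explicit choice of $d'<\min\{d,n-4\}$ and your invocation of Corollary \ref{Cor:prelim-dim} to guarantee $d'<n-4$ are a bit more careful than the paper's write-up, but the arguments are otherwise the same.
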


\begin{proof} In the light of Step I in the proof of Theorem \ref{Thm:main-result-R}, we may assume the scalar curvature
$R \leq -c_0 <0$ for some $c_0$ without loss of any generality. Then we use Lemma \ref{Lem:prelim-estimate} and 
\ref{Lem:radon-measure-Q} and conclude that
$$
\Delta^2 u = \mu
$$
for a Radon measure $\mu$ on $D$. We use \cite[Theorem 4.7]{Au} first to write
$$
-\Delta u = \int_D G(x, y) d\mu + h(x) 
$$
for some harmonic function $h(x)$, where $G(x, y)$ is the Green function for $-\Delta$. Then we have
$$
u(x) = \int_D G(x, z) \int_D G(z, y)d\mu(y) dvol(z) + b(x)
$$
where $b(x)$ is bi-harmonic, where
$$
 \int_D G(x, z) \int_D G(z, y)d\mu(y) dvol(z)= \int_D (\int_D G(x, z)G(z, y)dvol(z))d\mu(y)
$$
and
$$
0 < \int_D G(x, z)G(z, y)dvol(z) \leq \frac C{d(x, y)^{n-4}} 
$$
for constant $C$ and $n\geq 5$ due to \cite[Proposition 4.12]{Au}, where \cite[Proposition 4.12]{Au} can be easily proven to be
available for bounded domains in Riemannian manifolds. Hence
$$
u(x) \leq  \mathscr{R}^{4, D}_{\mu^+} (x) + b(x).
$$
From now on, using the same argument of the proof of Theorem \ref{Thm:main-result-R}, 
based on Theorem \ref{Thm:main-estimate} for $\alpha = 4$ and $n\geq 5$, we conclude 
$$
\text{dim}{_\mathscr{H}} (S)\leq \frac {n-4}2
$$
and finish the proof.
\end{proof}

There have been a lot of  works on the study of singular solutions to $Q$-curvature equations on manifolds of dimensions greater
than $4$,  notably \cite{QR1, QR2, CHY, GMS}, for example. Theorem \ref{Thm:main-result-Q}, for instance, is an improvement of \cite[Theorem 1.2]{CHY} in terms of curvature conditions. And the approach here is different from \cite{CHY}.


\subsection{$Q$-curvature equations in dimension $4$} In this subsection we will study the $Q$-curvature
equation \eqref{Equ:Q-curvature-equation-4}. Our approach here in principle is similar to that in the previous subsection but
different in calculations and details. We will always 
assume that the scalar curvature of the conformal metric $g = e^{2u}\bar g$ is nonnegative. We will first derive
some preliminary estimates from the scalar curvature equation for $w=e^u$ and the 
$Q$-curvature equation \eqref{Equ:Q-curvature-equation-4} for $u$. Let us write the scalar curvature equation for $e^u$
\begin{equation}
-\Delta e^u = \frac 16 (-Re^u + R[g]e^{3u}) \text{ in $D\setminus S$}
\end{equation}
and consequently,
\begin{equation}\label{Equ:laplace-u-4}
-\Delta u = |\nabla u|^2 + \frac 16 (-R + R[g] e^{2u})  \text{ in $D\setminus S$}.
\end{equation}

\begin{lemma}\label{Lem:prelim-estimate-4}
Let $(M^4, \bar g)$ be a compact Riemannian manifold and $S$ be a 
closed subset in $M^n$. And let $D$ be an open neighborhood of $S$ where the scalar curvature $R \leq 0$.  
Suppose that $g = e^{2u} \bar g$ is a conformal metric on $D\setminus S$ with nonnegative scalar curvature $R[g]\geq 0$ 
and is geodesically complete near $S$. And suppose also that
$$
Q_4^-[g] \in L^1(D\setminus S, g).
$$
Then
\begin{equation}\label{Equ:prelim-estimate}
\aligned
& \text{as a Radon measure, } -\Delta u |_S = 0;\\
& \text{in fact, } \Delta  u \in L^p(D) \text{ for any $p\in [1, \frac 43)$}.
\endaligned
\end{equation}
\end{lemma}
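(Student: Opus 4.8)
The plan is to mirror the proof of Lemma~\ref{Lem:prelim-estimate}, replacing the power conformal factor by the exponential one and using that in dimension four the Paneitz operator carries no zeroth-order $Q_{4}$-term. Throughout put $w=e^{u}$, so that $g=w^{2}\bar g$ and $w$ solves the dimension-four scalar curvature equation $-\Delta w=\frac16(-Rw+R[g]w^{3})$ on $D\setminus S$. Since $R[g]\ge 0$ we have $R^{-}[g]\equiv 0$, so the hypotheses of Lemmas~\ref{Lem:lemma-3.1}, \ref{Lem:L^1-gradient}, \ref{Lem:radon-measure} and Proposition~\ref{Prop:newton-capacity} hold for $w$ (with the nonpositive background scalar curvature $R$). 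From these I would extract: $u=\log w\to+\infty$ as $x\to S$; $w\in L^{q}(D)$ for $q\in[1,2)$ and $\nabla w\in L^{p}(D)$ for $p\in[1,\frac43)$; $\text{Cap}_{2}(S,D)=0$, hence $|S|=0$. Since $w\ge c_{0}>0$ on $D\setminus S$ (continuous and positive off $S$, blowing up at $S$) and $\log$ is Lipschitz on $[c_{0},\infty)$, the chain rule gives $u\in W^{1,1}(D)$ with distributional gradient $\nabla u=\nabla w/w$; and $u^{\gamma}\in L^{1}(D)$ for every $\gamma\ge 1$, because $u^{\gamma}\le C_{\gamma,\epsilon}\,w^{\epsilon}$ for $u\ge 1$ and $w\in L^{\epsilon}(D)$ for $\epsilon<2$.

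Next I would record the key sign information. From \eqref{Equ:laplace-u-4}, $R\le 0\le R[g]$ gives
$$-\Delta u=|\nabla u|^{2}+\tfrac16(-R+R[g]e^{2u})=:f\ge 0\quad\text{on }D\setminus S,\qquad |\nabla u|^{2}\le f .$$
Applying the divergence theorem to $\alpha_{s}(u)$ (which, since $u\to+\infty$ at $S$, equals the constant $2s$ near $S$, hence is smooth on $D$) and using $-\alpha_{s}''\ge 0$, $\alpha_{s}'\ge 0$, one obtains $\int_{D}\alpha_{s}'(u)f\,dvol\le-\int_{\partial D}\frac{\partial u}{\partial\nu}\,d\sigma$, a bound independent of $s$; Fatou's lemma as $s\to\infty$ yields $f\in L^{1}(D\setminus S)$, and therefore $\nabla u\in L^{2}(D)$ since $|\nabla u|^{2}\le f$. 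Combined with $u\in L^{2}(D)$ this gives $u\in W^{1,2}(D)$, so $-\Delta u$ is a well-defined distribution on $D$ agreeing with the $L^{1}$ function $f$ on $D\setminus S$. As $\text{Cap}_{2}(S,D)=0$, the set $S$ is removable for $W^{1,2}$: for $\phi\in C^{\infty}_{c}(D)$ pick $\eta_{k}\in C^{\infty}_{c}(D\setminus S)$ with $0\le\eta_{k}\le 1$, $\eta_{k}\to 1$ on $\text{supp}\phi$, $\|\nabla\eta_{k}\|_{L^{2}(\text{supp}\phi)}\to 0$, and test against $\phi\eta_{k}$, killing the $\nabla\eta_{k}$-term by $\nabla u\in L^{2}$. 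Hence $-\Delta u=f\,dvol$ as distributions on $D$, a Radon measure with $-\Delta u|_{S}=0$ since $|S|=0$.

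It remains to upgrade $f\in L^{1}(D\setminus S)$ to $\Delta u\in L^{p}(D)$, $p\in[1,\frac43)$; for this I would use the $Q$-curvature equation \eqref{Equ:Q-curvature-equation-4}, which in dimension four reads $\Delta^{2}u+\text{div}(4A\nabla u-2J\nabla u)+Q_{4}[\bar g]=Q_{4}[g]e^{4u}$. Fix a large regular value $\alpha$ of $u$ with $\alpha>\sup_{\partial D}|u|$ and a cutoff $\eta\in C^{\infty}_{c}(\Sigma_{\alpha})$ identically $1$ near $S$, where $\Sigma_{t}=\{u>t\}$; for large regular values $\beta$ set $u_{\alpha,\beta}=\min(u-\alpha,\beta)$ on $\Sigma_{\alpha}$ and $\phi_{\alpha,\beta}=u_{\alpha,\beta}-\beta\eta$, exactly as in Lemma~\ref{Lem:prelim-estimate}, so $|\phi_{\alpha,\beta}|\le\beta$ and $\phi_{\alpha,\beta}$ vanishes on $\partial\Sigma_{\alpha}\cup\partial\Sigma_{\alpha+\beta}$. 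Multiplying the $Q$-equation by $\phi_{\alpha,\beta}$, integrating over $\Sigma_{\alpha}\setminus\Sigma_{\alpha+\beta}$ and integrating by parts (noting $\Delta\phi_{\alpha,\beta}=\Delta u-\beta\Delta\eta$, $\nabla\phi_{\alpha,\beta}=\nabla u-\beta\nabla\eta$ there), the leading term is $\int_{\Sigma_{\alpha}\setminus\Sigma_{\alpha+\beta}}|\Delta u|^{2}\,dvol$, and every remaining term is bounded by $C\int_{\Sigma_{\alpha}\setminus\Sigma_{\alpha+\beta}}|\nabla u|^{2}\,dvol+C(1+\beta)$: the $\nabla\eta$-pieces and $Q_{4}[\bar g]\phi_{\alpha,\beta}$ because $u,\nabla u,\Delta u$ are bounded on $\text{supp}\nabla\eta$, $u\in L^{1}(D)$, $|Q_{4}[\bar g]|$ bounded, $|D|<\infty$; the right side because $|\phi_{\alpha,\beta}|\le\beta$, $\int_{D}Q_{4}^{-}[g]e^{4u}\,dvol[\bar g]=\int_{D}Q_{4}^{-}[g]\,dvol[g]<\infty$, $\phi_{\alpha,\beta}\le 0$ wherever $\eta=1$ so the $Q_{4}^{+}[g]$-contribution there has the favorable sign, and on $\{\eta=0\}$ the analogue (obtained by testing the $Q$-equation against $1-\eta$) of \eqref{Equ:positive-part-integrable} applies; and the $\partial\Sigma_{\alpha+\beta}$-boundary integrals are controlled by their sign, as in Lemma~\ref{Lem:prelim-estimate}. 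Using $|\nabla u|^{2}\le-\Delta u=|\Delta u|$ to absorb, $\int|\nabla u|^{2}\le\int|\Delta u|\le|D|^{1/2}(\int|\Delta u|^{2})^{1/2}\le\epsilon\int|\Delta u|^{2}+C_{\epsilon}$, one arrives at $\int_{\Sigma_{\alpha}\setminus\Sigma_{\alpha+\beta}}|\Delta u|^{2}\,dvol\le C(1+\beta)$. Taking $\beta=2^{i}$ with $\alpha$ fixed gives $\int_{\Sigma_{2^{i-1}}\setminus\Sigma_{2^{i}}}|\Delta u|^{2}\le C2^{i}$ for $i$ large, hence $\int_{\Sigma_{2^{i-1}}\setminus\Sigma_{2^{i}}}\frac{|\Delta u|^{2}}{u}\le 2C$, and so $\int_{\Sigma_{2^{i_{0}}}\setminus S}\frac{|\Delta u|^{2}}{u^{1+s}}\,dvol\le C\sum_{i>i_{0}}2^{-si}<\infty$ for every $s>0$. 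Hölder's inequality with $u^{(1+s)p/(2-p)}\in L^{1}(D)$ then gives $\int_{D\setminus S}|\Delta u|^{p}\,dvol<\infty$ for every $p\in[1,\frac43)$ (the integral over $\bar D\setminus\Sigma_{2^{i_{0}}}$ being finite since $\Delta u$ is bounded there), and with the previous paragraph this is $\Delta u\in L^{p}(D)$.

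The main obstacle is the integration-by-parts bookkeeping in the third paragraph: the dimension-four Paneitz identity produces several $\partial\Sigma_{\alpha+\beta}$-boundary integrals and lower-order bulk terms, and the crux is to verify that each is genuinely $O(\beta)$ rather than $O(\beta)$ times a quantity blowing up with $\beta$; this rests on $|\phi_{\alpha,\beta}|\le\beta$, on $u\in L^{1}(D)$, on the finiteness of $\int_{D}Q_{4}^{-}[g]\,dvol[g]$, and on the favorable signs of the $\partial\Sigma_{\alpha+\beta}$-boundary terms and of the $Q_{4}^{+}[g]$-term on $\{\eta=1\}$. In dimension four this is, if anything, lighter than in Lemma~\ref{Lem:prelim-estimate}, since the elementary inequality $|\nabla u|^{2}\le-\Delta u$ (immediate from $R\le 0\le R[g]$ and \eqref{Equ:laplace-u-4}) replaces the more delicate relation between $\frac{|\nabla u|^{4}}{u^{2}}$ and $|\Delta u|^{2}$ used there.
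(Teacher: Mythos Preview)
Your proposal is correct and follows essentially the same route as the paper: deduce preliminaries from the scalar curvature equation for $w=e^{u}$, test the $Q$-curvature equation against $1-\eta$ and then $\phi_{\alpha,\beta}$ to obtain $\int_{\Sigma_\alpha\setminus\Sigma_{\alpha+\beta}}|\Delta u|^{2}\le C\beta$, and finish with the dyadic/H\"older argument. Two sub-steps differ in detail but not in spirit. First, for $-\Delta u|_{S}=0$ you use a removability argument (from $\nabla u\in L^{2}(D)$ and $\text{Cap}_{2}(S,D)=0$) in place of the paper's reference to the computation in Lemma~\ref{Lem:prelim-estimate}; your version is clean and self-contained. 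Second, to absorb $\int|\nabla u|^{2}$ you invoke $|\nabla u|^{2}\le -\Delta u$ and Cauchy--Schwarz, whereas the paper squares this to $|\nabla u|^{4}\le |\Delta u|^{2}$ and uses Young's inequality; both work, and your observation that all moments $u^{\gamma}$ are integrable (via $u^{\gamma}\le C e^{\epsilon u}$) gives slightly more room than the paper's $u\in L^{p}$, $p<2$, though the final constraint $p<4/3$ is the same. The handling of the $Q_{4}^{+}[g]$-term is also equivalent: your sign observation $\phi_{\alpha,\beta}\le 0$ on $\{\eta=1\}$ is the pointwise form of the inequality $\phi_{\alpha,\beta}\le\beta(1-\eta)$ that the paper uses together with the $(1-\eta)$-tested bound.
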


\begin{proof} First, by Lemma \ref{Lem:lemma-3.1} for $e^u$, we have
$$
u(x) \to \infty \text{ as $x\to S$}.
$$
Then, by the proof of Lemma \ref{Lem:radon-measure} and \eqref{Equ:laplace-u-4}, we know that
\begin{itemize}
\item $-\Delta u$ is a Radon measure on $D$;
\item $\nabla u\in L^p(D)$ for any $p\in [1, \frac 43)$ and $u\in L^p(D)$ for any $p\in [1, 2)$; 
\item $|\nabla u|^2 + \frac 16 (- R + R[g] e^{2u})\in L^1(D)$.
\end{itemize}
The same argument as in the proof of Lemma \ref{Lem:prelim-estimate} we can prove that $-\Delta u|_S = 0$ as a Radon measure.
Also, for the $L^p$ estimate, following the proof of Lemma \ref{Lem:prelim-estimate}, we multiple $1-\eta$ to both sides of 
\eqref{Equ:Q-curvature-equation-4} and get
$$
\int_D (1-\eta)Q_4^+e^{4u} dvol \leq \int_D Q_4^-[g] dvol[g] + C.
$$
Next we multiple $\phi_{\alpha, \beta}$ to both sides of 
\eqref{Equ:Q-curvature-equation-4} and integrate
$$
\aligned
\int_{\Sigma_\alpha\setminus\Sigma_{\alpha+\beta}} & \Delta u\Delta \phi_{\alpha, \beta} dvol   - \int_{\partial \Sigma_\alpha} \Delta u \frac{\partial \phi_{\alpha, \beta}}
{\partial \nu} d\sigma  - \int_{\partial \Sigma_{\alpha+\beta}} \Delta u \frac{\partial \phi_{\alpha, \beta}}
{\partial \nu} d\sigma  \\
&  - \int_{\Sigma_\alpha\setminus\Sigma_{\alpha+\beta}}
(4A(\nabla u, \nabla\phi_{\alpha}) - 2 J\nabla u\cdot\nabla\phi_{\alpha, \beta} dvol\\
& + \int_{\Sigma_\alpha\setminus\Sigma_{\alpha+\beta}}  Q_4 \phi_{\alpha, \beta}dvol 
= \int_{\Sigma_\alpha\setminus\Sigma_{\alpha+\beta}} Q_4[g]e^{4u} \phi_{\alpha, \beta}dvol
\endaligned
$$
and, again, the boundary term at $\partial \Sigma_{\alpha+\beta}$ is with the sign in our favor, thanks 
to \eqref{Equ:laplace-u-4} and $\frac {\partial u}{\partial \nu}|_{\Sigma_{\alpha+\beta}} = |\nabla u|$ for the outward 
normal $\nu$ of $\Sigma_\alpha\setminus\Sigma_{\alpha+\beta}$. Similar to the estimates in the proof of 
Lemma \ref{Lem:prelim-estimate}, we get
$$
\int_{\Sigma_\alpha\setminus\Sigma_{\alpha+\beta}} |\Delta u|^2 dvol \leq 
C \int_{\Sigma_\alpha\setminus\Sigma_{\alpha+\beta}} |\nabla u|^2 dvol + C\beta.
$$
And we handle $\int_{\Sigma_\alpha\setminus\Sigma_{\alpha+\beta}} |\nabla u|^2 dvol$ similarly as before
$$
\int_{\Sigma_\alpha\setminus\Sigma_{\alpha+\beta}} |\nabla u|^2 dvol
\leq \frac 1{2C} \int_{\Sigma_\alpha\setminus\Sigma_{\alpha+\beta}} |\nabla u|^4 dvol
+ C \leq \frac 1{2C} \int_{\Sigma_\alpha\setminus\Sigma_{\alpha+\beta}} |\Delta u|^2 dvol +C.
$$
due to \eqref{Equ:laplace-u-4}. Therefore 
\begin{equation}\label{Equ:lead-to-dim-4}
\int_{\Sigma_\alpha\setminus\Sigma_{\alpha+\beta}} |\Delta u|^2 dvol \leq C\beta.
\end{equation}
Now, using the same idea as in the proof of Lemma \ref{Lem:prelim-estimate},  we rewrite \eqref{Equ:lead-to-dim-4} as
$$
\int_{\Sigma_{2^{i-1}}\setminus\Sigma_{2^{i}}} \frac {|\Delta u|^2}u dvol \leq C
$$
and, for $s>0$ appropriately small for any $p\in [1, \frac 43)$, we derive
$$
\int_{D\setminus S} \frac {|\Delta u|^2}{u^{1+s}} dvol \leq C,
$$
which implies
$$
\int_{D\setminus S} |\Delta u|^p dvol \leq (\int_{D\setminus S} \frac {|\Delta u|^2}{u^{1+s}} dvol)^\frac p2
(\int_{D\setminus S}u^{\frac {(1+s)p}{2-p}} dvol)^{1 - \frac p2}
$$
when
$$
\frac {(1+s)p}{2-p} < 2.
$$
\end{proof}

\begin{corollary}\label{Cor:prelim-dim-4} Under the assumptions as in Lemma \ref{Lem:prelim-estimate-4}, we know
the singular set $S$ is of zero Hausdorff dimension.
\end{corollary}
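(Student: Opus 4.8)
The plan is to transcribe the argument of Corollary \ref{Cor:prelim-dim} to the critical dimension $n=4$, where the bound $\text{dim}_{\mathscr{H}}(S)\leq n-4$ of that corollary reads $\leq 0$; the task is in fact slightly easier here because the consequence \eqref{Equ:laplace-u-4} of the scalar curvature equation carries no factor $1/u$. Since the hypotheses of Lemma \ref{Lem:prelim-estimate-4} already include $R\leq 0$, no conformal reduction is needed. I would keep the sublevel sets $\Sigma_\alpha=\{u>\alpha\}$, $\Sigma_{\alpha+\beta}=\{u>\alpha+\beta\}$ and the truncation $u_{\alpha,\beta}$ from the proofs of Proposition \ref{Prop:newton-capacity} and Lemma \ref{Lem:prelim-estimate-4}, recalling that $u\to+\infty$ near $S$, so that for every $\beta$ the set $\Sigma_{\alpha+\beta}$ is a neighborhood of $S$, while $\Sigma_\alpha$ is compactly contained in $D$.

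First I would quote the energy bound $\int_{\Sigma_\alpha\setminus\Sigma_{\alpha+\beta}}|\Delta u|^2\,dvol\leq C\beta$ established as \eqref{Equ:lead-to-dim-4} in the proof of Lemma \ref{Lem:prelim-estimate-4}, with $C$ independent of $\beta$. Combined with the pointwise inequality $|\nabla u|^4\leq(\Delta u)^2$ on $D\setminus S$ --- which follows from \eqref{Equ:laplace-u-4} because $R\leq 0$ and $R[g]\geq 0$ force $-\Delta u\geq|\nabla u|^2\geq 0$, and was already used implicitly in the proof of Lemma \ref{Lem:prelim-estimate-4} --- this yields, for the competitor $u_{\alpha,\beta}/\beta$ (which equals $1$ on a neighborhood of $S$, vanishes outside $\Sigma_\alpha$, and for each fixed $\beta$ lies in $W^{1,4}_0(D)$ with gradient supported in $\Sigma_\alpha\setminus\Sigma_{\alpha+\beta}$),
$$\int_D\Big|\nabla\frac{u_{\alpha,\beta}}{\beta}\Big|^4\,dvol=\frac1{\beta^4}\int_{\Sigma_\alpha\setminus\Sigma_{\alpha+\beta}}|\nabla u|^4\,dvol\leq\frac1{\beta^4}\int_{\Sigma_\alpha\setminus\Sigma_{\alpha+\beta}}|\Delta u|^2\,dvol\leq\frac{C}{\beta^3}\to 0$$
as $\beta\to\infty$. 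Since $u_{\alpha,\beta}/\beta$ is an admissible test function for the $4$-capacity of $S$ in $D$, this forces $\text{Cap}_4(S,D)=0$, exactly as in the proofs of Proposition \ref{Prop:newton-capacity} and Corollary \ref{Cor:prelim-dim}.

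To conclude I would invoke the capacity--Hausdorff-dimension comparison at the critical exponent $p=n=4$: a compact subset of a $4$-manifold with vanishing $4$-capacity has Hausdorff dimension $0$ (the $p=n$ endpoint of the inclusions used for Proposition \ref{Prop:newton-capacity} and Corollary \ref{Cor:prelim-dim}; cf. \cite{AM} and \cite[Theorem 2.10 in Chapter VI]{SYb}), whence $\text{dim}_{\mathscr{H}}(S)=0$. I do not expect a genuine obstacle here; the only points needing a word of care are that this critical endpoint of the comparison really holds --- note that a set of zero $n$-capacity may still be infinite, e.g.\ countable, which is precisely why the conclusion is only ``zero Hausdorff dimension'' rather than ``finitely many points'', the stronger statement requiring the full potential-theoretic machinery of Section \ref{Sec:potential-capacity} --- and that $\Sigma_\alpha$ is compactly contained in $D$ so that $u_{\alpha,\beta}/\beta$ is genuinely admissible; both are routine and handled exactly as in Proposition \ref{Prop:newton-capacity} and Corollary \ref{Cor:prelim-dim}.
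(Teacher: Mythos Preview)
Your proposal is correct and follows essentially the same argument as the paper: combine the pointwise bound $|\nabla u|^4\leq(\Delta u)^2$ from \eqref{Equ:laplace-u-4} with the energy estimate \eqref{Equ:lead-to-dim-4} to obtain $\int|\nabla(u_{\alpha,\beta}/\beta)|^4\,dvol\leq C\beta^{-3}$, deduce $\text{Cap}_4(S,D)=0$, and invoke the capacity--Hausdorff comparison. Your additional remarks on why only zero Hausdorff dimension (not finiteness) follows at this stage are accurate and well placed.
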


\begin{proof} From \eqref{Equ:laplace-u-4} and \eqref{Equ:lead-to-dim-4} in the above we have
$$
\int_{\Sigma_\alpha\setminus\Sigma_{\alpha+\beta}} |\nabla \frac {u_{\alpha, \beta}}\beta|^4 dvol \leq C\beta^{-3}
$$
for some $\alpha$ appropriately large and $\beta\to\infty$, which leads to $\text{Cap}_4(S, D) = 0$ and 
completes the proof as in  Proposition \ref{Prop:newton-capacity} (cf. \cite{AM} and \cite[Theorem 2.10 in Chapter VI]{SYb}).
\end{proof}

What follows is to go beyond that $S$ is of zero Hausdorff dimension.  
We now are ready to state and prove our main result on the finiteness of singularities for the $Q$-curvature equation in 
dimension $4$. This is inspired by \cite{Cv, Hu, AH, MQ-a, MQ-g}. We recall Theorem \ref{Thm:intr-main-thm-5} from the 
introduction.

\begin{theorem}\label{Thm:main-result-Q-4}
Let $(M^4, \bar g)$ be a compact Riemannian manifold and $S$ be a 
closed subset in $M^n$. And let $D$ be an open neighborhood of $S$. Suppose that $g= e^{2u}\bar g$ is a conformal metric on $D\setminus S$ with nonnegative scalar curvature $R[g]\geq 0$ and is geodesically complete near $S$. And suppose that
$$
\int_D Q_4^-[g]dvol[g] < \infty.
$$
Then $S$ consists of only finitely many points.
\end{theorem}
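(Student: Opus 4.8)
The plan is to carry out the dimension-$4$ argument in close parallel with the proof of Theorem~\ref{Thm:main-result-Q}, replacing the iterated Riesz kernel $d(x,y)^{-(n-4)}$ by its degenerate logarithmic counterpart and replacing Theorem~\ref{Thm:main-estimate} by the Log potential estimate of Theorem~\ref{Thm:n-potential}. First, exactly as in Step~I of the proof of Theorem~\ref{Thm:main-result-R}, a conformal change of the background metric that leaves the conformal structure on $D$ unchanged lets us assume $R[\bar g]\le 0$ in $D$; under this hypothesis Lemma~\ref{Lem:prelim-estimate-4} and Corollary~\ref{Cor:prelim-dim-4} apply, so from now on $u(x)\to\infty$ as $x\to S$, $-\Delta u$ is a Radon measure on $D$ with $(-\Delta u)|_S=0$, $\nabla u,\,\Delta u\in L^p(D)$ for every $p\in[1,\tfrac43)$, and $\dim_{\mathscr{H}}S=0$.

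Next I would establish the dimension-$4$ analogue of Lemma~\ref{Lem:radon-measure-Q}: writing $v=-\Delta u$ and using the Paneitz equation \eqref{Equ:Q-curvature-equation-4} in the form $-\Delta v=\Delta^2u=-\operatorname{div}(4A\nabla u-2J\nabla u)-Q_4[\bar g]+Q_4[g]e^{4u}$, one shows that $\Delta^2u$ is a locally finite Radon measure $\mu$ on $D$ with $\mu|_S\ge 0$. The mechanism is that of Lemma~\ref{Lem:radon-measure-Q}: one computes $-\Delta\alpha_s(v)=-\alpha_s''(v)|\nabla v|^2+\alpha_s'(v)(-\Delta v)$, uses $-\alpha_s''(v)|\nabla v|^2\ge 0$, controls the positive term $Q_4^+[g]e^{4u}$ near $S$ by testing \eqref{Equ:Q-curvature-equation-4} against $1-\eta$ as in the proof of Lemma~\ref{Lem:prelim-estimate-4}, and absorbs the first-order term $\operatorname{div}(4A\nabla u-2J\nabla u)$ --- the divergence of an $L^p$ vector field with $p<\tfrac43$ --- by integration by parts together with the cut-off estimates already in hand. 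Only local finiteness of $\mu^+$ will actually be used below.

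With $\Delta^2u=\mu$ in hand, I would iterate the Green function $G(x,y)$ of $-\Delta$ on $D$, after shrinking $D$ to a bounded neighborhood of the compact set $S$ on which $\mu^+$ has finite total mass. Writing $-\Delta u=\int_DG(x,y)\,d\mu(y)+h_1(x)$ with $h_1$ harmonic, one gets $u(x)=\int_D\bigl(\int_DG(x,z)G(z,y)\,dvol(z)\bigr)\,d\mu(y)+b(x)$ with $b$ biharmonic, and one invokes the dimension-$4$ kernel bound $0<\int_DG(x,z)G(z,y)\,dvol(z)\le C\log\frac1{d(x,y)}+C$ for $x,y$ near $S$ --- the $n=4$ counterpart of \cite[Proposition 4.12]{Au}, reflecting that $I_2*I_2$ is logarithmic in dimension $4$. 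Since $G$ and $\int_DG(x,\cdot)G(\cdot,y)\,dvol$ are nonnegative and $b$ is smooth near the interior point $p$, this yields $u(x)\le C\,\mathscr{R}^{n,D}_{\mu^+}(x)+C$ near $S$; that is, up to a bounded error $u$ is dominated by the Log potential of the finite measure $\mu^+$.

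Finally, for a prospective singular point $p\in S$, Theorem~\ref{Thm:n-potential} applied to $\mu^+$ produces an $n$-thin set $A$ at $p$ with $\mathscr{R}^{n,D}_{\mu^+}(x)=(\mu^+(\{p\})+o(1))\log\frac1{d(x,p)}$ as $x\to p$ off $A$. By Theorem~\ref{Thm:segment-euclidean} (transported to the manifold via Lemma~\ref{Lem:exp-T_p}) the directions whose rays meet $A$ near $p$ do not exhaust the unit sphere in $T_pM^4$, while the directions whose rays meet $S$ near $p$ form a set of measure zero there since $\dim_{\mathscr{H}}S=0$; hence there is a geodesic ray $\gamma$ from $p$, lying in $D\setminus S$ for small parameter, that avoids $A$ near $p$. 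Parametrizing $\gamma$ by $\bar g$-arclength we obtain $u(\gamma(s))\le(C\mu^+(\{p\})+o(1))\log\frac1s+O(1)$, so the $g$-length of $\gamma$ near $p$ is at most a constant multiple of $\int_0^{\varepsilon_0}s^{-C\mu^+(\{p\})-o(1)}\,ds$, which is finite when $\mu^+(\{p\})<1/C$ --- contradicting geodesic completeness of $g$ near $S$. Therefore $\mu^+(\{p\})\ge c_0:=1/C>0$ for every $p\in S$; since $\mu^+$ is locally finite and $S$ is compact, $\mu^+$ gives finite mass to a neighborhood of $S$, and as the points of $S$ are disjoint atoms of $\mu^+$ each of mass at least $c_0$, the set $S$ is finite. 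The main obstacle is the second step --- showing that $\Delta^2u$ is a genuine locally finite Radon measure in dimension $4$, which requires simultaneously taming the first-order term of the Paneitz operator and the positive $Q$-curvature contribution near $S$; once that and the logarithmic iterated-Green bound are secured, the potential-theoretic input and the completeness argument go through as in the case $n\ge 5$.
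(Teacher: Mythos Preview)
Your proposal follows the same strategy as the paper's proof and is essentially correct. One technical point needs adjustment: you set $v=-\Delta u$ and invoke the mechanism of Lemma~\ref{Lem:radon-measure-Q}, but that $\alpha_s$-truncation argument requires $v(x)\to+\infty$ as $x\to S$, so that $\alpha_s(v)$ is constant (hence smooth) in a neighborhood of $S$. In dimensions $n\ge 5$ this is supplied by Lemma~\ref{Lem:prelim-estimate}, but Lemma~\ref{Lem:prelim-estimate-4} contains no such statement in dimension $4$; from \eqref{Equ:laplace-u-4} with $R\le 0$ and $R[g]\ge 0$ one only obtains $-\Delta u\ge 0$, not $-\Delta u\to+\infty$. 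The paper therefore takes $v=-\Delta u+u$, which does tend to $+\infty$ since $u\to+\infty$ and $-\Delta u\ge 0$; then $-\Delta v=\Delta^2u-\Delta u$, and because $-\Delta u|_S=0$ and $\Delta u\in L^p(D)$ by Lemma~\ref{Lem:prelim-estimate-4}, one still concludes that $\Delta^2u$ is a Radon measure $\mu$ on $D$ with $\mu|_S\ge 0$. With this small modification the remainder of your argument---the iterated Green representation, the logarithmic bound on $\int_D G(x,z)G(z,y)\,dvol(z)$, the application of Theorem~\ref{Thm:n-potential} and Theorem~\ref{Thm:segment-euclidean}, and the completeness contradiction forcing $\mu(\{p\})\ge 1/C$---matches the paper's proof. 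Your explicit remark that the geodesic ray must also avoid $S$ (handled via $\dim_{\mathscr H}S=0$) is a point the paper leaves implicit.
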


\begin{proof} As before, we use the argument in Step I on Theorem \ref{Thm:main-result-R} to assume that the scalar curvature
of the background metric $\bar g$ is less than a negative number, i.e. $R\leq -c_0<0$, without loss of any generality for our
purpose. Let 
$$
v = -\Delta u + u
$$
and claim $-\Delta v$ is a Radon measure on $D$ with $-\Delta v|_S \geq 0$. Let us start with
\begin{equation}\label{Equ:laplace-v}
-\Delta v = \Delta^2 u - \Delta u = - \text{div}(4A(\nabla u) -2J\nabla u) - Q_4 + Q_4[g]e^{4u} - \Delta u.
\end{equation}
By Lemma \ref{Lem:prelim-estimate-4} and \eqref{Equ:laplace-u-4}, we know 
\begin{itemize}
\item $v(x) \to\infty$  as  $x\to S$;
\item All terms in the right side of \eqref{Equ:laplace-v} except $Q_4^+[g]e^{4u}$ is integrable.
\end{itemize}
Therefore, folowing the same argument as in the proof of Lemma \ref{Lem:radon-measure-Q}, the claim is proven. 
Obviously, the same conclusion holds for $\Delta^2 u = -\Delta v + \Delta u$ from $-\Delta v$ and what we know about 
$\Delta u$ in Lemma \ref{Lem:prelim-estimate-4}. Thus we let
$$
\Delta^2 u = \mu
$$
for a Radon measure on $D$ with $\Delta^2 u|_S \geq 0$.  Like in the proof of Theorem \ref{Thm:main-result-Q}, we first write
$$
-\Delta u (x) = \int_D G(x, y)d\mu(y) + h(x)
$$
by \cite[Theorem 4.17]{Au}, where $h(x)$ is a harmonic function. Then we write
$$
u(x) = \int_D G(x, z)\int_D G(z, y)d\mu(y) dvol(z) + b(x) 
$$
where $b(x)$ is a bi-harmonic function and,  due to \cite[Proposition 4.12]{Au}, 
$$
\int_D G(x, z)G(z, y) dvol(z) \leq C(1 + \log \frac 1{d(x, y)})
$$
for some constant $C$ in dimension $4$, where \cite[Proposition 4.12]{Au} can be easily made available on bounded domains in 
manifolds. Therefore 
$$
u(x) \leq C\mathscr{R}^{4, D}_{\mu^+} (x) + b(x).
$$
Applying Theorem \ref{Thm:n-potential}, we have 
$$
\lim_{x\to p \text{ and } x\notin E} \frac {u(x)}{\log\frac 1{d(x, p)}} \leq C\mu^+(\{p\}) = C\mu(\{p\})
$$
where $E$ is a subset that is $n$-thin at $p$. Next, in the light of Theorem \ref{Thm:segment-euclidean}, we conclude
that $\mu(\{p\}) \geq 1/C$ for each $p\in S$ by the completeness of the metric $g$ near $S$, 
which indeed implies that $S$ can only have finitely many points. So the proof is complete.
\end{proof}

Theorem \ref{Thm:main-result-Q-4} is a significant improvement of \cite[Theorem 2]{CQY} (please see also \cite{CH, CQY-d, MQ-g}). 

\vskip 0.3cm
\noindent$\mbox{}^\dag$ Department of Mathematics, Nankai University, Tianjin, China; \\e-mail: 
msgdyx8741@nankai.edu.cn 
\vspace{0.2cm}

\noindent $\mbox{}^\ddag$ Department of Mathematics, University of California, Santa Cruz, CA 95064; \\
e-mail: qing@ucsc.edu


\begin{thebibliography}{99}
\bibliographystyle{plain}

\footnotesize
\bibitem{AH96} D. R. Adams and L. I. Hedberg, {\it Function spaces and potential theory}, Springer-Verlag, Berlin Heidelberg, 1996.

\bibitem{AM} D. Adams and N. G. Meyers, {\it Thinness and Wiener criteria for non-linear potentials}, Indiana Univ. Math. J. 
{\bf 22} (1972), 169 - 197.




\bibitem{AG} D. Armitage and S. Gariner, {\it Classic Potential Theory}, Springer Monographs in Mathematics, 
Springer-Verlag, London, 2001. 


\bibitem{AH} M. Arsove and A. Huber, {\it Local behavior of subharmonic functions}, Indiana Univ. Math. J. {\bf 22} (1973), 
1191 - 1199.

\bibitem{Au} T. Aubin, {\it Nonlinear Analysis on Manifolds. - Monge-Amp\`{e}re Equations}, Springer-Verlag, New York, 
Heidelberg, Berlin, 1982 



\bibitem{B-V} M. F. Bidaut-Veron, {\it Local and global behavior of solutions of quasi- linear equations of emden-fowler type},
Archive for Rational Mechanics and Analysis, {\bf 107} (4) (1989), 293 - 324.


\bibitem{BMQ-s} V. Bonini, S. Ma, and J. Qing, {\it On non-negatively curved hypersurfaces in hyperbolic space}, 
Math. Ann. {\bf 372} (2018), no. 3-4, 1103 - 1120. 
 
\bibitem{BMQ-r} V. Bonini, S. Ma, and J. Qing, {\it Hypersurfaces with nonnegative Ricci curvature in hyperbolic space}, 
Calc. Var. Partial Differential Equations {\bf 58} (2019), no. 1, Paper No. 36, 14 pp. 




\bibitem{Brom} T. Bromwich, {\it Introduction to the theory of infinite series}, The Macmillan, New York, 1908.

\bibitem{Ca} G. Carron, {\it Inégalité de Sobolev et volume asymptotique. (French) [Sobolev inequality and asymptotic volume]},
Ann. Fac. Sci. Toulouse Math. (6) {\bf 21} (2012), no. 1, 151 - 172.

\bibitem{CH} G. Carron and M. Herzlich, {\it The Huber theorem for non-compact conformally flat manifolds}, 
Comment. Math. Helv. {\bf 77} (2002), 192 - 220.



\bibitem{CHY} S-Y. A. Chang, F. Hang, and P. Yang, {\it On a class of locally conformally flat manifolds}, 
IMRN {\bf 4} (2004), 185 - 209. 


\bibitem{CQY-d} S-Y. A. Chang, J. Qing, and P. Yang, {\it On the Chern-Gauss-Bonnet integral for conformal metrics on 
$R^4$}, Duke Math. J. {\bf 103} (2000), no. 3, 523 - 544.

\bibitem{CQY} S-Y. A. Chang, J. Qing, and P. Yang, {\it Compactification of a class of conformally flat 4-manifold}, 
Invent. Math.  {\bf 142} (2000), 65 - 93.


\bibitem{Cv} S. Cohn-Vossen, {\it K\"{u}rzeste Wege und Totalkrummung auf Fl\"{a}chen}, Compositio Math. {\bf 2} (1935), 69 - 133.



\bibitem{DHM} G. Dolzmann, N. Hungerb\"{u}hler and S. M\"{u}ller, {\it Non-linear elliptic systems with measure-valued right hand side}, 
Math. Z. {\bf 226} (1997), 545 - 574.

\bibitem{Rey} P. du Bois-Reymond, {\it Eine neue Theorie der Convergenz und Divergenz von Reihen mit positiven 
Gliedern}, Journal für die reine und angewandte Mathematik {\bf 76} (1873), 61 - 91.








\bibitem{Gil} O. Gil-Medrano, {\it Connected sums and the infimum of the Yamabe functional}, 
Differential geometry, Peñíscola 1985, 160 - 167, Lecture Notes in Math., {\bf 1209}, Springer, Berlin, 1986. 


\bibitem{GMS} M. Gonz\'{a}lez, R. Mazzeo, and Y. Sire, {\it Singular solutions of fractional order conformal Laplacians}, 
J. Geom. Anal. {\bf 22} (2012), no. 3, 845 - 863. 



\bibitem{HK} W. K. Hayman and P. B. Kennedy, {\it Subharmonic functions}, vol. 1, Academic Press, London, New York, San Francisco, 
1976.






\bibitem{Hu} A. Huber, {\it On subharmonic functions and differential geometry in the large},
Comment. Math. Helv. {\bf 32} (1957), 13 - 72.








\bibitem{Kp} B. Kpata, {\it On a decomposition of non-negative Radon measures}. Arch. Math. (Brno) {\bf 55} 
(2019), no. 4, 203 - 210. 


\bibitem{MQ-a} S. Ma and J. Qing, {\it On n-superharmonic functions and some geometric applications}, 
Calc. Var. Partial Differential Equations {\bf 60} (2021), no. 6, Paper No. 234, 42 pp.

\bibitem{MQ-g} S. Ma and J. Qing,  {\it On Huber-type theorems in general dimensions},
 Adv. Math. {\bf 395} (2022), Paper No. 108145, 37 pp. 
 
 \bibitem{MP} R. Mazzeo and F. Pacard, {\it A construction of singular solutions for s semilinear elliptic equation
 using asymptotic analysis}, J. Diff. Geom. {\bf 44} (1996) 331 - 370.
 
 \bibitem{MS} R. Mazzeo and N. Smale, {\it Conformally flat metrics of positive constant scalar on subdomains 
 of the sphere}, J. Diff. Geom. {\bf 34} (1991) 581 - 621.
 
 
 \bibitem{Mi} Y. Mizuta, {\it Potential theory in Euclidean spaces}, GAKUTO International Series. 
 Mathematical Sciences and Applications, {\bf 6}. Gakkōtosho Co., Ltd., Tokyo, 1996. viii+341 pp. 
 ISBN: 4-7625-0415-7 





\bibitem{QR1} J. Qing and D. Raske, {\it Compactness for conformal metrics with constant Q curvature on locally conformally 
flat manifolds}, Calc. Var. Partial Differential Equations {\bf 26} (2006), no. 3, 343 - 356.

\bibitem{QR2} J. Qing and D. Raske, {\it On positive solutions to semilinear conformally invariant equations on locally 
conformally flat manifolds}, Int. Math. Res. Not. 2006, Art. ID 94172, 20 pp.





\bibitem{RH} H.L. Royden and P.M. Fitzpatrick, {\it Real Analysis}, 4th Ed, Prentice Hall 1988.

\bibitem{Ru} W. Rudin, {\it Real And Complex Analysis}, 3rd Ed, WCB/McGraw-Hill, 1987.

\bibitem{Sc} R. Schoen, {\it The existence of weak solutions with prescribed singular behavior for a conformally invariant 
scalar equation}, Comm. Pure Appl. Math. {\bf 41} (1988), no. 3, 317 - 392

\bibitem{SY}  R. Schoen and S. T. Yau, {\it Conformally flat manifolds, Kleinian groups and scalar curvature}, 
Invent. Math. {\bf 92} (1988), 47 - 71. 

\bibitem{SYb} R. Schoen and S. T. Yau, {\it Lectures on Differential Geometry}, Conference Proceedings and Lecture Notes in 
Geometry and Topology, volume {\bf 1}, International Press 1994.









\end{thebibliography}
\end{document}